\theoremstyle{plain}
\newtheorem{thm}{Theorem}[section]
\newtheorem{lem}[thm]{Lemma}
\newtheorem{prop}[thm]{Proposition}
\newtheorem{cor}[thm]{Corollary}
\theoremstyle{definition}
\newtheorem{defn}[thm]{Definition}
\theoremstyle{remark}
\newtheorem*{rmk}{Remark}
\newtheorem*{rmks}{Remarks}
\crefname{section}{§}{§§}
\Crefname{section}{§}{§§}
\title{Meromorphic quasi-modular forms and their $L$-functions}
\author{\small Weijia Wang}
\author{\small Hao Zhang}
\address[Weijia Wang]{Yanqi Lake Beijing Institute of Mathematical Sciences and Applications $\&$ Yau Mathematical Sciences Center, Tsinghua University, Beijing 101408, P. R. China}
\address[Hao Zhang]{School of Mathematics, Hunan University, Changsha 410082, P. R. China}
\email{weijiawang@tsinghua.edu.cn}
\email{zhanghaomath@hnu.edu.cn}
\date{}
\begin{document}

\maketitle
\begin{abstract}
    We investigate the meromorphic quasi-modular forms and their $L$-functions. We study the space of meromorphic quasi-modular forms. Then we define their $L$-functions by using the technique of regularized integral. Moreover, we give an explicit formula for the $L$-functions. As an application, we obtain some vanishing results of special $L$-values of meromorphic quasi-modular forms.
\end{abstract}

\section{Introduction}


Given  a holomorphic cusp form $f(\tau)=\sum_{n>0}a_f(n)q^n$ of weight $k$ on $\SL_2(\mbz)$, where $q=e^{2\pi i\tau}$, the Dirichlet $L$-function associated to $f$ is the series
\[L(f,s)=\sum_{n\geq 1}\frac{a_f(n)}{n^s}=\frac{(2\pi)^s}{\Gamma(s)}\int_0^{\infty}f(it)t^{s-1}dt.\]
It is well-known that $L(f,s)$ satisfies a functional equation under $s\mapsto k-s$ and admits a meromorphic continuation to the complex plane. We can extend the $L$-function to holomorphic quasi-modular forms by Fourier expansions naturally. Due to the work of Kaneko and Zagier~\cite{KZ95}, holomorphic quasi-modular form is always a linear combination of iterated derivatives of modular forms and $E_2$. So their $L$-function comes from shifts of the $L$-functions of  modular forms and the $L$-function of $E_2$. One can also find more details of holomorphic quasi-modular form and their $L$-functions in \cite{BD19, CS179, Zagier08}.


However, everything goes differently if we consider meromorphic quasi-modular forms. If $f$ is a weakly holomorphic modular form, then it has exponential growth at infinity. So in this case, the Dirichlet series $L(f,s)$ associated to the Fourier coefficients of $f$ never converges. To overcome this  problem, we need to introduce the regularized integrals. The regularized integrals and $L$-functions of weakly holomorphic modular forms have been studied in \cite{BFK14}. L\"obrich and Schwagenscheidt studied the $L$-values of some special meromorphic modular forms using Cauchy principal valued integrals in \cite{LS22}. McGady defined and investigated the $L$-functions for meromorphic modular forms which are holomorphic at infinity in \cite{Mc19}.

In this paper, we will study the structure of meromorphic quasi-modular forms and their Dirichlet $L$-functions. Unlike the classical case, the depth of meromorphic quasi-modular forms could be larger than $k/2$, so a meromorphic quasi-modular form may not become a linear combination of iterated derivatives of meromorphic modular forms. In fact, we have the following structure theorem.
\begin{thm}\label{thm:decomposition}
We have the following decomposition of $\C$-vector space of meromorphic quasi-modular forms
$$
\qm_k^{*}=\bigg(\bigoplus_{l=0}^{\frac{k}{2}-1}D^l\mcm_{k-2l}^{*}\bigg)\bigoplus \bigg(\bigoplus_{l=0}^{\frac{k}{2}-1}D^l\qm_{k-2l}^{*,\,k-2l-1}\bigg)\bigoplus\bigg(\bigoplus_{l=k}^{\infty}D^l\mcm_{k-2l}^{*}\bigg).
$$
\end{thm}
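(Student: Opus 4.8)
The plan is to reduce everything to the canonical ``$E_2$-expansion'' of a meromorphic quasi-modular form together with the action of $D$ on leading terms. First I would recall the structural fact that $\qm_*^{*}=\mcm_*^{*}[E_2]$ is a polynomial ring, i.e.\ that $E_2$ is transcendental over the ring of meromorphic modular forms (the inhomogeneous term in the transformation law of $E_2$ shows it is not itself meromorphic modular). Hence every $f\in\qm_k^{*}$ has a \emph{unique} expansion $f=\sum_{i=0}^{p}g_i\,E_2^{\,i}$ with $g_i\in\mcm_{k-2i}^{*}$ and $g_p\neq 0$, where $p=\operatorname{depth}(f)$; equivalently, the depth filtration on $\qm_k^{*}$ is canonically split, with graded pieces $\operatorname{gr}_i\qm_k^{*}=\mcm_{k-2i}^{*}\,E_2^{\,i}\cong\mcm_{k-2i}^{*}$, one copy for each even weight $\le k$. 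In this language $\qm_{k-2l}^{*,\,k-2l-1}$ is the top graded piece $\mcm_{2-(k-2l)}^{*}\,E_2^{\,k-2l-1}$ of $\qm_{k-2l}^{*}$. So, as abstract vector spaces, both sides are $\bigoplus_{w\,\mathrm{even},\,w\le k}\mcm_w^{*}$, the three families on the right supplying the positive weights $2,4,\dots,k$, the weights $0,-2,\dots,2-k$, and the weights $-k,-k-2,\dots$ respectively.

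The key computation is the effect of $D$ on a leading term. From $Dg=\vartheta g+\tfrac{\operatorname{wt}(g)}{12}E_2 g$, with $\vartheta g$ again meromorphic modular, and $DE_2=\tfrac1{12}(E_2^{2}-E_4)$, one gets
\[
D\bigl(g\,E_2^{\,i}\bigr)=\frac{\operatorname{wt}(g)+i}{12}\,g\,E_2^{\,i+1}+(\text{terms of $E_2$-degree}\le i),
\]
and iterating: if $f\in\qm_w^{*}$ has leading term $g\,E_2^{\,p}$, then $D^{l}f$ has $E_2$-degree $\le p+l$ with leading term $\dfrac{(w-p)_l}{12^{l}}\,g\,E_2^{\,p+l}$, where $(x)_l=x(x+1)\cdots(x+l-1)$. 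Applying this to the three families: for $g\in\mcm_{k-2l}^{*}$ (depth $0$, weight $k-2l$), the leading term of $D^{l}g$ lies in $\operatorname{gr}_l\qm_k^{*}$ with scalar $(k-2l)_l$, and $(k-2l)_l=0$ exactly when $0\in\{k-2l,\dots,k-l-1\}$, i.e.\ exactly when $\tfrac k2\le l\le k-1$; thus for $0\le l\le\tfrac k2-1$ and for $l\ge k$ the operator $D^{l}$ is injective on $\mcm_{k-2l}^{*}$ and its leading term fills all of $\operatorname{gr}_l\qm_k^{*}$. For $g\,E_2^{\,k-2l-1}\in\qm_{k-2l}^{*,\,k-2l-1}$ (depth $k-2l-1$, weight $k-2l$, so $w-p=1$), the leading term of $D^{l}$ of it lies in $\operatorname{gr}_{k-l-1}\qm_k^{*}$ with scalar $(1)_l=l!\neq0$ (so $D^{l}$ is again injective on this space), and as $l$ runs over $0,\dots,\tfrac k2-1$ the index $k-l-1$ runs over exactly the band $\tfrac k2,\dots,k-1$ missed above. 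Hence each building block maps, via a nonzero scalar on leading terms, isomorphically onto exactly one $\operatorname{gr}_i\qm_k^{*}$, and each $i\ge 0$ occurs exactly once.

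It then remains to upgrade this ``graded'' statement to the asserted internal direct sum, which is a standard triangularity argument. Since the building blocks have pairwise distinct top depths, a finite relation $\sum_{\mathrm{blocks}}x=0$ is killed by repeatedly taking the highest depth that occurs: there the relation forces the leading term of a single summand to vanish, hence that summand to vanish by the injectivity above. Conversely, given $f\in\qm_k^{*}$ of depth $p$, the unique block with top depth $p$ surjects onto $\operatorname{gr}_p\qm_k^{*}$, so subtracting a suitable element of it strictly lowers the depth; iterating expresses $f$ as a finite sum of block elements. The only place real work is required is the middle step — locating exactly where the Pochhammer factor $(k-2l)_l$ vanishes — since that is precisely what splits the derivatives of meromorphic modular forms into the two ranges $l\le\tfrac k2-1$ and $l\ge k$ and forces the intermediate band to be supplied instead by derivatives of the genuinely new depth-$(m-1)$ quasi-modular forms of weight $m=k-2l$. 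One should also record the routine bookkeeping that $D^{l}$ raises $E_2$-degree by at most $l$, so nothing leaks into higher graded pieces, and that the infinite sum over $l\ge k$ is harmless since any fixed $f$ has finite depth.
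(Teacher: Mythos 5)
Your proof is correct and follows essentially the same strategy as the paper's: both rest on the $E_2$-expansion $\qm^{*}=\mcm^{*}[E_2]$ of Theorem~\ref{thm:fE2}, track the top $E_2$-coefficient under iterates of $D$, and observe that the relevant scalar --- your $(k-2l)_l/12^{l}$, the paper's $\tfrac{l!}{(2\pi i)^{l}}\binom{k-l-1}{l}$ from \eqref{eq:qpdpf}, which agree up to normalization since $(k-2l)_l=l!\binom{k-l-1}{l}$ --- vanishes exactly in the band $\tfrac{k}{2}\le l\le k-1$, after which directness follows from the pairwise distinct top depths. The one point where you genuinely diverge is the treatment of forms of depth $\tfrac{k}{2}\le p\le k-1$: the paper proceeds by a double induction on $k+p$, rewriting $F_{k-2p}E_2^{p}$ through $D(F_{k-2p}E_2^{p-1})$ and descending to weight $k-2$, whereas your closed-form leading coefficient $(1)_l/12^{l}=l!/12^{l}$ for $D^{l}$ applied to a weight-$w$, depth-$(w-1)$ form lets you kill the top graded piece in a single step (take $h=\tfrac{12^{l}}{l!}\,g_p\,E_2^{2p-k+1}$ with $l=k-p-1$, where $g_p$ is the top $E_2$-coefficient of $f$) and strictly lower the depth at once. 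This is a cleaner execution of the same underlying idea and could substitute for Part~3 of the paper's argument.
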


We next generalize the Rankin--Cohen bracket to meromorphic modular forms. Cohen \cite{Coh75} proved that the Rankin--Cohen bracket of two holomorphic modular forms is again a modular form. We prove that the Rankin--Cohen bracket of two meromorphic modular forms is also a meromorphic modular form. Together with Theorem \ref{thm:decomposition}, we prove 

\begin{thm}~\label{thm:fg}
Let $f$, $g$ be two meoromorphic quasi-modular forms in the space $\qm_{+}^{*}$ or $\qm_{-}^{*}$ of weight $k$, $l$ and depth $s$, $t$  respectively. Then
$$fg\in \mcm_{k+l}^{*}\bigoplus\bigg(\bigoplus_{i=0}^{s+t}D^{s+t-i}\mcs_{k+l-2s-2t+2i}^{*}\bigg),$$
where $\mcs^{*}$ is the subalgebra of $\mcm^{*}$ consisting of meoromorphic cusp forms, i.e. those with zero constant terms in their Fourier expansions.
\end{thm}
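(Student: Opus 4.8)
The plan is to reduce the product $fg$ to a linear combination of products of iterated derivatives of meromorphic modular forms, to expand each such product by a meromorphic version of the Rankin--Cohen formula, and then to split off the genuinely modular part from the derivative part.

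First I would use that the claimed inclusion is bilinear in $f$ and $g$, so by Theorem~\ref{thm:decomposition} together with the description of $\qm_{+}^{*}$ and $\qm_{-}^{*}$ it suffices to treat $f=D^{a}h$, $g=D^{b}m$ with $h\in\mcm_{k-2a}^{*}$, $m\in\mcm_{l-2b}^{*}$ meromorphic modular forms and $0\le a\le s$, $0\le b\le t$. A point worth isolating here is that in the $\qm_{+}^{*}$ case the modular parts have weight $k-2a>0$ and $l-2b>0$, while in the $\qm_{-}^{*}$ case they have weight $k-2a<0$ and $l-2b<0$ (the trivial case $k=l=a=b=0$ being immediate); in particular the combined weight $(k-2a)+(l-2b)$, and its relevant shifts, stay away from the finitely many exceptional values at which the Rankin--Cohen expansion coefficients below are singular.

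Next I would invoke the classical identity writing a product of derivatives of two modular forms of weights $\kappa,\mu$ as $D^{a}h\cdot D^{b}m=\sum_{n=0}^{a+b}c_{a,b,n}(\kappa,\mu)\,D^{a+b-n}[h,m]_{n}$, where the $c_{a,b,n}$ are rational in $\kappa,\mu$ (with $\kappa+\mu$ and its shifts in the denominator) and $[\,\cdot\,,\,\cdot\,]_{n}$ is the $n$-th Rankin--Cohen bracket. This is a formal identity in the differential ring generated by $h$, $m$ and the operator $D$ — one can see it by solving the triangular linear system expressing the monomials $h^{(i)}m^{(a+b-i)}$ in terms of the $D^{a+b-n}[h,m]_{n}$ — so it transfers verbatim to meromorphic modular forms, the denominators being nonzero by the previous step. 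By the meromorphic analogue of Cohen's theorem established just above, each $[h,m]_{n}$ lies in $\mcm_{\kappa+\mu+2n}^{*}$. Substituting $\kappa=k-2a$, $\mu=l-2b$ then shows that $fg$ is a linear combination of terms $D^{c}\phi_{c}$ with $\phi_{c}\in\mcm_{k+l-2c}^{*}$ and $0\le c\le a+b\le s+t$; in particular no extremal-depth generator of Theorem~\ref{thm:decomposition} can appear, so $fg\in\bigoplus_{c=0}^{s+t}D^{c}\mcm_{k+l-2c}^{*}$.

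Finally I would separate the $c=0$ term, which is a meromorphic modular form of weight $k+l$ and supplies the summand $\mcm_{k+l}^{*}$, and for $c\ge1$ write $D^{c}\phi_{c}=D^{c-1}(D\phi_{c})$; since $D=q\,\tfrac{d}{dq}$ annihilates constant terms, $D\phi_{c}\in\mcs_{k+l-2c+2}^{*}$, hence $D^{c}\phi_{c}\in D^{c-1}\mcs_{k+l-2c+2}^{*}$. Reindexing by $i=s+t-(c-1)$ identifies this with the summand $D^{s+t-i}\mcs_{k+l-2s-2t+2i}^{*}$, giving the asserted decomposition. The part I expect to be the main obstacle is twofold: justifying that the Rankin--Cohen expansion identity, with its weight-dependent denominators, genuinely survives the passage to meromorphic forms — this is exactly where the preceding generalisation of Cohen's theorem is indispensable, and one must verify that the restriction to $\qm_{+}^{*}$ or $\qm_{-}^{*}$ really does keep all combined weights out of the exceptional locus — and the depth bookkeeping guaranteeing both that $c\le s+t$ and that the product remains inside $\qm_{+}^{*}\oplus\qm_{-}^{*}$ rather than leaking into the extremal-depth summand of Theorem~\ref{thm:decomposition}.
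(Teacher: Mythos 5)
Your overall route coincides with the paper's: reduce via Theorem~\ref{thm:decomposition} to products $D^{a}h\cdot D^{b}m$ of iterated derivatives of meromorphic modular forms, expand by the Lanphier--El Gradechi formula~\eqref{eq:dfgb} into iterated derivatives of Rankin--Cohen brackets (the paper verifies exactly your nondegeneracy condition, in the form ``$\kappa+\mu\geq 2$ or $\kappa+\mu+2n\leq 0$'', which holds automatically in $\qm_{+}^{*}$ and $\qm_{-}^{*}$ respectively), and invoke the meromorphic extension of Cohen's theorem to place each bracket in $\mcm^{*}$. Up to the intermediate conclusion $fg\in\sum_{c=0}^{s+t}D^{c}\mcm_{k+l-2c}^{*}$ your argument is a fleshed-out version of the paper's one-line proof.

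The last step, however, contains a genuine error. You write $D^{c}\phi_{c}=D^{c-1}(D\phi_{c})$ and claim $D\phi_{c}\in\mcs_{k+l-2c+2}^{*}$ because $D$ annihilates constant terms. But $\mcs^{*}$ is by definition a subspace of $\mcm^{*}$: its elements must be \emph{modular}, and by Proposition~\ref{prop:depthd} the derivative $D\phi_{c}$ of a meromorphic modular form of nonzero weight is quasi-modular of depth exactly $1$, hence not in $\mcm^{*}$ at all. Having zero constant term is not the same as being a meromorphic cusp form, so your argument does not deliver the refinement from $\mcm^{*}$ to $\mcs^{*}$ demanded by the statement. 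In the intended argument the cuspidality must come from the brackets themselves (classically $[h,m]_{j}$ is cuspidal for $j\geq 1$ when $h,m$ are holomorphic), not from applying $D$; and even that leaves open the $j=0$ term $D^{a+b}(hm)$ in~\eqref{eq:dfgb}, whose coefficient is generically nonzero and whose modular part $hm$ need not be cuspidal. Indeed $f=DE_{4}$, $g=E_{4}$ gives $fg=\tfrac{1}{2}DE_{8}$ with $Q_{1}(DE_{8})=\tfrac{8}{2\pi i}E_{8}$, which cannot equal $Q_{1}(DS)=\tfrac{8}{2\pi i}S$ for any $S$ with vanishing constant term. So the conversion of the $c\geq 1$ summands from $\mcm^{*}$ into $\mcs^{*}$ is exactly the point that remains unjustified, and the ``$D$ kills constants'' device cannot supply it.
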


To state our main result, we need to generalize the regularized integral to meromorphic quasi-modular form $f$. For the meromorphic quasi-modular form $f$, we consider its regularized integral 
\[\Lambda(f,s)=\int_0^{\infty,\ast}f(it)t^{s-1}dt.\]
We will give an explicit formula for $\Lambda(f,s)$ in Theorem \ref{thm:lfunction}. The $L$-function $\Lambda(f,s)$ admits some classical properties as usual. More precisely, we have 
\begin{thm}\label{thm:lambdaf}
Let $f\in\qm_k^{\mero,\,p}$ be a meromorphic quasi-modular form of weight $k$ and depth $p$. Let $f_0,\cdots,f_p$ be the component functions corresponding to $f$. Then we have
\begin{enumerate}[label=(\roman*)]
    \item  The complete $L$-function $\Lambda(f,s)$ extends to a meromorphic function for all $s\in\C$ with the
    functional equation
    \begin{equation*}
    \Lambda(f,s)=\sum_{r=0}^{p}i^{k-r}\Lambda(f_{r},k-r-s). 
    \end{equation*}
    \item The complete $L$-function $\Lambda(f,s)$ has only possibly simple poles either at $s=0$ or at all integers within $k-p\leq s\leq k$,  Moreover, the residue of $\Lambda(f,s)$ at an integer $n$ is
    $$  \Res_{s=n}\, \Lambda(f,s)=
    \begin{cases*}
     -a_{f}(0)+a_{f_k}(0) & if $n=0$\\
     i^{n}\,a_{f_{k-n}}(0)& if $n\neq0$ and $k-p\leq n\leq k$
    \end{cases*}
    .$$

     \item  In general, the $L$-functions of component functions $\Lambda(f_m,s)$ satisfy the functional equations
\begin{equation*}
   \Lambda(f_m,s)=\sum_{r=0}^{p-m}i^{k-2m-r}\binom{m+r}{r}\Lambda(f_{m+r},k-2m-r-s). 
\end{equation*}

\end{enumerate}

\end{thm}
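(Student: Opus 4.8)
The plan is to prove Theorem \ref{thm:lambdaf} by reducing everything to the transformation behaviour of $f$ under $\tau \mapsto -1/\tau$ and a careful analysis of the regularized integral $\Lambda(f,s) = \int_0^{\infty,\ast} f(it)\, t^{s-1}\, dt$. Recall that a meromorphic quasi-modular form $f$ of weight $k$ and depth $p$ has component functions $f_0, \dots, f_p$ (with $f_0 = f$) characterized by the transformation rule $f\bigl(\tfrac{a\tau+b}{c\tau+d}\bigr) = (c\tau+d)^k \sum_{r=0}^{p} f_r(\tau) \bigl(\tfrac{c}{c\tau+d}\bigr)^r$; equivalently each $f_r$ is, up to a combinatorial constant, $\frac{1}{r!}$ times an iterated ``lowering'' of $f$, and is itself a meromorphic quasi-modular form of weight $k-2r$ and depth $p-r$. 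The key input is that under the inversion $S$ one has $f(-1/\tau) = \tau^k \sum_{r=0}^p f_r(\tau)\,\tau^{-r}$, so on the imaginary axis, writing $\tau = it$, $f(i/t) = (it)^k \sum_{r=0}^p (it)^{-r} f_r(it)$.

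\textbf{Step 1: regularization and the splitting at $t=1$.} First I would recall from the earlier construction of the regularized integral (and Theorem \ref{thm:lfunction}) that $\Lambda(f,s)$ is defined by splitting $\int_0^\infty = \int_0^1 + \int_1^\infty$, regularizing each piece against the principal parts of the Fourier expansion of $f$ at $i\infty$ (the polynomial-in-$t$ and exponential-growth terms) so that each regularized half-integral converges to a meromorphic function of $s$. The constant term $a_f(0)$ of $f$ and the constant terms $a_{f_r}(0)$ of the components produce the elementary meromorphic pieces $\int_1^\infty t^{s-1}\,dt = -1/s$ and, after applying $S$ to the integral near $0$, $\int_1^\infty t^{k-r-s-1}\,dt = -1/(k-r-s)$. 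This is exactly what will generate the stated poles and residues in part (ii): the pole at $s=0$ with residue $-a_f(0) + a_{f_k}(0)$ — the two contributions being the constant term of $f$ from the tail at $\infty$ and the constant term of the top component $f_k$ (note $p \geq k$ can occur here, which is why the index $k$ appears) arising from the $S$-transformed tail near $0$ — and the poles at $s = n$ for $k-p \le n \le k$, $n\neq 0$, with residue $i^n a_{f_{k-n}}(0)$ coming from the $(k-n)$-th component's constant term.

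\textbf{Step 2: the functional equation (i).} In the regularized integral $\int_0^{\infty,\ast} f(it) t^{s-1}\, dt$, substitute $t \mapsto 1/t$ in the piece $\int_0^{1,\ast}$; the Jacobian and the substitution turn it into $\int_1^{\infty,\ast} f(i/t)\, t^{-s-1}\, dt$, and then plugging in $f(i/t) = (it)^k \sum_{r=0}^p (it)^{-r} f_r(it) = \sum_{r=0}^p i^{k-r} t^{k-r} f_r(it)$ gives $\sum_{r=0}^p i^{k-r} \int_1^{\infty,\ast} f_r(it)\, t^{(k-r-s)-1}\, dt$. Adding back $\int_1^{\infty,\ast} f(it) t^{s-1}\,dt$ and matching against the analogous decomposition of each $\Lambda(f_r,\cdot)$, the half-integrals reassemble into $\sum_{r=0}^p i^{k-r}\Lambda(f_r, k-r-s)$; the regularization terms match up precisely because the Fourier principal part of $f_r$ near $i\infty$ is governed by the same data. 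The subtle bookkeeping is that the $r=0$ term on the right, $i^k\Lambda(f, k-s)$, must absorb the ``other half'' correctly — i.e. one verifies that $\Lambda(f,s) + (\text{contribution of tail near }0) $ is symmetric — so that no double counting occurs; I would carry this out by the standard device of writing $\Lambda(f,s) = \int_1^{\infty,\ast} f(it)t^{s-1} dt + \int_1^{\infty,\ast}\bigl(\sum_r i^{k-r} t^{k-r} f_r(it)\bigr) t^{-s-1} dt + (\text{elementary pole terms})$ and observing this expression is manifestly the claimed sum.

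\textbf{Step 3: part (iii), the component functional equations.} Since each component $f_m$ is again a meromorphic quasi-modular form — of weight $k-2m$ and depth $p-m$, with its own components $(f_m)_j$ — part (i) applied to $f_m$ immediately gives $\Lambda(f_m,s) = \sum_{j=0}^{p-m} i^{(k-2m)-j}\Lambda((f_m)_j, (k-2m)-j-s)$. It then remains to identify $(f_m)_j$ with a multiple of $f_{m+j}$. This is a purely formal identity among the components: iterating the defining cocycle relation (or differentiating the generating identity for $f$ twice), one finds $(f_m)_j = \binom{m+j}{j} f_{m+j}$ — the binomial coefficient being exactly the combinatorial factor counting how the ``$m$-th then $j$-th lowering'' compares with the ``$(m+j)$-th lowering.'' Substituting this in yields the stated formula $\Lambda(f_m,s) = \sum_{r=0}^{p-m} i^{k-2m-r}\binom{m+r}{r}\Lambda(f_{m+r}, k-2m-r-s)$, and taking $m=0$ recovers (i) as a consistency check.

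\textbf{Main obstacle.} The genuinely delicate point is \emph{not} the inversion substitution but the interaction between regularization and that substitution: one must show that the ad hoc subtractions used to define $\int_0^{1,\ast}$ and $\int_1^{\infty,\ast}$ transform into each other correctly under $t\mapsto 1/t$ composed with the $S$-transformation law, up to exactly the elementary rational functions of $s$ that produce the poles in (ii). Concretely, the Fourier principal part of $f$ at $i\infty$ (giving the regularization of the tail near $\infty$) must correspond, after applying $S$, to the principal part data of the $f_r$ near $i\infty$ (giving the regularization near $0$); keeping track of all exponential and polynomial terms simultaneously, and checking that the ``leftover'' is precisely $-a_f(0)/s + \sum_r i^{k-r} a_{f_r}(0)/(k-r-s)$, is where the real work lies. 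I would organize this by first treating $f$ a genuine (meromorphic) \emph{cusp} quasi-modular form, where no elementary terms appear and the functional equation is clean, and then adding back the constant-term contributions by linearity, reducing the general case to the explicit computation of $\Lambda$ for the single term $a_f(0)\cdot 1$ — which is elementary.
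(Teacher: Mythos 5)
Your proposal is correct and follows essentially the same route as the paper: parts (i) and (ii) via the split of the regularized integral at a point $t_0$, the inversion $t\mapsto 1/t$ combined with the transformation law $f(i/t)=\sum_r i^{k-r}t^{k-r}f_r(it)$, and the explicit formula of Theorem \ref{thm:lfunction} for the pole/residue bookkeeping; part (iii) via Lemma \ref{lem:qjf}, identifying $(f_m)_j=\binom{m+j}{j}f_{m+j}$ and applying (i) to $f_m$. The only cosmetic differences are your choice of splitting point $t=1$ instead of a general $t_0$ and your suggestion to isolate the constant-term contributions by linearity, which the paper subsumes into the already-proved explicit formula.
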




The paper is organized as follows. In Section \ref{sec:2}, we introduce the notations and basic properties of meromorphic quasi-modular forms. In Section \ref{sec:3}, we recall the Maass--Shimura derivative and Serre derivative. In Section \ref{sec:4} and Section \ref{sec:5}, we prove the structure theorem of meromorphic quasi-modular forms and generalize the Rankin--Cohen bracket to meromorphic quasi-modular forms. In Section \ref{sec:6} and Section \ref{sec:7}, we introduce the regularized integral for meromorphic functions. In Section \ref{sec:8}, we define the  $L$-function of meromorphic quasi-modular form through regularized integral and give an explicit formula for the $L$-function. Finally, in Section \ref{sec:9}, we give some special values of $L$-functions.


\section{meromorphic quasi-modular forms}\label{sec:2}
Let $\Hcal=\{\tau\in\C\,|\,\IM(\tau)>0\}$ be the Poincaré upper half-plane. A \emph{meromorphic modular form} of weight $k\in\Z$ is a meromorphic function on $\Hcal$ which satisfies
$$
f|_{k}\gamma(\tau)\coloneqq (c\tau+d)^{-k}f\left(\frac{a\tau+b}{c\tau+d}\right)=f(\tau),\quad \text{where }\gamma=\begin{pmatrix*}a&b\\c&d\end{pmatrix*}\in\SL_2(\Z),
$$
and which is also meromorphic at infinity, that is, having a Laurent (Fourier) expansion
$$f(\tau)=\sum_{n\gg -\infty}a_{f}(n)q^n,\quad \text{where }q=e^{2\pi i\tau}.$$
If $f$ is holomorphic on $\Hcal$ but meromorphic at infinity, we say that $f$ is a \emph{weakly holomorphic modular form}. If $f$ is further holomorphic at infinity, i.e. $a_{f}(n)=0$ for all $n<0$, we say that  $f$ is a \emph{holomorphic modular form}. Note that a meromorphic modular form usually has poles on $\Hcal$ and has exponential growth at infinity.

We denote by $\Mcal_k^{\mero}$ (resp. $\Mcal_k^{!}$, $\Mcal_{k}$) the $\C$-vector space of meromorphic (resp. weakly holomorphic, holomorphic) modular forms of weight $k$,  we denote also by
$$
\Mcal^{*}=\bigoplus_{k\in 2\Z}\Mcal_k^{*},\quad *\in\lbrace\text{\mero},!,-\rbrace
$$
the graded $\C$-algebra of meromorphic (resp. weakly holomorphic, holomorphic) modular forms.

As usual, we define for integer $k\geq 2$ the Eisenstein series
$$E_{k}(\tau)=1-\frac{2k}{B_k}\sum_{n=1}^{\infty}\sigma_{k-1}(n)q^n,$$
where $B_k$ is the $k$-th Bernoulli number and $$\sigma_{k-1}(n)=\sum_{d|n}d^{k-1}.$$ 
The Eisenstein series $E_{k}$ are holomorphic modular forms of weight $k$ for $k\geq 4$. In particular, the Eisenstein series $E_{4}$ and $E_{6}$ are algebraically independent and generate the whole graded ring of meromorphic modular forms. To be specific, as graded $\C$-algebras, one has

\begin{align*}
 \Mcal&=\C[E_{4},E_{6}]=\bigoplus_{k\in\N}\bigoplus_{\substack{(a,b)\in\N^2\\4a+6b=k}}\C E_{4}^{a}E_{6}^{b},\\
\Mcal^{!}&=\C[\Delta^{-1},E_{4},E_{6}]=\bigoplus_{k\in\Z}\bigoplus_{\substack{(a,b,n)\in\N^3\\4a+6b-12n=k}}\C\frac{E_{4}^{a}E_{6}^{b}}{\Delta^{n}}
\end{align*}
and
$$ \Mcal^{\mero}=\C[E_{4},E_{6}]_{((0))},$$
where $\Delta=\frac{1}{1728}(E_{4}^3-E_{6}^2)$ is the unique normalized cusp form of weight $12$ and $((0))$ denotes the homogeneous localization at the prime ideal $(0)$.

However in the case $k=2$, the Eisenstein series $E_2$ is no longer modular. In fact, it verifies the following transformation rule
\begin{equation}\label{eq:E2}
 E_2|_{2}\gamma(\tau)=E_2(\tau)+\frac{6}{\pi i}\left(\frac{c}{c\tau+d}\right).   
\end{equation}
for any $\gamma=\begin{pmatrix*}a&b\\c&d\end{pmatrix*}\in\SL_2(\Z)$. In general, we define
\begin{defn}
A \emph{meromorphic quasi-modular form} of weight $k\in\Z$ is a meromorphic function $f$ on $\Hcal$ with a collection of \emph{component functions} $f_0,f_1,\cdots,f_p$ over $\Hcal$, such that
\begin{enumerate}[label=(\roman*)]
    \item each $f_i$ is meromorphic on $\Hcal$ and is also meromorphic at infinity,
    \item the function $f$ verifies the transformation rule
    \begin{equation}\label{eq:slashf}
        (f|_k\gamma)(\tau)=\sum_{r=0}^p f_r(\tau)\left(\frac{c}{c\tau+d}\right)^r,\quad\text{for any } \gamma\in \SL_2(\mbz).
    \end{equation}
\end{enumerate}
If $f_p\neq 0$, the number $p$ is called the \emph{depth} of $f$. We write also $f_r=Q_r(f)$ for the $r$-th component of a meromorphic quasi-modular form $f$.
\end{defn}
\begin{rmk}
It can be seen that the Eisenstein series $E_2$ is of weight $2$ and depth $1$. A meromorphic quasi-modular form of depth $0$ is nothing but a meromorphic modular form.
\end{rmk}

We will denote by $\qm_k^{\mero,\,p}$ (resp. $\qm_k^{!,\,p}$, $\qm_k^{p}$) for the set of meromorphic (resp. weakly holomorphic, holomorphic) quasi-modular forms of weight $k$ and depth $p$ and $\qm_k^{\mero,\,\leq p}$ (resp. $\qm_k^{!,\,\leq p}$, $\qm_k^{\leq p}$) the $\mbc$-vector space of meromorphic (resp. weakly holomorphic, holomorphic) quasi-modular forms of weight $k$ and at most depth $p$. 
Analogously, write
$$\qm^{*}=\bigcup_{p\in\N, k\in\Z}\qm_k^{*,\,p},\quad *\in\lbrace\text{\mero},\,!\, ,-\rbrace,$$
for the $\C$-algebra of meromorphic (resp. weakly holomorphic, holomorphic) quasi-modular forms.

Similar to holomorphic quasi-modular forms, each component function $f_j$ of a meromorphic quasi-modular form $f$ is again a meromorphic quasi-modular form.

\begin{lem}\label{lem:qjf}
Let $f\in \qm_k^{\mero,\,p}$ be a meromorphic quasi-modular form of weight $k$ and depth $p$ with component functions $f_0,\cdots,f_p$. Then for any $0\leq j\leq p$, the function $f_j$ is meromorphic quasi-modular form of weight $k-2j$ and depth $p-j$. More precisely, for any $\gamma\in\SL_2(\mbz)$,
\[(f_j|_{k-2j}\gamma)(\tau)=\sum_{r=0}^{p-j}\binom{j+r}{r}f_{j+r}(\tau)\left(\frac{c}{c\tau+d}\right)^r.\]
In particular, we have that $f_0=f$ and $f_p$ is indeed a meromorphic modular form of weight $k-2p$.
\end{lem}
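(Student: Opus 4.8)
The strategy is to extract the transformation law for the component functions $f_j$ directly from the defining transformation rule \eqref{eq:slashf} by exploiting the cocycle property of the slash action. The key algebraic input is that for $\gamma_1,\gamma_2\in\SL_2(\Z)$ one has $f|_k(\gamma_1\gamma_2)=(f|_k\gamma_1)|_k\gamma_2$, so applying \eqref{eq:slashf} in two ways to the product $\gamma_1\gamma_2$ and comparing yields a system of relations among the $f_r$'s. Concretely, I would fix $\gamma\in\SL_2(\Z)$, write $\gamma=\begin{pmatrix*}a&b\\c&d\end{pmatrix*}$, and compute $(f|_k(\gamma\delta))(\tau)$ for an arbitrary $\delta=\begin{pmatrix*}a'&b'\\c'&d'\end{pmatrix*}\in\SL_2(\Z)$ in two ways: first by expanding $f|_k\gamma$ via \eqref{eq:slashf} and then applying $|_k\delta$, and second by directly applying \eqref{eq:slashf} to $\gamma\delta$. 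Using the standard identity $\dfrac{c''}{c''\tau+d''}=\dfrac{c'}{c'\tau+d'}+\dfrac{1}{(c'\tau+d')^2}\cdot\dfrac{c}{c\,\frac{a'\tau+b'}{c'\tau+d'}+d}$ where $\begin{pmatrix*}a''&b''\\c''&d''\end{pmatrix*}=\gamma\delta$, together with the binomial theorem, one matches coefficients of powers of $\left(\dfrac{c'}{c'\tau+d'}\right)$.

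The cleaner route, which I would actually carry out, is to reverse the roles: take the known transformation of $f$ and determine how each $f_j$ must transform. Set $g=f|_k\gamma_1$ for a \emph{fixed} $\gamma_1$; then for all $\gamma_2$, $(g|_k\gamma_2)(\tau)=(f|_k(\gamma_1\gamma_2))(\tau)=\sum_{r=0}^p f_r(\tau)\left(\frac{c_1c_2...}{...}\right)^r$ expanded at $\gamma_1\gamma_2$. On the other hand $g(\tau)=\sum_{r=0}^p f_r(\tau)\left(\frac{c_1}{c_1\tau+d_1}\right)^r$, and applying $|_k\gamma_2$ to this expression term-by-term and using the transformation of the factor $\frac{c_1}{c_1\tau+d_1}$ under $\gamma_2$ (it picks up exactly a shift by $\frac{1}{(c_2\tau+d_2)^2}$ times something, after the $|_2$-type twist), one reads off that the coefficient of $\left(\frac{c_2}{c_2\tau+d_2}\right)^0$ forces $g$'s own component expansion, while the full comparison gives, for each $j$,
\begin{equation*}
(f_j|_{k-2j}\gamma_2)(\tau)=\sum_{r\geq 0}\binom{j+r}{r}f_{j+r}(\tau)\left(\frac{c_2}{c_2\tau+d_2}\right)^r,
\end{equation*}
the binomial coefficient arising from expanding $\left(x+y\right)^{j+r}$ and collecting the term that contributes a factor $y^r$ with $x$-part of degree $j$. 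Taking $\delta$ to range over a generating set (for instance $T=\begin{pmatrix*}1&1\\0&1\end{pmatrix*}$ and $S=\begin{pmatrix*}0&-1\\1&0\end{pmatrix*}$) suffices to verify the identity, but since \eqref{eq:slashf} is assumed for all $\gamma$, the comparison above is valid verbatim for every $\gamma_2$. The meromorphy of each $f_j$ on $\Hcal$ and at infinity is part of the hypothesis, so once the transformation law is in hand, $f_j\in\qm_{k-2j}^{\mero,\,p-j}$ follows immediately; setting $j=p$ kills all terms with $r\geq 1$ and shows $f_p$ is genuinely modular of weight $k-2p$, while $j=0$ gives the tautology $f_0=f$ with $\binom{r}{r}=1$.

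The main obstacle, and the only genuinely computational point, is bookkeeping the cocycle expansion correctly: tracking how the automorphy factor $\frac{c}{c\tau+d}$ transforms under a further substitution and then disentangling the resulting double sum into the stated single sum with the right binomial coefficients. This is a finite, purely formal manipulation — the kind of identity that is well known for holomorphic quasi-modular forms (see \cite{Zagier08}) — and the meromorphic setting changes nothing in the argument, since at every stage we are only manipulating the transformation equation as an identity of meromorphic functions on $\Hcal$. I would present the induction on $j$ (or equivalently on the depth) explicitly if a fully rigorous write-up is wanted: assuming the claim for $f$ itself, apply it to the component $f_1$ regarded as a quasi-modular form of weight $k-2$ and depth $p-1$, and iterate; the Pascal-type recursion $\binom{j+r}{r}=\binom{j-1+r}{r}+\binom{j-1+r}{r-1}$ is exactly what makes the induction close.
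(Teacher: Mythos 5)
Your proposal is correct and follows essentially the same route as the paper, which omits the argument and defers to the standard cocycle computation for holomorphic quasi-modular forms in \cite{Zagier08} and \cite{CS179}: expand $f|_k(\gamma_1\gamma_2)$ in two ways using the cocycle identity for $\frac{c}{c\tau+d}$, apply the binomial theorem, and match coefficients. The only point worth making explicit in a full write-up is that matching coefficients of powers of $\frac{c}{c\tau+d}$ is legitimate because that quantity takes infinitely many values as $\gamma$ ranges over $\SL_2(\Z)$ for fixed $\tau$, so the component functions are uniquely determined.
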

\begin{proof}
The details of the proof is omitted since it is completely identical with the proof for the holomorphic case given by \cite{Zagier08} (see also \cite[Thm 5.1.22]{CS179}).
\end{proof}
\begin{rmk}
Since there are no holomorphic quasi-modular forms of negative weights, we know a holomorphic quasi-modular form always has weight $k\geq 0$ and depth $p\leq k/2$. However, a meromorphic quasi-modular form can has arbitrary weight $k$ and depth $p$. This may be the first glimpse of how a meromorphic quasi-modular form differs from a holomorphic quasi-modular form.
\end{rmk}
Meromorphic quasi-modular forms, especially those with nonpositive weights, enjoy certain same properties like Lemma~\ref{lem:qjf} compared with holomorphic modular quasi-forms. For the reader's convenience, we will sketch the proofs and reassure the reader that some arguments of holomorphic quasi-modular forms still work here.

\section{Differential operators}\label{sec:3}

This section gives an introduction to the differential operators we will be using. 

Let $k$ and $p\geq 0$ be integers. Let $f$ be a meromorphic function on the upper-half plane. For later use, we recall that the \emph{Maass--Shimura derivative} of $f$ is 
$$
\delta_k f=Df-k\,Y\!f
$$
where $D=\frac{1}{2\pi i}\frac{d}{d\tau}$ and $Y=-\frac{1}{4\pi y}$. We recall also the \emph{Serre derivative} of $f$
$$\vartheta_{k,p}f=Df-\frac{k-p}{12}\,E_{2}f.$$

We will occasionally abbreviate by $\delta$ and $\vartheta$ in an abuse of notations. We will often use the following identities found by Ramanujan.
\[DE_2=\frac{1}{12}(E_2^2-E_4), \quad DE_4=\frac{1}{3}(E_2E_4-E_6), \quad DE_6=\frac{1}{2}(E_2E_6-E_4^2).\]

\begin{lem}\label{lem:deltadfnf}
Let $k$ be an integer and $f$ be a meromorphic function on the upper-half plane. For any $\gamma\in \SL_2(\mbr)$, we have 
\begin{align*}
    (Df)|_{k+2}\gamma&=D(f|_k\gamma)+\frac{k}{2\pi i}\frac{c}{c\tau+d}f|_k\gamma\\
    (\delta_{k} f)|_{k+2}\gamma&=\delta_{k}(f|_k\gamma).
\end{align*}
Moreover, we have the following explicit formula for $n$ times Maass--Shimura derivative.
\begin{equation}\label{eq:deltan}
    \delta_{k}^nf=\sum_{j=0}^n\binom{n}{j}(k+j)_{n-j}Y^{n-j}D^jf,
\end{equation}
where $(a)_n=a(a+1)\cdots(a+n-1)$ and for $n>0$, we set $\delta_{k}^{n}=\delta_{k+2n-2}\circ\delta_{k+2n-4}\circ\dots\circ\delta_{k}$ and $\delta_{k}^{0}$ to be the identity operator.
\end{lem}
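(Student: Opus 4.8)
The plan is to treat the lemma in two essentially independent parts. For the first two displayed identities I would compute directly with the automorphy factor. Writing $(f|_k\gamma)(\tau)=(c\tau+d)^{-k}f(\gamma\tau)$ with $\gamma\tau=\frac{a\tau+b}{c\tau+d}$ and differentiating in $\tau$ by the product rule, using $\frac{d}{d\tau}(\gamma\tau)=\frac{ad-bc}{(c\tau+d)^{2}}=(c\tau+d)^{-2}$ (where $\det\gamma=1$ is used), the term produced by $\frac{d}{d\tau}(c\tau+d)^{-k}$ is exactly $\frac{k}{2\pi i}\frac{c}{c\tau+d}(f|_k\gamma)$ while the remaining term is $(Df)|_{k+2}\gamma$; this gives the first identity. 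For the second I would also record how $Y$ transforms: from $\IM(\gamma\tau)=\IM(\tau)\,|c\tau+d|^{-2}$, the factorization $|c\tau+d|^{2}=(c\tau+d)(c\bar\tau+d)$ (using $c,d\in\mbr$), and $\tau-\bar\tau=2i\,\IM(\tau)$, a short computation shows that $(Yf)|_{k+2}\gamma$ differs from $Y(f|_k\gamma)$ by $\lambda\,\frac{c}{c\tau+d}(f|_k\gamma)$ for an explicit constant $\lambda$. Substituting both expansions into the combination defining $\delta_k$, the $\frac{c}{c\tau+d}(f|_k\gamma)$-contributions cancel, which proves $(\delta_k f)|_{k+2}\gamma=\delta_k(f|_k\gamma)$. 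All of this is an identity of meromorphic functions valid wherever both sides are defined, so meromorphy is used only to license the differentiation.

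For the explicit formula \eqref{eq:deltan} I would argue by induction on $n$, the base case $n=0$ being immediate since the sum then reduces to the single term $\binom{0}{0}(k)_0\,Y^0D^0f=f$. Using $\delta_k^{n+1}=\delta_{k+2n}\circ\delta_k^n$ and the inductive hypothesis, it is enough to evaluate $\delta_{k+2n}$ on a single monomial $Y^{n-j}D^jf$. The one elementary input is that $Dy$ is a constant, so $DY$ is a scalar multiple of $Y^2$ and hence $D(Y^{a})$ is a scalar multiple of $Y^{a+1}$; applying the Leibniz rule to $\delta_{k+2n}(Y^{n-j}D^jf)$ therefore produces precisely the two monomials $Y^{n-j}D^{j+1}f$ and $Y^{n-j+1}D^jf$, with explicit scalar coefficients. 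Substituting into $\delta_k^{n+1}f=\sum_j\binom{n}{j}(k+j)_{n-j}\,\delta_{k+2n}(Y^{n-j}D^jf)$, reindexing the first family of monomials by $j\mapsto j+1$, and reading off the coefficient of $Y^{n+1-m}D^mf$, I am reduced to checking that the resulting recursion is solved by $\binom{n+1}{m}(k+m)_{n+1-m}$. After cancelling the common Pochhammer factor this becomes an identity of the shape $\binom{n}{m-1}(k+m-1)+\binom{n}{m}(k+n+m)=\binom{n+1}{m}(k+n)$, which I would verify using Pascal's rule together with the elementary relation $m\binom{n}{m}=(n-m+1)\binom{n}{m-1}$.

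I do not expect a genuine obstacle: both parts are formal, and are in fact the same manipulations already carried out for holomorphic quasi-modular forms in the references cited above (so one could also simply appeal to those). The only point requiring care is the bookkeeping in the inductive step — matching the shifted Pochhammer symbols $(k+m-1)_{n-m+1}$ and $(k+m)_{n-m}$, and keeping every sign consistent with the chosen normalizations of $D$, $Y$ and $\delta_k$ — which is why I would split off the combinatorial identity above as a separate one-line step.
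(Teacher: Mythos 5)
Your proposal is correct and follows essentially the same route as the paper: the first two identities by direct computation with the automorphy factor (the paper dismisses these as immediate), and the explicit formula by induction on $n$, with the inductive step reducing to exactly the binomial--Pochhammer identity you isolate (the paper writes the same two-term recursion without separating it out). The only point worth flagging is that the cancellation you invoke in the second identity forces the sign convention $\delta_k f=Df+kYf$ --- the one consistent with \eqref{eq:deltan} at $n=1$ --- so your computation in effect corrects a sign typo in the paper's stated definition $\delta_k f=Df-kYf$.
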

\begin{proof}
The first and second identities are immediate calculations, so we just check the last one. 
We prove it by induction on $n$. When $n=1$, the identity $\eqref{eq:deltan}$ is just the definition. Applying Maass--Shimura derivatives on $\eqref{eq:deltan}$, we find that $\delta^{n+1}f$ equals
\begin{align*}
D(\delta^{n}f)&+(k+2n-2)Y\delta^{n}f\\
 =&\sum_{j=0}^{n+1}\bigg(\binom{n}{j-1}(k+j-1)_{n-j+1}Y^{n+1-j}D^jf\\
 &+\binom{n}{j}(k+j)_{n-j}(n+k+j)Y^{n+1-j}D^jf\bigg)\\
        =&\sum_{j=0}^{n+1}\binom{n+1}{j}(k+j)_{n+1-j}Y^{n+1-j}D^jf.
\qedhere
\end{align*}
\end{proof}
In particular, when $k\leq 0$ and $n=1-k$ the coefficients in $\eqref{eq:deltan}$ vanish for $j\neq 0$, in that case we obtain the Bol's identity (cf.~\cite{LZ01})
\begin{equation}
  \delta_{k}^{1-k}f=D^{1-k}f\quad\text{for any } f\in\mcm_{k}^{\mero}.
\end{equation}
This indicates that $D^{1-k}$ is in fact an $\SL_2(\Z)$-invariant differential operator on $\mcm_{k}^{\mero}$. This features, as we will see later, will significantly change the behaviors of meromorphic quasi-modular forms.

\begin{lem}\label{lem:dfqp}
Given a meromorphic quasi-modular form $f\in \qm_k^{\mero,\,p}$ with the component functions $f_0,f_1,\cdots,f_p$, we have
\[(Df)|_k\gamma(\tau)=\sum_{r=0}^{p+1}\left(D(f_r)+\frac{k-r+1}{2\pi i}f_{r-1}\right)\left(\frac{c}{c\tau+d}\right)^r,\]
where by convention $f_{-1}=f_{p+1}=0$.
\end{lem}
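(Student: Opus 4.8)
The plan is to differentiate the defining transformation rule \eqref{eq:slashf} for $f$ termwise and then regroup the outcome according to the powers of $\frac{c}{c\tau+d}$. Throughout, fix $\gamma=\begin{pmatrix*}a&b\\c&d\end{pmatrix*}\in\SL_2(\mbz)$ and abbreviate $g(\tau)=\frac{c}{c\tau+d}$; the one elementary fact I would record first is that $Dg=-\frac{1}{2\pi i}g^2$, which follows from $\frac{d}{d\tau}(c\tau+d)^{-1}=-c(c\tau+d)^{-2}$.

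First I would apply $D=\frac{1}{2\pi i}\frac{d}{d\tau}$ to the identity $(f|_k\gamma)(\tau)=\sum_{r=0}^p f_r(\tau)\,g(\tau)^r$, using the product rule together with the formula for $Dg$ above, to get
\[
D(f|_k\gamma)=\sum_{r=0}^p (Df_r)\,g^r-\frac{1}{2\pi i}\sum_{r=0}^p r\,f_r\,g^{r+1}.
\]
Next I would invoke the first identity of Lemma~\ref{lem:deltadfnf}, which reads $(Df)|_{k+2}\gamma=D(f|_k\gamma)+\frac{k}{2\pi i}\,g\,(f|_k\gamma)$ (the weight of $Df$ being $k+2$); the extra term contributes $\frac{1}{2\pi i}\sum_{r=0}^p k\,f_r\,g^{r+1}$, so the two sums in $g^{r+1}$ merge, giving
\[
(Df)|_{k+2}\gamma=\sum_{r=0}^p (Df_r)\,g^r+\frac{1}{2\pi i}\sum_{r=0}^p (k-r)\,f_r\,g^{r+1}.
\]

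Finally I would shift the summation index $r\mapsto r-1$ in the second sum and use the conventions $f_{-1}=f_{p+1}=0$ to extend both sums to the common range $0\leq r\leq p+1$; collecting the coefficient of $g^r$ then yields exactly $D(f_r)+\frac{k-r+1}{2\pi i}f_{r-1}$, which is the asserted formula. There is no real obstacle here: the only step requiring care is the bookkeeping of the index shift together with the boundary indices $r=0$ and $r=p+1$ (both absorbed by the stated conventions), and keeping the weight normalization of the slash operator consistent with Lemma~\ref{lem:deltadfnf}. As a byproduct the computation also shows that $Df\in\qm_{k+2}^{\mero,\,\leq p+1}$ with the component functions exhibited above.
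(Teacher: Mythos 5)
Your proof is correct and follows exactly the route the paper takes: differentiate the transformation rule \eqref{eq:slashf} using $D\bigl(\tfrac{c}{c\tau+d}\bigr)=-\tfrac{1}{2\pi i}\bigl(\tfrac{c}{c\tau+d}\bigr)^{2}$, then apply the first identity of Lemma~\ref{lem:deltadfnf} and reindex. Your explicit handling of the weight $k+2$ in the slash operator is the right reading (the $|_k$ in the lemma's statement is evidently a slip for $|_{k+2}$), and the index-shift bookkeeping is exactly what the paper leaves implicit.
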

\begin{proof}
Since $D(\frac{c}{c\tau+d})=-\frac{1}{2\pi i}(\frac{c}{c\tau+d})^2$, by applying the differential operator $D$ to both sides of $\eqref{eq:slashf}$, we get 
\[D(f|_k\gamma)=\sum_{r=0}^pD(f_r)\left(\frac{c}{c\tau+d}\right)^{r}-\frac{r}{2\pi i}f_r(\tau)\left(\frac{c}{c\tau+d}\right)^{r+1}.\]
We complete the proof by applying Lemma \ref{lem:deltadfnf}. 
\end{proof}

The following lemma implies that the differential operator $D$ usually increases the depth of a meromorphic quasi-modular form by $1$ (note that in holomorphic case it always does).  
\begin{prop}\label{prop:depthd}
The space of meromorphic quasi-modular forms $\qm^{\mero}$ is stable under the derivation $D$. It acts on $\qm^{\mero}$ by increasing the weight by $2$ and increasing the depth by at most $1$
$$D:\qm_{k}^{\mero,\,\leq p}\rightarrow\qm_{k+2}^{\mero,\,\leq p+1}.$$
More precisely, for a meromorphic modular form $f\in \qm_k^{\mero,\,p}$, we have  $Df\in \qm_{k+2}^{\mero,\,p+1}$ if $k\neq p$ and $Df\in \qm_{k+2}^{\mero,\,\leq p}$ if $k=p$.
\end{prop}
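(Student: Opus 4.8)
The plan is to read everything off the explicit formula for the component functions of $Df$ supplied by Lemma~\ref{lem:dfqp}. Write $g_r \coloneqq D(f_r) + \frac{k-r+1}{2\pi i}\,f_{r-1}$ for $0 \le r \le p+1$, with the convention $f_{-1}=f_{p+1}=0$, so that $(Df)|_{k+2}\gamma = \sum_{r=0}^{p+1} g_r(\tau)\left(\frac{c}{c\tau+d}\right)^r$ for every $\gamma\in\SL_2(\Z)$. First I would check that each $g_r$ is an admissible component function, i.e. meromorphic on $\Hcal$ and meromorphic at infinity: this is immediate because $D$ preserves meromorphy on $\Hcal$ and sends a $q$-Laurent expansion $\sum_{n\gg-\infty} a(n)q^n$ to $\sum_{n\gg-\infty} n\,a(n)q^n$, which is again of the same shape, and $g_r$ is a finite $\C$-linear combination of such functions. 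Together with the transformation law just quoted, this shows that $Df$ is a meromorphic quasi-modular form of weight $k+2$ and depth at most $p+1$; in particular $\qm^{\mero}$ is stable under the derivation $D$ and $D\colon \qm_k^{\mero,\,\leq p}\to\qm_{k+2}^{\mero,\,\leq p+1}$.

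To determine the depth precisely, I would isolate the top component. Taking $r=p+1$ and using $f_{p+1}=0$ gives $g_{p+1}=\frac{k-p}{2\pi i}\,f_p$. Since $f$ has depth exactly $p$, its component function $f_p$ is not identically zero, so $g_{p+1}\equiv 0$ if and only if $k=p$. Using that the component functions of a meromorphic quasi-modular form are uniquely determined — the functions $\tau\mapsto (c/(c\tau+d))^r$ attached to sufficiently many $\gamma$ admit no nontrivial $\C$-linear relation, so no reindexing can lower the top nonzero index — I then conclude: if $k\neq p$ then $g_{p+1}\neq 0$ while $g_r=0$ for $r>p+1$, hence $Df\in\qm_{k+2}^{\mero,\,p+1}$; and if $k=p$ then $g_{p+1}\equiv 0$, hence $Df\in\qm_{k+2}^{\mero,\,\leq p}$.

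I do not expect a genuine obstacle here: the entire content of the proposition is packaged in the coefficient $k-r+1$ of Lemma~\ref{lem:dfqp}, evaluated at $r=p+1$. The point worth flagging is conceptual rather than computational — the vanishing of this coefficient exactly when $k=p$ is the quasi-modular shadow of Bol's identity recorded after Lemma~\ref{lem:deltadfnf}: it is precisely the weight–depth coincidence that makes $D$ (and, iterated, $D^{1-k}$) behave $\SL_2(\Z)$-equivariantly on weight-$k$ forms, which is what stops the depth from increasing in that borderline case.
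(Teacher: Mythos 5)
Your argument is correct and is essentially identical to the paper's proof: both read the statement off Lemma~\ref{lem:dfqp}, noting that the top component of $Df$ is $Q_{p+1}(Df)=\frac{k-p}{2\pi i}Q_p(f)$, which vanishes exactly when $k=p$. Your additional checks (meromorphy of the new components, uniqueness of the component functions) make explicit what the paper leaves implicit, but the route is the same.
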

\begin{proof}
This follows immediately from Lemma~\ref{lem:dfqp}. Note that one has
$$Q_{p+1}(Df)=\f{k-p}{2\pi i}Q_{p}(f).$$
So $Df$ will have exactly depth $p+1$ unless $k=p$.
\end{proof}

The Serre derivative, however, usually preserves the depth of a quasi-modular form. We now state an analogous result.
\begin{prop}
The space of meromorphic quasi-modular forms $\qm^{\mero}$ is stable under the Serre derivative $\vartheta$. It acts on $\qm^{\mero}$ by increasing the weight by $2$,
$$\vartheta_{k,p}:\qm_{k}^{\mero,\,\leq p}\rightarrow\qm_{k+2}^{\mero,\,\leq p}.$$
In particular, if $f$ is a meromorphic modular form of weight $k$, then $\vartheta_{k,p} f$ is a meromorphic modular form of weight $k+2$.
\end{prop}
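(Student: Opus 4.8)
The plan is to write the Serre derivative in the explicit shape $\vartheta_{k,p}f = Df - \frac{k-p}{12}E_2 f$ and to control the transformation behaviour of the two summands separately; the point of the proof is that the constant $\frac{k-p}{12}$ is exactly the one that makes the ``new'' top term produced by $D$ cancel against the one produced by multiplication by $E_2$.

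Concretely, let $f \in \qm_k^{\mero,\,\leq p}$ with component functions $f_0,\dots,f_p$, and adopt the convention $f_{-1}=f_{p+1}=0$. By Lemma~\ref{lem:dfqp} (equivalently Proposition~\ref{prop:depthd}) we have $Df\in\qm_{k+2}^{\mero,\,\leq p+1}$ with
$$(Df)|_{k+2}\gamma = \sum_{r=0}^{p+1}\left(Df_r + \frac{k-r+1}{2\pi i}\,f_{r-1}\right)\left(\frac{c}{c\tau+d}\right)^r .$$
For the second summand I would record that $E_2$ is quasi-modular of weight $2$ and depth $1$ with components $Q_0(E_2)=E_2$ and $Q_1(E_2)=\frac{6}{\pi i}$, as read off from \eqref{eq:E2}. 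Since the slash operator is multiplicative in the weight, $(E_2 f)|_{k+2}\gamma = (E_2|_2\gamma)(f|_k\gamma)$, and expanding the product of the two finite polynomials in $\frac{c}{c\tau+d}$ shows $E_2 f\in\qm_{k+2}^{\mero,\,\leq p+1}$ with $Q_r(E_2 f)=E_2 f_r+\frac{6}{\pi i}f_{r-1}$. Hence $\vartheta_{k,p}f$ a priori lies in $\qm_{k+2}^{\mero,\,\leq p+1}$, and its component functions, being $\C$-linear combinations of the $f_r$ and $E_2 f_r$, are again meromorphic on $\Hcal$ and at infinity, as required.

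It then remains to check that the depth-$(p+1)$ component of $\vartheta_{k,p}f$ vanishes. Only the case where $f$ has depth exactly $p$ is non-trivial (if the depth is strictly smaller, $f_p=f_{p+1}=0$ and the term is zero for free), and there, using $f_{p+1}=0$,
\begin{align*}
Q_{p+1}(\vartheta_{k,p}f) &= Q_{p+1}(Df) - \frac{k-p}{12}\,Q_{p+1}(E_2 f)\\
&= \frac{k-p}{2\pi i}f_p - \frac{k-p}{12}\cdot\frac{6}{\pi i}f_p = 0 .
\end{align*}
Therefore $\vartheta_{k,p}f\in\qm_{k+2}^{\mero,\,\leq p}$, which is the asserted mapping property (the same computation, carried out for every $r$, also yields the transformation rule for each lower component, should one want it). The final sentence of the proposition is then the special case $p=0$: a meromorphic modular form $f$ of weight $k$ has depth $0$, so $\vartheta_{k,0}f=Df-\frac{k}{12}E_2 f\in\qm_{k+2}^{\mero,\,\leq 0}=\mcm_{k+2}^{\mero}$.

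The step I expect to require any care is the bookkeeping in expanding $(E_2|_2\gamma)(f|_k\gamma)$ and isolating the coefficient of $\left(\frac{c}{c\tau+d}\right)^{p+1}$: one must keep the constant $Q_1(E_2)=\frac{6}{\pi i}$ and the normalization $\frac{k-p}{12}$ straight so that the cancellation above is exact. I would stress that this cancellation is the whole content of the statement, since it is precisely what distinguishes $\vartheta_{k,p}$ (depth-preserving) from $D$ (depth-raising, by Proposition~\ref{prop:depthd}).
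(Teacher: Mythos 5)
Your argument is correct and follows essentially the same route as the paper: both use Lemma~\ref{lem:dfqp} to get $Q_{p+1}(Df)=\frac{k-p}{2\pi i}Q_p(f)$ and the transformation \eqref{eq:E2} of $E_2$ to get $Q_{p+1}(E_2 f)=\frac{6}{\pi i}Q_p(f)$, and then observe the exact cancellation in $Q_{p+1}(\vartheta_{k,p}f)$. Your write-up merely spells out the intermediate bookkeeping (the full component expansion of $E_2 f$) that the paper leaves implicit.
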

\begin{proof}
Using the modular transformation of $E_2$ in equation~\eqref{eq:E2} and modular transformation of $Df$ in Lemma~\ref{lem:dfqp},
we get
$$Q_{p+1}(Df)=\frac{k-p}{2\pi i}Q_{p}(f),\quad Q_{p+1}(E_2 f)=\frac{6}{\pi i}Q_{p}(f).$$
Hence we always have $Q_{p+1}(\vartheta_{k,p} f)=0$. Thus $\vartheta_{k,p} f$ has at most depth $p$.
\end{proof}

\begin{lem}\label{lem:depthmodular}
Let $f\in \mcm_k^{\mero}$ be a meromorphic modular form of weight $k$. Then

\begin{enumerate}[label=(\roman*)]
    \item If $k>0$, we have $D^pf\in \qm_{k+2p}^{\mero,\,p}$,
    \item If $k\leq 0$ and $p\leq -k$, then we have $D^pf\in \qm_{k+2p}^{\mero,\,p}$,
   \item If $k\leq 0$ and $p\geq 1-k$, then we have $D^pf\in \qm_{k+2p}^{\mero,\,p+k-1}$.
\end{enumerate}
\end{lem}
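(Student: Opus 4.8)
The plan is to iterate Proposition~\ref{prop:depthd} and keep careful track of when the exceptional case $k = p$ (where the depth fails to increase) actually occurs. Recall that for $g \in \qm_m^{\mero,\,q}$ we have $Dg \in \qm_{m+2}^{\mero,\,q+1}$ unless $m = q$, in which case $Dg \in \qm_{m+2}^{\mero,\,\leq q}$; more precisely $Q_{q+1}(Dg) = \frac{m-q}{2\pi i}Q_q(g)$. We start with $f \in \mcm_k^{\mero} = \qm_k^{\mero,\,0}$ and apply $D$ repeatedly, asking at each stage whether the current weight equals the current depth.

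For part (i), suppose $k > 0$. After applying $D$ a total of $j$ times (for $0 \leq j < p$) without ever hitting the exceptional case, we have $D^j f \in \qm_{k+2j}^{\mero,\,j}$; here the weight is $k + 2j$ and the depth is $j$, and since $k > 0$ we have $k + 2j > j$, so the exceptional case never arises and the induction goes through: $D^{j+1}f \in \qm_{k+2(j+1)}^{\mero,\,j+1}$. Hence $D^p f \in \qm_{k+2p}^{\mero,\,p}$. The same computation handles part (ii): if $k \leq 0$ and we have reached $D^j f \in \qm_{k+2j}^{\mero,\,j}$ with $j \leq -k$, the weight $k + 2j$ equals the depth $j$ exactly when $k + 2j = j$, i.e. $j = -k$; so as long as $j < -k$ the depth strictly increases, and for $p \leq -k$ we get $D^p f \in \qm_{k+2p}^{\mero,\,p}$ with no exceptional steps (note that when $p = -k$ the final step lands us at weight $-k$, depth $-k$, still matching the claimed $\qm_{k+2p}^{\mero,\,p}$).

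For part (iii), with $k \leq 0$ and $p \geq 1 - k$: by part (ii) we have $D^{-k}f \in \qm_{-k}^{\mero,\,-k}$, a form of weight $-k$ and depth $-k$, so here weight equals depth and the exceptional case triggers. By Proposition~\ref{prop:depthd}, $D^{-k+1}f = D(D^{-k}f) \in \qm_{-k+2}^{\mero,\,\leq -k}$. Now I must argue the depth does not drop below $-k$, and in fact equals $-k$: this is where Bol's identity $\delta_k^{1-k} = D^{1-k}$ on $\mcm_k^{\mero}$ enters. Using the explicit formula \eqref{eq:deltan} for $\delta_k^{1-k}f$ and the known formula $Y = -\frac{1}{4\pi y}$, one identifies the leading $Y$-power term, which translates into the component function $Q_{-k}(D^{1-k}f)$; because $f$ is genuinely modular (depth $0$) this leading term is a nonzero multiple of $f$ itself, so $Q_{-k}(D^{1-k}f) \neq 0$ and the depth is exactly $-k = (1-k) + k - 1$, matching the claim for $p = 1-k$. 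For $p > 1-k$ one continues applying $D$: starting from $D^{1-k}f$ of weight $2 - k$ and depth $-k$, each further application is a non-exceptional step (weight $2-k+2i$ exceeds depth $-k+i$ once $i \geq 1$, since $2 - k + 2i > -k + i \iff i > -2$, always true), so the depth increases by $1$ with each derivative and $D^p f \in \qm_{k+2p}^{\mero,\,p+k-1}$.

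The main obstacle is part (iii), specifically showing the depth is \emph{exactly} $p + k - 1$ and not smaller: Proposition~\ref{prop:depthd} alone only gives the inequality $\leq$ at the one exceptional step, and to pin down the precise drop one needs Bol's identity together with a direct inspection of the component functions of $D^{1-k}f$ via \eqref{eq:deltan}, checking that the relevant coefficient does not vanish for a genuine (depth-zero) modular form $f$. Everything else is a bookkeeping induction on the number of times $D$ has been applied.
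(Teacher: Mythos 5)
Your treatment of parts (i) and (ii) is correct and matches the paper's argument in substance: iterating the relation $Q_{j+1}(D^{j+1}f)=\frac{k+j}{2\pi i}Q_{j}(D^{j}f)$ is exactly how the paper arrives at its closed form $Q_{p}(D^{p}f)=\frac{p!}{(2\pi i)^p}\binom{k+p-1}{p}f$, and your case analysis of when the factor $k+j$ vanishes is the same as observing when that binomial coefficient is nonzero.

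Part (iii), however, contains a genuine error. You claim that after the exceptional step the form $D^{1-k}f$ has depth exactly $-k$, with $Q_{-k}(D^{1-k}f)$ a nonzero multiple of $f$ coming from the leading $Y$-power in \eqref{eq:deltan}. This is backwards: the content of Bol's identity is precisely that every $Y$-power term of $\delta_k^{1-k}f$ with $j<1-k$ vanishes, because each coefficient $(k+j)_{1-k-j}=(k+j)(k+j+1)\cdots 0$ contains the factor $0$; in particular the leading coefficient $(k)_{1-k}$ is zero, not nonzero. Consequently $D^{1-k}f=\delta_k^{1-k}f$ is a genuine meromorphic modular form of weight $2-k$, i.e. it has depth $0$, which is exactly what the claimed value $p+k-1=0$ at $p=1-k$ says; your identity ``$-k=(1-k)+k-1$'' equates $-k$ with $0$ and only holds when $k=0$. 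Your bookkeeping for $p>1-k$ is also internally inconsistent: starting from depth $-k$ and adding $1$ for each of the remaining $p-(1-k)$ derivatives would give depth $p-1$, not the $p+k-1$ you assert. The correct route, which is the paper's, is to invoke Bol's identity to conclude $D^{1-k}f\in\mcm_{2-k}^{\mero}$ with $2-k>0$, and then apply part (i) to this form with $p+k-1$ further derivatives, giving $D^{p}f=D^{p+k-1}(D^{1-k}f)\in\qm_{k+2p}^{\mero,\,p+k-1}$.
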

\begin{proof}
Applying iteratively Lemma \ref{lem:dfqp}, we obtain
\begin{equation}\label{eq:qpdpf}
Q_{p}(D^{p}f)=\frac{p!}{(2\pi i)^p}\binom{k+p-1}{p}\,f,
\end{equation}
which is always nonvanishing when $k>0$ or $p+k\leq 0$. This proves the assertions $(\rom{1})$ and $(\rom{2})$.

For the third one, keep in mind that $D^{1-k}f$ is a meromorphic modular form of weight $2-k$ by Bol's identity. Therefore by the assertion $(\rom{1})$, we have
\[
D^pf=D^{p+k-1}(D^{1-k}f)\in \qm_{k+2p}^{\mero,\,p+k-1}.
\qedhere
\]
\end{proof}

\begin{lem}
Let $p$ be a nonnegative integer. Then the sequence
\begin{equation*}
0 \longrightarrow \delta_{p=0}\,\C[\Delta^{\pm}] \longrightarrow \qm^{\mero,\,\leq p} \stackrel{\vartheta}{\longrightarrow} \qm^{\mero,\,\leq p}
\end{equation*}
of graded  $\C$-vector space is left exact where $\delta_{p=0}=1$ when $p=0$ and equals to $0$ when $p\neq 0$.
\end{lem}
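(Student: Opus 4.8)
The strategy is to compute $\ker\vartheta$ directly: since the first arrow is the canonical inclusion, hence injective, left-exactness of the sequence is equivalent to the statement that $\ker\vartheta=\C[\Delta^{\pm}]$ when $p=0$ and $\ker\vartheta=0$ when $p\geq 1$.

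On the weight-$k$ graded piece the operator is $\vartheta_{k,p}$, so $f\in\ker\vartheta$ means precisely $Df=\tfrac{k-p}{12}E_2 f$. The plan is to compare this with the Ramanujan identity $D\Delta=E_2\Delta$ (obtained by differentiating $\Delta=\tfrac{1}{1728}(E_4^3-E_6^2)$ and using the three formulas for $DE_2,DE_4,DE_6$). Setting $m:=\tfrac{k-p}{12}$ and $g:=f\Delta^{-m}$, one computes $Dg=\Delta^{-m}\bigl(Df-mE_2 f\bigr)=0$; since a meromorphic function on the connected domain $\Hcal$ with vanishing derivative is constant, this gives $f=c\,\Delta^{m}$ for some $c\in\C$, with $c\neq 0$ if $f\neq 0$. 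But $c\,\Delta^{m}$ is a meromorphic modular form of weight $12m=k-p$, while $f$ has weight $k$; as a nonzero (quasi-)modular form has a well-defined weight, this forces $p=0$. In particular $\ker\vartheta=0$ for every $p\geq 1$.

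The step that needs care — and the only one — is the legitimacy of $\Delta^{-m}$, i.e. that $m=\tfrac{k-p}{12}\in\Z$. This is where the Fourier expansion enters: writing $f=\sum_{n\geq n_0}a_f(n)q^n$ with $a_f(n_0)\neq0$ (possible since $f$ is meromorphic at infinity and nonzero), the $q^{n_0}$-coefficient of $Df$ is $n_0\,a_f(n_0)$, while that of $E_2 f$ is $a_f(n_0)$ because $E_2$ has constant term $1$; comparing them in $Df=\tfrac{k-p}{12}E_2 f$ yields $n_0=\tfrac{k-p}{12}$, so $m=n_0\in\Z$ as required.

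Finally, for $p=0$ we have $\vartheta=\vartheta_{k,0}$, and the argument above shows that any nonzero $f$ of weight $k$ in the kernel equals $c\,\Delta^{k/12}$, the Fourier computation forcing $12\mid k$ so that $\Delta^{k/12}\in\mcm_k^{\mero}$. Conversely, for each $n\in\Z$ one checks $\vartheta_{12n,0}(\Delta^n)=D(\Delta^n)-nE_2\Delta^n=nE_2\Delta^n-nE_2\Delta^n=0$, whence $\ker\vartheta=\bigoplus_{n\in\Z}\C\,\Delta^n=\C[\Delta^{\pm}]$. Together with the $p\geq1$ case and the injectivity of the inclusion this gives the claimed left-exactness. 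I expect no real difficulty beyond the Fourier-coefficient comparison that secures $m\in\Z$; the remaining steps are routine manipulations with the transformation law and the ring structure of $\qm^{\mero}$.
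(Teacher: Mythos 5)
Your proof is correct and takes essentially the same route as the paper: the kernel condition $12\,Df/f=(k-p)\,D\Delta/\Delta$ (equivalently $D(f\Delta^{-(k-p)/12})=0$) forces $f$ to be a constant multiple of a power of $\Delta$, whence $p=0$ and $k\in 12\Z$. The only difference is that you make explicit two details the paper leaves implicit, namely the integrality of the exponent $(k-p)/12$ via the leading Fourier coefficient and the weight comparison that forces $p=0$.
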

\begin{proof}
Let $f\in\qm_{k}^{\mero,\,p}$ be a meromorphic quasi-modular form in the kernel of the Serre derivative $\vartheta$. As $E_2= D\Delta/\Delta$, we have the identity of logarithmic derivative  $12Df/f=(k-p) D\Delta/\Delta$.  Therefore $f$ is a power of $\Delta$ with $k\in 12\Z$ and $p=0$.
\end{proof}

\section{Structure of meromorphic quasi-modular forms}\label{sec:4}
In this section, we study the structure of the graded ring of meromorphic quasi-modular forms.

Throughout this section, we set $*\in\lbrace\mero,!\rbrace$. The $\C$-algebra of meromorphic quasi-modular forms is graded by the weight $k$ and filtered by the depth $p$
$$\qm^{*}=\bigoplus_{k\in\Z}\bigcup_{p\in\N}\qm_{k}^{*,\,\leq p}.$$
\begin{lem}\label{lem:exact}
Let $k$ and $p$ be integers with $p\geq 0$. Then we have the following split exact sequence
\begin{equation*}
0 \longrightarrow \qm_k^{*,\,\leq p-1} \longrightarrow \qm_k^{*,\,\leq p} \stackrel{Q_{p}}{\longrightarrow} \mcm_{k-2p}^{*} \longrightarrow 0.
\end{equation*}
\end{lem}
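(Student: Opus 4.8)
The plan is to produce an explicit splitting of the sequence, since exactness then follows almost for free. First I would check that $Q_p$ really does land in $\mcm_{k-2p}^{*}$: this is exactly the last sentence of Lemma~\ref{lem:qjf}, which says that the top component $f_p$ of a quasi-modular form of weight $k$ and depth $\le p$ transforms as an honest modular form of weight $k-2p$ (and is meromorphic, resp. weakly holomorphic, at infinity since each $f_i$ is). The kernel of $Q_p$ is by definition the set of $f\in\qm_k^{*,\,\le p}$ whose $p$-th component vanishes, i.e. those of depth $\le p-1$; so $\ker Q_p=\qm_k^{*,\,\le p-1}$ and the left two-thirds of the sequence are exact tautologically. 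The only real content is surjectivity of $Q_p$, and I would get this by constructing a section.

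The natural candidate for a section $\mcm_{k-2p}^{*}\to\qm_k^{*,\,\le p}$ uses the differential operator $D$. By Proposition~\ref{prop:depthd} applied iteratively (equivalently Lemma~\ref{lem:depthmodular}), for a meromorphic modular form $h$ of weight $k-2p$ the iterate $D^p h$ lies in $\qm_{k}^{\mero,\,\le p}$, and by formula~\eqref{eq:qpdpf} its top component is
\[
Q_p(D^p h)=\frac{p!}{(2\pi i)^p}\binom{k-p-1}{p}\,h .
\]
The subtlety — and this is the step I expect to be the main obstacle — is that the binomial coefficient $\binom{k-p-1}{p}$ can vanish, namely exactly when the weight $k-2p$ of $h$ is nonpositive and $p\ge 1-(k-2p)$, i.e. in the Bol range where $D^{1-w}$ kills the modular transformation behaviour. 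In that degenerate case $D^p h$ has depth strictly smaller than $p$ and is useless as a section. To repair this I would, in the problematic weights, instead build the section out of $E_2$: the form $E_2^{p}h$ has weight $k$, depth exactly $p$ (one sees inductively from \eqref{eq:E2} that $Q_p(E_2^p h)=(6/\pi i)^p h\neq 0$, since multiplying by $E_2$ raises the depth by one with top component $(6/\pi i)$ times the previous top component, by the computation in the proof of the previous proposition), and $E_2^p h\in\qm_k^{*,\,\le p}$ for $*\in\{\mero,!\}$ because $E_2$ is a holomorphic quasi-modular form. Rescaling gives a linear section $s(h)=(\pi i/6)^p E_2^p h$ with $Q_p\circ s=\mathrm{id}$, valid in all weights; one could equally use $D^p h$ in the generic weights and only fall back on $E_2^p$ in the Bol range. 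Either way the map is visibly $\C$-linear, which is all that is needed.

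Granting the section, the sequence splits: $\qm_k^{*,\,\le p}=\qm_k^{*,\,\le p-1}\oplus s(\mcm_{k-2p}^{*})$, and surjectivity of $Q_p$ is immediate from $Q_p\circ s=\mathrm{id}$. I would close by remarking that this is compatible with the weight grading and depth filtration, so one obtains the split exact sequence as stated; iterating it over $p$ is what will ultimately feed into the decomposition of Theorem~\ref{thm:decomposition}. The one place to be careful in the write-up is to state cleanly that for $*=\mero$ or $*=!$ the forms $E_2^p h$ (or $D^p h$) stay in the right space — meromorphic resp. weakly holomorphic, at infinity — which is clear since $E_2$ and $D$ preserve both conditions, but it should be said explicitly because the holomorphic case $*=-$ is genuinely different (there $\mcm_{k-2p}=0$ unless $k-2p\ge 0$, which is why the holomorphic structure theorem has the depth bound $p\le k/2$).
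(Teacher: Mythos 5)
Your proposal is correct and, after the detour through $D^p h$ (which you rightly discard in the Bol range), lands on exactly the paper's argument: exactness on the left is tautological, $Q_p(f)\in\mcm_{k-2p}^{*}$ by Lemma~\ref{lem:qjf}, and the section is $h\mapsto(\pi i/6)^p E_2^p h$, which is the same map the paper writes as $g\mapsto(\tfrac{2\pi i}{12})^p gE_2^p$. Nothing further is needed.
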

\begin{proof}
Let $f\in\qm_k^{*,\,\leq p}$ be a meromorphic quasi-modular form. We recall that by Lemma \ref{lem:qjf}, the last component $Q_p(f)\in\mcm_{k-2p}^{*}$ is in fact a meromorphic modular form of weight $k-2p$. Thus the above sequence is exact. From the modular transformation~\eqref{eq:E2} of $E_2$, we know that $Q_p(gE_2^{p})=\left(6/\pi i\right)^{p}g$ for any $g\in \mcm_{k-2p}^{*}$.
Thus the map $g\mapsto (\frac{2\pi i}{12})^{p}gE_2^{p}$ is a section of the map $Q_p$, so the above exact sequence is also split.
\end{proof}

\begin{thm}\label{thm:fE2}
The graded $\C$-algebra of meromorphic quasi-modular forms is generated by meromorphic modular forms and $E_2$
$$\qm^{*}=\mcm^{*}[E_2]=\bigoplus_{p\geq 0}\mcm^{*}E_2^p, \quad*\in\lbrace\mero,\,!\rbrace,$$
where the depth of a meromorphic quasi-modular forms is exactly the degree of $E_2$ within it.
\end{thm}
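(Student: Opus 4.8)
The plan is to prove both the generation statement and the directness of the sum by a single induction on the depth, using the split exact sequence of Lemma~\ref{lem:exact} as the engine. First I would observe that $E_2\in\qm_2^{\mero,1}$, so every product $gE_2^p$ with $g\in\mcm^{*}$ lies in $\qm^{*}$; since $\qm^{*}$ is a $\C$-algebra this already gives the inclusion $\mcm^{*}[E_2]\subseteq\qm^{*}$. For the reverse inclusion, I would take $f\in\qm_k^{*,\,\leq p}$ and argue by induction on $p$: the case $p=0$ is the definition $\qm_k^{*,\,\leq 0}=\mcm_k^{*}$. For the inductive step, set $g\coloneqq Q_p(f)\in\mcm_{k-2p}^{*}$, which is a genuine meromorphic modular form by Lemma~\ref{lem:qjf}. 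Then the section constructed in Lemma~\ref{lem:exact} shows that $f-(\tfrac{2\pi i}{12})^{p}gE_2^{p}$ has $Q_p$ equal to zero, hence lies in $\qm_k^{*,\,\leq p-1}$, which by the induction hypothesis is a $\C$-linear combination of $\mcm^{*}E_2^j$ for $j\leq p-1$. Adding back the $gE_2^p$ term expresses $f$ as an element of $\sum_{j=0}^p\mcm^{*}E_2^j$.

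Next I would establish that the sum $\sum_{p\geq 0}\mcm^{*}E_2^p$ is direct, equivalently that for fixed weight $k$ the sum $\bigoplus_p\mcm_{k-2p}^{*}E_2^p$ is direct and that the degree of $E_2$ records the depth. Suppose $\sum_{p=0}^{N}g_pE_2^p=0$ with $g_p\in\mcm_{k-2p}^{*}$ and $g_N\neq 0$. Applying $Q_N$ to this identity and using $Q_N(g_pE_2^p)=0$ for $p<N$ (since $g_pE_2^p$ has depth $p<N$) while $Q_N(g_NE_2^N)=(6/\pi i)^N g_N$ by the transformation rule~\eqref{eq:E2}, we get $(6/\pi i)^N g_N=0$, forcing $g_N=0$, a contradiction. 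Thus the representation is unique. The same computation simultaneously proves the final clause: if $f=\sum_{p=0}^{N}g_pE_2^p$ with $g_N\neq 0$, then $Q_N(f)=(6/\pi i)^N g_N\neq 0$ and $Q_r(f)=0$ for $r>N$, so the depth of $f$ is exactly $N$, the top degree of $E_2$ appearing.

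I expect the routine part to be the bookkeeping of weights and the verification that all the relevant maps are graded (so that the direct-sum decomposition respects the weight grading), which follows mechanically since multiplication by $E_2^p$ raises weight by $2p$ and $Q_p$ lowers it by $2p$. The only point requiring a little care is the directness argument: one must be sure that $Q_N$ genuinely annihilates every lower-depth term, which is exactly the statement that $g_pE_2^p$ has depth $p$ — itself a consequence of the directness we are proving, so the argument should be phrased as a clean induction on $N$ (or equivalently read off from Lemma~\ref{lem:exact}, whose exactness already encodes $\qm_k^{*,\leq p-1}=\ker Q_p$) rather than a circular appeal. Since Lemma~\ref{lem:exact} is already available and gives both the surjectivity of $Q_p$ onto $\mcm_{k-2p}^{*}$ and the identification of its kernel, there is in fact no serious obstacle here; the theorem is essentially a repackaging of that lemma into the language of the graded algebra $\mcm^{*}[E_2]$.
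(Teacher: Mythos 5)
Your proposal is correct and follows essentially the same route as the paper: induction on the depth, using the section $g\mapsto(\tfrac{2\pi i}{12})^{p}gE_{2}^{p}$ from Lemma~\ref{lem:exact} to strip off the top component. You are somewhat more thorough than the paper in explicitly verifying that the sum is direct and that the depth equals the $E_{2}$-degree (the paper leaves this implicit in the split exact sequence), and your resolution of the apparent circularity — noting that $Q_{N}(g_{p}E_{2}^{p})=0$ for $p<N$ only needs the upper bound on the depth of $g_{p}E_{2}^{p}$ — is sound.
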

\begin{proof}
Induction on the depth of $f\in\qm^{*}$. The statement is straightforward when the depth is $0$. As explained in Lemma~\ref{lem:exact}, we see that the form $f-(\frac{2\pi i}{12})^pQ_p(f)E_2^p$ has depth $\leq p-1$.  Therefore by induction, for any $f\in \qm_k^{*,\,p}$ there exist meromorphic modular forms $g_i$ of weight $k-2i$ such that $f=\sum_{i=0}^pg_iE_2^i$.
\end{proof}

Let $\gr_{p}\qm^{*}$ be the associated graded $\C$-algebra with respect to the depth $p$. Then $\gr_{p}\qm^{*}$ is a bigraded ring
$$\gr_{p}\qm^{*}\simeq\bigoplus_{k\in\Z}\bigoplus_{p\in\N}\mcm_{k-2p}^{*}E_2^{p}.$$
Then the induced derivative $\gr_{p} D$ on $\gr_{p}\qm^{*}$ is homogeneous, increasing the weight by $2$ and depth by $1$.

\begin{prop}\label{prop:bijd}
We have the following left exact sequence of bigraded $\C$-vector spaces
\begin{equation*}
0 \longrightarrow \bigoplus_{p\in\N} \mcm_{-p}^{*}\,E_2^{p} \longrightarrow \gr_{p}\qm^{*} \xrightarrow{\gr_{p}D} \gr_{p}\qm^{*} .
\end{equation*}
In fact, the induced derivative $\gr_p D$
\[\gr_p D:\qm_k^{*,\,\leq p}/\qm_k^{*,\,\leq p-1}\to \qm_{k+2}^{*,\,\leq p+1}/\qm_{k+2}^{*,\,\leq p},\]
is a bijection when $p\neq k$ and is a zero map if $p=k$.
\end{prop}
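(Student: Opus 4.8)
The plan is to exploit the explicit formula from Lemma~\ref{lem:dfqp} at the level of associated graded pieces. Since $\gr_p\qm^{*}$ is identified with $\bigoplus_{k,p}\mcm_{k-2p}^{*}E_2^{p}$, an element of the $k$-th weight, depth-$p$ graded piece is represented by $gE_2^{p}$ with $g\in\mcm_{k-2p}^{*}$, and its image in $\qm_k^{*,\,\leq p}/\qm_k^{*,\,\leq p-1}$ is the class of any lift. First I would compute $\gr_p D$ on such a class: by Proposition~\ref{prop:depthd} (or directly from Lemma~\ref{lem:dfqp}), for $f\in\qm_k^{*,\,p}$ one has $Q_{p+1}(Df)=\frac{k-p}{2\pi i}Q_p(f)$, so modulo depth $\leq p$ the operator $D$ sends the class of $f$ to the class represented by $\frac{k-p}{2\pi i}Q_p(f)\,E_2^{p+1}$. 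In other words, under the identification with meromorphic modular forms, $\gr_p D$ is (up to the nonzero scalar $\frac{1}{2\pi i}$) multiplication by $(k-p)$ composed with the canonical isomorphism $\mcm_{k-2p}^{*}E_2^{p}\xrightarrow{\sim}\mcm_{k-2p}^{*}E_2^{p+1}$ coming from the splitting in Lemma~\ref{lem:exact}.

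From this the dichotomy is immediate. When $p=k$, the scalar $k-p$ vanishes, so $\gr_p D$ is the zero map on that piece; this matches the kernel description, since $\mcm_{-p}^{*}E_2^{p}$ is exactly the weight-$k=p$ part appearing in $\bigoplus_{p}\mcm_{-p}^{*}E_2^{p}$ — note $k-2p=-p$ precisely when $k=p$. When $p\neq k$, the scalar $k-p$ is a nonzero complex number, and since the transition map $\mcm_{k-2p}^{*}\to\mcm_{k-2p}^{*}$, $g\mapsto (k-p)g$, is a bijection of $\C$-vector spaces, and the identifications $\qm_k^{*,\leq p}/\qm_k^{*,\leq p-1}\cong\mcm_{k-2p}^{*}$ and $\qm_{k+2}^{*,\leq p+1}/\qm_{k+2}^{*,\leq p}\cong\mcm_{k+2-2(p+1)}^{*}=\mcm_{k-2p}^{*}$ are isomorphisms (Lemma~\ref{lem:exact} again), $\gr_p D$ is a bijection. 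Assembling over all $k$ and $p$ and taking the union of the kernels over all bidegrees gives the left exact sequence, with kernel $\bigoplus_{p\in\N}\mcm_{-p}^{*}E_2^{p}$ as claimed; exactness at the first nontrivial spot is just injectivity of $\gr_p D$ off the locus $p=k$, together with the fact that on the locus $p=k$ the whole piece is killed and hence belongs to the kernel.

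I do not anticipate a genuine obstacle here; the content is entirely bookkeeping. The one point requiring a little care is making the identification of $\gr_p D$ with the scalar $(k-p)$ precise: one must check that the chosen section $g\mapsto (\tfrac{2\pi i}{12})^{p}gE_2^{p}$ of $Q_p$ is compatible with $D$ modulo lower depth, i.e. that $D$ of the section differs from the section of $Dg$-type term by something of depth $\leq p$, which follows by expanding $D(gE_2^{p})=(Dg)E_2^{p}+pg E_2^{p-1}DE_2$ and observing $(Dg)E_2^{p}$ has depth $\leq p$ while $E_2^{p-1}DE_2=\tfrac{1}{12}(E_2^{p+1}-E_4E_2^{p-1})$ contributes $\tfrac{1}{12}E_2^{p+1}$ to the top depth. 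Tracking the scalars through this computation and through the normalizations in Lemma~\ref{lem:exact} reproduces $Q_{p+1}(Df)=\frac{k-p}{2\pi i}Q_p(f)$, and the rest is formal.
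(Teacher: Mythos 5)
Your proof is correct and follows essentially the same route as the paper's: the key identity $Q_{p+1}(Df)=\tfrac{k-p}{2\pi i}\,Q_p(f)$ is exactly what the paper relies on (via Proposition~\ref{prop:depthd} for injectivity and the expansion of $D(gE_2^p)$ for surjectivity). Your packaging---identifying $\gr_p D$ with multiplication by $\tfrac{k-p}{2\pi i}$ under the isomorphisms induced by $Q_p$ and $Q_{p+1}$ from Lemma~\ref{lem:exact}---handles injectivity and surjectivity in one stroke, but the underlying computation is the same.
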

\begin{proof}
It follows from Lemma \ref{lem:depthmodular} that the map $\gr_p D$ is injective when $p\neq k$ and is a zero map when $p=k$. We next show the surjectivity of $\gr_p D$ for $p\neq k$. Using the fundamental identity $DE_2=\frac{1}{12}(E_2^2-E_4)$, we observe that for a meromorphic modular form $g$ of weight $k-2p$, 
\begin{equation}\label{eq:dfk2p}
    \begin{split}
        &D(gE_2^p)=Dg\,E_2^{p}+\frac{p}{12}g(E_2^{p+1}-E_2^{p-1}E_4)\\
    =&\vartheta g\,E_2^{p}+\frac{k-p}{12}gE_2^{p+1}-\frac{p}{12}gE_4E_2^{p-1}.
    \end{split}
\end{equation}
Here we recall that $\vartheta g=Dg-\frac{k-2p}{12}E_2g$ is the Serre derivative of $g$, which is modular of weight $k-2p+2$. Hence
\[D(gE_2^p)\in \frac{k-p}{12}gE_2^{p+1}+\qm_{k+2}^{\mero,\,\leq p}.\]
Since $k\neq p$, the map $\gr_p D$ is surjective.
\end{proof}
Likewise, we have an induced homogeneous Serre derivative $\gr_{p}\vartheta$ on $\gr_{p}\qm^{*}$, which increases the weight by $2$ and preserves the depth.
\begin{prop}
We have the following left exact sequence of bigraded $\C$-vector spaces
\begin{equation*}
0 \longrightarrow \C[\Delta^{\pm},\,E_2] \longrightarrow \gr_{p}\qm^{*} \xrightarrow{\gr_{p}\vartheta} \gr_{p}\qm^{*} .
\end{equation*}
\end{prop}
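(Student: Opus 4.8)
The plan is to compute the kernel of $\gr_p\vartheta$ by reducing everything to the kernel of the Serre derivative on ordinary meromorphic (resp.\ weakly holomorphic) modular forms, which was already determined in the last lemma of Section~\ref{sec:3}. First I would record how $\vartheta_{k,p}$ acts on a representative $gE_2^p$ of a class in the bigraded piece $\mcm_{k-2p}^{*}E_2^p$: subtracting $\frac{k-p}{12}E_2\cdot gE_2^p$ from the identity~\eqref{eq:dfk2p} established in the proof of Proposition~\ref{prop:bijd} yields
\[
\vartheta_{k,p}(gE_2^p)=\vartheta g\,E_2^{p}-\frac{p}{12}\,gE_4E_2^{p-1},
\]
where $\vartheta g=Dg-\frac{k-2p}{12}E_2g\in\mcm_{k-2p+2}^{*}$ is the usual modular Serre derivative of $g$.

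Next I would observe that the correction term $-\frac{p}{12}gE_4E_2^{p-1}$ has depth $\le p-1$, hence represents $0$ in $\gr_p\qm^{*}$, so that $\gr_p\vartheta$ carries the class of $gE_2^p$ to the class of $(\vartheta g)E_2^p$. In other words, on each bigraded piece the operator $\gr_p\vartheta$ is nothing but the classical Serre derivative applied to the modular coefficient $g$, with the power $E_2^p$ carried along formally; consequently $\ker(\gr_p\vartheta)=\bigoplus_{m\ge 0}\ker\!\big(\vartheta\colon\mcm^{*}\to\mcm^{*}\big)\,E_2^m$, and the problem collapses to the depth-zero case. For that case, if $g\in\mcm^{*}$ has weight $k$ and $\vartheta g=0$, then since $E_2=D\Delta/\Delta$ the logarithmic-derivative identity $12\,Dg/g=k\,D\Delta/\Delta$ forces $g$ to be a constant multiple of $\Delta^{k/12}$ with $k\in12\Z$, whence $\ker(\vartheta|_{\mcm^{*}})=\C[\Delta^{\pm}]$ for $*\in\{\mero,!\}$ (in the weakly holomorphic case one also uses $\C[\Delta^{\pm}]\subseteq\mcm^{!}$); this is precisely the $p=0$ instance of the lemma just cited. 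Assembling the two steps gives $\ker(\gr_p\vartheta)=\bigoplus_{m\ge 0}\C[\Delta^{\pm}]E_2^m=\C[\Delta^{\pm},E_2]$, the bigraded subalgebra spanned by the monomials $\Delta^nE_2^m$ of weight $12n+2m$ and depth $m$, which is precisely the claimed left-exactness.

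I do not expect any serious obstacle; the one point deserving a little care is the depth bookkeeping in the second step --- one must check that the term $-\frac{p}{12}gE_4E_2^{p-1}$ genuinely drops one level in the depth filtration, which is what upgrades $\gr_p\vartheta$ from merely depth-nonincreasing to depth-preserving and thereby legitimizes the reduction to modular forms. It is also worth keeping in mind that $\C[\Delta^{\pm},E_2]$ is to be read as the polynomial ring $\C[\Delta^{\pm}][E_2]$, which embeds into $\gr_p\qm^{*}$ precisely because, by Theorem~\ref{thm:fE2}, the class of $E_2$ is a free variable over $\mcm^{*}$ in the associated graded ring.
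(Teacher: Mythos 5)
Your proposal is correct and follows essentially the same route as the paper: both compute $\vartheta(gE_2^p)=\vartheta g\,E_2^p-\tfrac{p}{12}gE_4E_2^{p-1}$, note that the correction term drops in depth so that $\gr_p\vartheta$ acts as the classical Serre derivative on the modular coefficient, and then identify the kernel with $\C[\Delta^{\pm}]E_2^p$ in each depth via the logarithmic-derivative argument from the lemma at the end of \Cref{sec:3}. Your write-up merely makes explicit the reduction to the depth-zero case that the paper leaves implicit.
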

\begin{proof}
A direct computation
\begin{align*}
\vartheta(gE_2^{p})&=p\vartheta(E_2)\,E_2^{p-1}g+ \vartheta(g)E_2^{p}\\
&=-\frac{p}{12}g E_4 E_2^{p-1}+\vartheta g\,E_2^{p}
\end{align*}
yields that $\vartheta(gE_2^{p})\in\vartheta g\,E_2^{p}+\qm_{k+2}^{\mero,\,\leq p-1}$. Therefore the kernel of $\gr_{p}\vartheta$ is the graded subalgebra generated by powers of $\Delta$ and $E_2$.
\end{proof}


It is well-known that a holomorphic quasi-modular form is always a linear combination of iterated derivatives of holomorphic modular forms and $E_2$. The theorem below shows this fails for meromorphic quasi-modular forms. This is one of the major differences between the holomorphic case and meromorphic case.

\begin{thm}\label{prop:fdinfty}
Let $f\in\qm_k^{\mero,\,p}$ be a meromorphic quasi-modular form with depth $k/2\leq p<k$. Then $f$ is never a linear combination of iterated derivatives of meromorphic modular forms.
\end{thm}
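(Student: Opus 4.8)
The plan is to reduce the statement to a numerical dichotomy for the depths of iterated derivatives of a \emph{single} meromorphic modular form. For a nonzero meromorphic modular form $g$ of weight $w$ and an integer $l\geq 0$, normalize so that $D^{l}g$ has weight $k$, i.e. $w=k-2l$, and read off the depth of $D^{l}g$ from Lemma~\ref{lem:depthmodular}. There are three regimes: if $l<k/2$ (so $w>0$) then $D^{l}g$ has depth exactly $l$; if $l\geq k$ (so $w\leq 0$ and $l\leq -w$) then $D^{l}g$ has depth exactly $l$; and if $k/2\leq l\leq k-1$ (so $w\leq 0$ and $l\geq 1-w$), then by Bol's identity $D^{l}g=D^{k-l-1}(D^{1-w}g)$ with $D^{1-w}g$ meromorphic modular of weight $2-w>0$, so $D^{l}g$ has depth exactly $k-l-1$. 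The conclusion to retain is that the depth of any $D^{l}g$ of weight $k$ lies in $\{0,1,\dots,\tfrac{k}{2}-1\}$ whenever $l\leq k-1$ and lies in $\{k,k+1,\dots\}$ whenever $l\geq k$; in particular it never lies in the forbidden window $\{\tfrac{k}{2},\dots,k-1\}$, and Bol's identity is exactly what collapses the depth in the middle regime.

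Next, suppose for contradiction that $f=\sum_{i}D^{l_{i}}g_{i}$ with the $g_{i}$ meromorphic modular forms. Since $\mcm^{\mero}$ and $\qm^{\mero}$ are graded by weight and $D$ raises weight by $2$, I may project onto weight $k$ and assume every summand has weight $k$, i.e. $g_{i}$ has weight $k-2l_{i}$. Discarding the summands that vanish and collecting terms by the order of differentiation, write $f=\sum_{l}D^{l}h_{l}$ with $h_{l}\in\mcm^{\mero}_{k-2l}$ and only finitely many $h_{l}$ nonzero. Because $f$ has depth $p\geq k/2\geq 1$, we have $f\neq 0$, so $L:=\max\{\,l:h_{l}\neq 0\,\}$ is well defined.

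Finally I would split into two cases on the size of $L$. If $L\geq k$, then by the depth computation $D^{L}h_{L}$ has depth $L$ while every other nonzero summand has strictly smaller depth, so $Q_{L}(f)=Q_{L}(D^{L}h_{L})$, which equals a nonzero scalar multiple of $h_{L}$ by~\eqref{eq:qpdpf} (the binomial coefficient $\binom{k+L-1}{L}$ is nonzero since $k\geq 1$ forces $k+L-1\geq L$); hence $f$ has depth $L\geq k$, contradicting $p<k$. If instead $L\leq k-1$, then every summand $D^{l}h_{l}$ has $l\leq L\leq k-1$ and therefore depth $\leq\tfrac{k}{2}-1$, whence $Q_{r}(f)=0$ for all $r\geq k/2$ and $f$ has depth $\leq\tfrac{k}{2}-1$, contradicting $p\geq k/2$. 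In both cases we reach a contradiction, which proves the theorem.

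The main (and essentially only) point requiring care is the first step: correctly propagating the three regimes of Lemma~\ref{lem:depthmodular} under the substitution $w=k-2l$ and verifying that the middle regime, where the depth drops from $l$ to $k-l-1$, indeed lands strictly below $k/2$. Once the dichotomy ``depth $\leq\tfrac{k}{2}-1$ or depth $\geq k$'' is established, the remainder is just linearity of the component maps $Q_{r}$ together with the nonvanishing of the leading coefficient in~\eqref{eq:qpdpf}. I note in passing that this dichotomy is precisely the statement that the span of iterated derivatives of meromorphic modular forms of weight $k$ is the sum of the first and third summands in Theorem~\ref{thm:decomposition}, whose depths avoid exactly the window occupied by the middle summand.
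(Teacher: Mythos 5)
Your proof is correct and follows essentially the same route as the paper: both use Lemma~\ref{lem:depthmodular} to show that a weight-$k$ iterated derivative $D^{l}F_{k-2l}$ has depth either $\le k/2-1$ (when $l\le k-1$, Bol's identity collapsing the middle regime to $k-l-1$) or exactly $l\ge k$ (when $l\ge k$), never in the window $[k/2,k)$, and then compare depths at the maximal index of differentiation. One cosmetic slip: the coefficient from~\eqref{eq:qpdpf} applied to $h_{L}$ of weight $k-2L$ is $\binom{k-L-1}{L}$ rather than $\binom{k+L-1}{L}$; its nonvanishing for $L\ge k$ follows from $\binom{-n}{j}=(-1)^{j}\binom{n+j-1}{j}$, or simply from part (ii) of Lemma~\ref{lem:depthmodular}, which you already invoke.
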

\begin{proof}
Suppose that $f$ admits a decomposition $f=\sum_{l=0}^{l_0} D^l F_{k-2l}$, where $F_{k-2l}$ is a meromorphic modular form of weight $k-2l$. Lemma \ref{lem:depthmodular} shows that  when $l_0<k/2$ or $l_0\geq k$, the depth of $f$ is exactly $l_0$. So we are left to consider only $k/2\leq l_0<k$. If $k/2\leq l\leq l_0$, the depth of $D^l F_{k-2l}$ is $k-l-1$, which is strictly smaller than $k/2$, and if $l<k/2$ the depth of $D^l F_{k-2l}$ is $l$, also strictly smaller than $k/2$. This implies that the depth of $f$ is always $<k/2$, which leads to a contradiction.
\end{proof}

Finally, we give the proof of the decomposition of the space of meromorphic quasi-modular forms.

\begin{proof}[Proof of Theorem~\ref{thm:decomposition}]
Lemma~\ref{lem:depthmodular} indicates that each component in the first and last parts should have different depth $l\leq k/2-1$ and $l\geq k$ respectively. In the middle part, applying repeatedly Proposition~\ref{prop:depthd} we find that each component has depth $k-l-1$ for $0\leq l\leq k/2-1$. So the above sum runs through all depths and must be a direct sum of $\C$-vector spaces.  It remains to show every meromorphic quasi-modular form $f\in \qm_k^{*,\,p}$ has such decomposition. We divided the proof into three parts.
\proofpart{1}{$p\leq k/2-1$}
When $p=0$, the result is direct. For $0<p\leq k/2-1$, on account of the computation in~\eqref{eq:qpdpf} (or using Proposition~\ref{prop:bijd}), the $p$-th component function of 
\[f-\frac{(2\pi i)^p}{p!\binom{k-p-1}{p}}D^pQ_p(f)\]
is zero, so we complete the proof by induction on $p$.

\proofpart{2}{$p\geq k$}
In this case, since $\binom{k-p-1}{p}\neq 0$, the induction argument still works unless the depth goes to less than $k$. This implies that we can find $F_{k-2l}\in \mcm_{k-2l}^*$ where $l=k,\dots,p$ and $F\in \qm_k^{*,\,\leq k-1}$ such that 
\[f=D^pF_{k-2p}+\cdots+D^kF_{-k}+F.\]
So we reduce the case $p\geq k$ to the case $p<k$.

\proofpart{3}{$k/2 \leq p<k$}

We claim that for any such $f$ there exists a meromorphic quasi-modular form $h\in\bigoplus_{l=0}^{\frac{k}{2}-1}D^l\qm_{k-2l}^{*,\,k-2l-1}$ so that $f-h\in \qm_k^{*,\,\leq k/2-1}$. 
Then the proof will be converted to the first part. We will prove the claim by induction on $k+p$.

Starting from $k+p=3$, the only possible case is $k=2$, $p=1$. In this case $f\in \qm_2^{*,\,1}$, the result is direct since it corresponds to $l=0$. We assume that $k+p>3$. If $p=k-1$, then the result is also direct since $f\in \qm_k^{*,\,k-1}$.

Then we can assume that $p<k-1$. By Theorem~\ref{thm:fE2}, there exists a meromorphic modular form $F_{k-2p}$ of weight $k-2p$ such that $f=g+F_{k-2p}\,E_2^p$ for some $g\in \qm_k^{*,\,\leq p-1}$. The same calculation as in equation~\eqref{eq:dfk2p} yields that $F_{k-2p}\,E_2^p$ can be represented as the linear combination
$$
\frac{k-p-1}{12}F_{k-2p}\,E_2^p=D(F_{k-2p}\,E_2^{p-1})-\vartheta (F_{k-2p})E_2^{p-1}+\frac{p-1}{12}F_{k-2p}\,E_4E_2^{p-2}.
$$
On the right-hand side, the first term comes from the derivative of $F_{k-2p}\,E_2^{p-1}$, which has exactly weight $k-2$ and depth $p-1$. So by induction assumption, we can find a weight $k-2$ meromorphic quasi-modular form $G_{k-2}$ such that
$$F_{k-2p}\,E_2^{p-1}-G_{k-2}\in \qm_{k-2}^{*,\,\leq \frac{k-2}{2}-1}\hspace{0.5em}\text{with   }\,G_{k-2}\in\!\! \bigoplus_{l=0}^{\frac{k-2}{2}-1}D^l\qm_{k-2l-2}^{*,\,k-2l-3}.$$ 
Then by applying the operator $D$, we get 
\[D(F_{k-2p}\,E_2^{p-1})-D(G_{k-2})\in \qm_{k}^{*,\,\leq \frac{k}{2}-1},\]
where 
\[D(G_{k-2})\in \bigoplus_{l=1}^{\frac{k}{2}-1}D^l\qm_{k-2l}^{*,\,k-2l-1}.\]
Besides, the second term $\vartheta (F_{k-2p})\,E_2^{p-1}$ and the last term $F_{k-2p}\,E_4E_2^{p-2}$ on the right-hand side and and the function $g$ all have weight $k$ and depth $\leq p-1$. So we find that $$f-\frac{12}{k-p-1}D(G_{k-2})\in \qm_{k}^{*,\,\leq p-1}.$$
By induction assumption, we know there exists $H\in\bigoplus_{l=0}^{\frac{k}{2}-1}D^l\qm_{k-2l}^{*,\,k-2l-1}$ so that
$$f-\frac{12}{k-p-1}D(G_{k-2})-H\in \qm_{k}^{*,\,\leq \frac{k}{2}-1},$$
which proves the previous claim.
\end{proof}

\begin{rmks}
\begin{enumerate}[label=(\roman*)]
    \item Theorem~\ref{thm:decomposition} shows that every meromorphic quasi-modular form can be written uniquely (up to Bol's identity) as a linear combination of iterated derivatives of meromorphic modular forms and iterated derivatives of quasi-modular forms of weight $l$ and depth $l-1$.
    \item When $k\leq 0$, we get only the first part, thus every meromorphic quasi-modular form of nonpositive weight is just a linear combination of iterated derivatives of meromorphic modular forms of nonpositive weights.
    \item For holomorphic quasi-modular form this reduces to the well-known (see Zagier~\cite[Prop. 20]{Zagier08})
    $$
\qm_k=\bigg(\bigoplus_{l=0}^{\frac{k}{2}-2}D^l\mcm_{k-2l}\bigg)\bigoplus D^{\frac{k}{2}-1}\qm_{2}^{1},
$$
where the depth $p=k/2$ comes from the iterated derivatives of the Eisenstein series $E_2$ which generates $\qm_{2}^{1}$.
    \item According to the work of Paşol and Zudilin~\cite{PZ20}, it is reasonable to conjecture that all magnetic meromorphic quasi-modular forms come from iterated derivatives with $l>0$ of (quasi-)modular forms with Fourier expansion in $\Q\otimes_{\Z}\Z[[q]]$.
\end{enumerate}
\end{rmks}

\section{Rankin--Cohen brackets of meromorphic modular forms}\label{sec:5}
In this section, we introduce the Rankin--Cohen brackets of meromorphic modular forms. The Rankin--Cohen brackets of holomorphic modular forms has been studied in many literature. The reader can find details in \cite{CS179}.

We first introduce the Cohen--Kuznetsov series associated with a meromorphic modular form. For holomorphic modular forms these series were originally introduced by Cohen \cite{Coh75} and Kuznetsov \cite{Kuz75}. When $k$ is a positive integer, our series is the same as Cohen and Kuznetsov. When $k$ is a negative integer, we will define a minus series and a plus series.
\begin{defn}
Let $f$ be a meromorphic modular form of weight $k\in\mbz$. We define its \emph{Cohen--Kuznetsov series} by 
\begin{align*}
    CK_D^-(f;\tau,T)&=\sum_{n=0}^{-k}(-1)^{n+k}\frac{(-k-n)!}{n!}D^nf\,T^n,\\
    CK_D^+(f;\tau,T)&=\sum_{n\geq 1-k}\frac{D^nf}{n!(n+k-1)!}T^n.
\end{align*}
Similarly, we can define $CK_{\delta}^{\pm}(f;\tau,T)$ by replacing $D$ by $\delta$. When $k>0$, by convention, $CK^-=0$, and $CK^+$ starts from the term $n=0$. Moreover, we define the slash operator on $CK^{\pm}$ by 
\[(CK^{\pm}|_k\gamma)(f;\tau,T)=(c\tau+d)^{-k}CK^{\pm}\left(f;\frac{a\tau+b}{c\tau+d},\frac{T}{(c\tau+d)^2}\right)\]
where $\gamma\in\SL_2(\mbr)$.
\end{defn}

We first give some basic properties of Cohen--Kuznetsov series. When $f$ has positive weight, similar results can also be found in \cite{CS179}. 
\begin{prop}\label{prop:ckplus}
Let $f$ be a meromorphic function of weight $k$. Suppose $k$ is a nonpositive integer. Then
\begin{enumerate}[label=(\roman*)]
    \item We have 
    \[CK_{*}^+(f;\tau,T)=T^{1-k}CK_{*}^+(D^{1-k}f;\tau,T),\]
    where $*$ denotes the operator $D$ or $\delta$.
    \item The functions $CK_{\delta}^+$ and $CK_D^+$ are linked by
    \[CK_{\delta}^+(f;\tau,T)=e^{TY}CK_D^+(f;\tau,T).\]
    \item The series $CK_{\delta}^+$ and $CK_D^+$ commutes with the slash operator up to a factor. Namely, for any $\gamma\in \SL_2(\mbr)$, we have
    \begin{align*}
    (CK_{\delta}^+)|_k\gamma(f;\tau,T)&=CK_{\delta}^+(f|_k\gamma;\tau,T),\\
    (CK_{D}^+)|_k\gamma(f;\tau,T)&=e^{\frac{T}{2\pi i}\frac{ c}{c\tau+d}}CK_{D}^+(f|_k\gamma;\tau,T). 
    \end{align*}

\end{enumerate}
\end{prop}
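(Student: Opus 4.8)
The plan is to establish the three identities in order: parts (i) and (ii) are manipulations of the defining power series, and part (iii) then follows by combining them with the transformation behaviour of $Y$. For part (i) I would reindex. Since $D^{1-k}f$ is a meromorphic modular form of weight $2-k$ by Bol's identity, $CK_{*}^{+}(D^{1-k}f;\tau,T)$ is supported on exponents $n\ge k-1$, and the substitution $n\mapsto n+1-k$ turns $T^{1-k}CK_{*}^{+}(D^{1-k}f;\tau,T)$ into $\sum_{n\ge 0}\frac{*^{\,n+k-1}(D^{1-k}f)}{n!\,(n+k-1)!}T^{n}$. Reading $1/(n+k-1)!=1/\Gamma(n+k)$, which vanishes for $n\le -k$, the nominal lower bound $1-k$ in $CK_{*}^{+}(f;\tau,T)$ may likewise be lowered to $0$, so (for the exponents $n\ge 1-k$ that actually occur) the claim reduces to the operator identity $*^{\,n+k-1}(D^{1-k}f)=*^{\,n}f$. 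For $*=D$ this is the triviality $D^{n+k-1}\circ D^{1-k}=D^{n}$; for $*=\delta$ it is the composition law $\delta_{2-k}^{\,n+k-1}\circ\delta_{k}^{\,1-k}=\delta_{k}^{\,n}$ combined with Bol's identity $\delta_{k}^{\,1-k}f=D^{1-k}f$.

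For part (ii) I would compare the coefficients of $T^{N}$ on both sides. Expanding $\delta_{k}^{n}f$ via the explicit formula \eqref{eq:deltan} and writing $e^{TY}CK_{D}^{+}$ as a Cauchy product, the coefficient of $T^{N}D^{j}f$ is $\frac{\binom{N}{j}(k+j)_{N-j}}{N!\,(N+k-1)!}Y^{N-j}$ on the left and $\frac{1}{(N-j)!\,j!\,(j+k-1)!}Y^{N-j}$ on the right, so the identity amounts to the Pochhammer relation
\[
(k+j)_{N-j}\,(j+k-1)! = (N+k-1)!,
\]
which is immediate from $(k+j)_{N-j}=(k+j)(k+j+1)\cdots(k+N-1)=\Gamma(N+k)/\Gamma(k+j)$. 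As in (i), factorials of nonpositive integers are interpreted through $\Gamma$ and the corresponding terms drop out on both sides.

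For part (iii) the $\delta$-case is almost free: iterating the intertwining relation $(\delta_{k}f)|_{k+2}\gamma=\delta_{k}(f|_{k}\gamma)$ from Lemma~\ref{lem:deltadfnf} gives $(\delta_{k}^{n}f)|_{k+2n}\gamma=\delta_{k}^{n}(f|_{k}\gamma)$, and since the slash operator on $CK^{\pm}$ is defined precisely to absorb the factors $(c\tau+d)^{-k-2n}$, one obtains $(CK_{\delta}^{+}|_{k}\gamma)(f;\tau,T)=CK_{\delta}^{+}(f|_{k}\gamma;\tau,T)$. Rather than iterate the bulkier relation for $D$, I would deduce the $D$-case from (ii): from $\IM(\gamma\tau)=y/|c\tau+d|^{2}$ one computes
\[
\frac{T}{(c\tau+d)^{2}}\,Y(\gamma\tau)=TY-\frac{T}{2\pi i}\,\frac{c}{c\tau+d},
\]
so that $e^{\frac{T}{(c\tau+d)^{2}}Y(\gamma\tau)}=e^{TY}e^{-\frac{T}{2\pi i}\frac{c}{c\tau+d}}$; applying the slash operator to the identity of (ii), then invoking the $\delta$-case just proved and (ii) once more on the right-hand side, every exponential cancels except $e^{\frac{T}{2\pi i}\frac{c}{c\tau+d}}$, which is the asserted formula. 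The computations are elementary throughout; the only delicate points are keeping the summation ranges and the factorials of nonpositive arguments straight in (i)--(ii), so that the formal reindexings become genuine equalities of power series, and tracking the signs in the transformation of $Y$ (hence of the $c/(c\tau+d)$-factor) in (iii), which I expect to be the main---still routine---source of friction.
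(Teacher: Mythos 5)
Your proposal is correct and follows essentially the same route as the paper: part (i) by reindexing together with Bol's identity, part (ii) by matching coefficients against the explicit formula \eqref{eq:deltan} for $\delta_k^n$, and part (iii) by iterating the intertwining relation of Lemma~\ref{lem:deltadfnf} for the $\delta$-series and then transporting the result to the $D$-series through (ii) and the transformation $\frac{T}{(c\tau+d)^2}Y(\gamma\tau)=TY-\frac{T}{2\pi i}\frac{c}{c\tau+d}$. The only organizational difference is in (ii), where the paper first proves the identity for positive weight and then reduces the nonpositive case to it via (i), whereas you compare coefficients directly for all weights — which works, provided you note (as you do) that $(k+j)_{N-j}$ vanishes precisely when $k+j\le 0\le k+N-1$, so the degenerate terms drop out on both sides.
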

\begin{proof}
$(\rom{1})$. Since $D^{1-k}f$ is a modular form of weight $2-k>0$, one has 
\[T^{1-k}CK_D^+(D^{1-k}f;\tau,T)=\sum_{n\geq 0}\frac{D^n(D^{1-k}f)}{n!(n+1-k)!}T^{n+1-k}=CK_D^+(f;\tau,T).\]
The proof for $CK_{\delta}^+$ is similar.

$(\rom{2})$. We note that if $f$ is a meromorphic modular form of positive weight $k$, then by Lemma \ref{lem:deltadfnf}, we have 
\begin{align*}
    CK_{\delta}^+(f;\tau,T)=&\sum_{n\geq 0}\sum_{j=0}^n\frac{\binom{n}{j}(k+j)_{n-j}Y^{n-j}D^jf}{n!(n+k-1)!}T^n\\
    =&\sum_{j,\,l\geq 0}\frac{Y^lD^jf}{(k+j-1)!l!j!}T^{l+j}\\
    =&e^{YT}CK_D^+(f;\tau,T).
\end{align*}
So when the weight of $f$ is nonpositive, we can apply the result above to $D^{1-k}f$ and get
\begin{align*}
CK_{\delta}^+&(f;\tau,T)=T^{1-k}CK_{\delta}^+(D^{1-k}f;\tau,T)\\
&=T^{1-k}e^{TY}CK_{\delta}^+(D^{1-k}f;\tau,T)=e^{TY}CK_D^+(f;\tau,T).   
\end{align*}

$(\rom{3})$. Similar to the proof above, we start from the positive weight case. By Lemma \ref{lem:deltadfnf}, we have 
\[(c\tau+d)^{-2n}(\delta^nf)\left(\frac{a\tau+b}{c\tau+d}\right)=\delta^n(f|_k\gamma).\]
Then 
\begin{align*}
    &(CK_{\delta}^+)|_k\gamma(f;\tau,T)=\sum_{n\geq 0}\frac{(\delta^nf)(\frac{a\tau+b}{c\tau+d})}{n!(n+k-1)!}\frac{T^n}{(c\tau+d)^{2n+k}}\\
    =&\sum_{n\geq 0}\frac{\delta^nf(\tau)}{n!(n+k-1)!}\frac{T^n}{(c\tau+d)^k}=CK_{\delta}^+(f|_k\gamma;\tau,T).
\end{align*}
Finally, we can complete the proof by applying $(\rom{1})$
\begin{align*}
    (CK_{\delta}^+)&|_k\gamma(f;\tau,T)=(c\tau+d)^{-k}CK_{\delta}^+\left(f;\frac{a\tau+b}{c\tau+d},\frac{T}{(c\tau+d)^2}\right)\\
    =&(c\tau+d)^{k-2}CK_{\delta}^+\left(D^{1-k}f;\frac{a\tau+b}{c\tau+d},\frac{T}{(c\tau+d)^2}\right)T^{1-k}\\
    =&T^{1-k}CK_{\delta}^+((D^{1-k}f)|_{2-k}\gamma;\tau,T)=CK_{\delta}^+(f|_k\gamma;\tau,T).
\end{align*}
As for the series $CK_D^+$, we just need to apply $(\rom{2})$. 
\end{proof}

Then we focus on the minus Cohen--Kuznetsov series.

\begin{prop}\label{prop:ckminus}
Let $f$ be a meromorphic modular form of weight $k$. Suppose $k$ is a nonpositive integer. Then for the minus part of Cohen--Kuznetsov series, we have the following relations
\begin{enumerate}[label=(\roman*)]
    \item The functions $CK_{\delta}^-$ and $CK_D^-$ are linked by
    $$CK_{\delta}^-(f;\tau,T)=e^{TY}CK_D^-(f;\tau,T)+O(T^{1-k}).$$
    \item The function $CK_{\delta}^-$ also commutes with the slash operator, i.e. for any $\gamma\in \SL_2(\mbr)$, we have  
    \[(CK_{\delta}^-)|_k\gamma(f;\tau,T)=CK_{\delta}^-(f|_k\gamma;\tau,T).\]
    \item The function $CK_D^-$ has the transformation 
    \[(CK_{D}^-)|_k\gamma(f;\tau,T)=e^{\frac{T}{2\pi i}\frac{ c}{c\tau+d}}CK_{D}^-(f|_k\gamma;\tau,T)+O(T^{1-k}).\]
\end{enumerate}
\end{prop}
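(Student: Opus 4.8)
The plan is to follow the template of the proof of Proposition~\ref{prop:ckplus}: first establish (i) by a direct comparison of the coefficients of $T^n$ on the two sides, and then read off (ii) and (iii) from the clean transformation behaviour of the Maass--Shimura derivative, all the while keeping track of the truncation error $O(T^{1-k})$. For (i), I would substitute the explicit formula $\delta^n f=\sum_{j=0}^n\binom{n}{j}(k+j)_{n-j}Y^{n-j}D^jf$ of Lemma~\ref{lem:deltadfnf} into the definition of $CK_\delta^-$ and collect powers of $T$; the coefficient of $T^n$ for $0\le n\le -k$ is then $(-1)^{n+k}\frac{(-k-n)!}{n!}\sum_{j=0}^n\binom{n}{j}(k+j)_{n-j}Y^{n-j}D^jf$, while $e^{TY}CK_D^-(f;\tau,T)$ has $T^n$-coefficient $\sum_{j=0}^{n}\frac{Y^{n-j}}{(n-j)!}(-1)^{j+k}\frac{(-k-j)!}{j!}D^jf$. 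Because $k\le 0$ and $0\le j\le n\le -k$, the Pochhammer symbol factors as $(k+j)_{n-j}=(-1)^{n-j}(-k-j)!/(-k-n)!$, and once $\binom{n}{j}=n!/(j!(n-j)!)$ is inserted the two coefficients of $T^n$ become literally equal term by term in $j$. Since $CK_\delta^-(f;\tau,T)$ is a polynomial in $T$ of degree $-k$, the difference $CK_\delta^-(f;\tau,T)-e^{TY}CK_D^-(f;\tau,T)$ has vanishing coefficients of $T^0,\dots,T^{-k}$ and is thus $O(T^{1-k})$. This step uses only that $f$ is a meromorphic function of weight $k$, so it applies verbatim to $f|_k\gamma$.

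For (ii), I would iterate the second identity of Lemma~\ref{lem:deltadfnf} to obtain $(\delta^n f)(\gamma\tau)=(c\tau+d)^{k+2n}\,\delta^n(f|_k\gamma)(\tau)$ for all $\gamma\in\SL_2(\mbr)$ and all $n\ge 0$. Inserting this into $(CK_\delta^-|_k\gamma)(f;\tau,T)=(c\tau+d)^{-k}CK_\delta^-\big(f;\gamma\tau,T/(c\tau+d)^2\big)$, the substitution $T\mapsto T/(c\tau+d)^2$ cancels the factor $(c\tau+d)^{2n}$ in the $n$-th summand, leaving the overall power $(c\tau+d)^{-k}(c\tau+d)^k=1$ and hence $CK_\delta^-(f|_k\gamma;\tau,T)$. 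Unlike part~(iii) of Proposition~\ref{prop:ckplus}, no reduction to positive weight is needed here: $CK_\delta^-$ is already a finite sum and the slash operator acts compatibly on each summand.

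For (iii), I would invert (i) into $CK_D^-=e^{-TY}CK_\delta^-+O(T^{1-k})$ (multiplying an $O(T^{1-k})$ term by the power series $e^{-TY}$, which has constant term $1$, does not change its $T$-order), apply the slash operator, and use (ii). The slashed copy of $e^{-TY}$ becomes $\exp\!\big(-\tfrac{T}{(c\tau+d)^2}Y(\gamma\tau)\big)$, and invoking (i) now for $f|_k\gamma$ contributes a factor $e^{TY(\tau)}$; the combined exponent is $TY(\tau)-\tfrac{T}{(c\tau+d)^2}Y(\gamma\tau)$, which, using $\IM(\gamma\tau)=\IM(\tau)/|c\tau+d|^2$ (so $Y(\gamma\tau)=|c\tau+d|^2Y(\tau)$) and $Y(\tau)(\tau-\bar\tau)=\tfrac{1}{2\pi i}$, collapses to $\tfrac{T}{2\pi i}\tfrac{c}{c\tau+d}$. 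Together with (ii) this produces the asserted transformation for $CK_D^-$, the leftover pieces all being $O(T^{1-k})$.

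The routine ingredient is the factorial/binomial identity behind (i), which is essentially the computation already performed in the positive-weight part of Proposition~\ref{prop:ckplus}. The one genuine subtlety — and the step I expect to require the most care — is the bookkeeping of the $O(T^{1-k})$ errors through the slash operator in (iii): one must verify that replacing $T$ by $T/(c\tau+d)^2$ and multiplying by exponential series with constant term $1$ never lowers the order of vanishing in $T$, and that the exponential factor coming from the slash of $e^{-TY}$ (which itself originated from (i) applied to $f$) and the one coming from (i) applied to $f|_k\gamma$ combine to exactly the factor $e^{T(c/(c\tau+d))/(2\pi i)}$ in the statement.
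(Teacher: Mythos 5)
Your proposal is correct and follows essentially the same route as the paper: part (i) is the same computation (the paper substitutes the formula for $\delta^n f$ from Lemma~\ref{lem:deltadfnf} and reindexes via $n=j+l$, you compare coefficients of $T^n$ using $(k+j)_{n-j}=(-1)^{n-j}(-k-j)!/(-k-n)!$ — identical algebra), while for (ii) and (iii) the paper simply refers to the argument of Proposition~\ref{prop:ckplus}(iii), which is precisely the direct slash computation and the $e^{TY}$-conjugation that you carry out explicitly. Your added care with the $O(T^{1-k})$ bookkeeping and the identity $TY(\tau)-\tfrac{T}{(c\tau+d)^2}Y(\gamma\tau)=\tfrac{T}{2\pi i}\tfrac{c}{c\tau+d}$ correctly fills in the details the paper omits.
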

\begin{proof}
$(\rom{1})$. The proof is similar to $CK_{\delta}^+$. By definition, we have
\begin{align*}
    CK_{\delta}^-(f;\tau,T)=\sum_{n=0}^{-k}\sum_{j=0}^n\binom{n}{j}(k+j)_{n-j}\frac{(-1)^{n+k}(-n-k)!}{n!}Y^{n-j}D^jf\,T^n
\end{align*}
By changing $n=j+l$, we get
\begin{align*}
    &\sum_{j=0}^{-k}\frac{(-1)^{k-j}(-k-j)!}{j!}D^jf\,T^j\sum_{l=0}^{-k-j}\frac{Y^l}{l!}T^l\\
    =&\sum_{j=0}^{-k}\frac{(-1)^{k-j}(-k-j)!}{j!}D^jf\,T^j\sum_{l=0}^{\infty}\frac{Y^l}{l!}T^l+O(T^{1-k})\\
    =&e^{TY}CK_D^-(f;\tau,T)+O(T^{1-k}).
\end{align*}
The proof of $(\rom{2})$ and $(\rom{3})$ are similar to $(\rom{3})$ in Proposition~\ref{prop:ckplus}, we omit the details here.
\end{proof}

\begin{defn}
Let $f,g$ be two meromorphic modular forms of weight $k,l$ respectively. Then the \emph{$n$-th Rankin--Cohen bracket} of $f$ and $g$ is defined by 
\[[f,g]_n\coloneqq\sum_{j=0}^n(-1)^j\binom{n+k-1}{j}\binom{n+l-1}{n-j}D^{n-j}fD^jg.\]
\end{defn}

\begin{thm}
Let $f,g$ be two meromorphic modular form of weight $k,l$ respectively. Then the $n$-th Rankin--Cohen bracket $[f,g]_n$ is a meromorphic modular form of weight $k+l+2n$. 
\end{thm}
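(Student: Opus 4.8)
The plan is to identify $[f,g]_n$ with the Rankin--Cohen bracket built from Maass--Shimura derivatives, which is visibly modular, and then transfer the conclusion back. Set
$$[f,g]_n^{\delta}:=\sum_{j=0}^n(-1)^j\binom{n+k-1}{j}\binom{n+l-1}{n-j}\delta_k^{\,n-j}f\;\delta_l^{\,j}g.$$
By Lemma~\ref{lem:deltadfnf} applied $m$ times, $(\delta_k^{\,m}f)|_{k+2m}\gamma=\delta_k^{\,m}(f|_k\gamma)=\delta_k^{\,m}f$ for every $\gamma\in\SL_2(\Z)$, and similarly for $g$; since the slash operator is multiplicative in the weight, each summand $\delta_k^{\,n-j}f\cdot\delta_l^{\,j}g$ is invariant under $|_{k+l+2n}\gamma$, so $[f,g]_n^{\delta}$ transforms like a form of weight $k+l+2n$. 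Granting the key identity $[f,g]_n=[f,g]_n^{\delta}$ (proved below), we are done: $[f,g]_n$ is manifestly meromorphic on $\Hcal$ and, being a polynomial in the $D^mf$ and $D^mg$, has a Laurent expansion at infinity; by the identity it now transforms with weight $k+l+2n$, hence lies in $\mcm_{k+l+2n}^{\mero}$.

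It remains to prove $[f,g]_n=[f,g]_n^{\delta}$, which I would do first for $k,l\geq 1$ and then extend by polynomiality. For $k,l\geq 1$ the plus Cohen--Kuznetsov series are the classical ones, and the relation $CK_{\delta}^{+}(h;\tau,T)=e^{TY}CK_D^{+}(h;\tau,T)$ for positive-weight $h$ is exactly the one established in the proof of Proposition~\ref{prop:ckplus}(ii) (cf. also \cite{CS179}). Applying it to $f$ with parameter $T$ and to $g$ with parameter $-T$, the factors $e^{TY}$ and $e^{-TY}$ cancel and we get
$$CK_{\delta}^{+}(f;\tau,T)\,CK_{\delta}^{+}(g;\tau,-T)=CK_D^{+}(f;\tau,T)\,CK_D^{+}(g;\tau,-T).$$
Expanding both sides as formal power series in $T$ and extracting the coefficient of $T^n$, a routine rearrangement of factorials identifies this coefficient as $[f,g]_n/\big((n+k-1)!\,(n+l-1)!\big)$ on the right and as $[f,g]_n^{\delta}/\big((n+k-1)!\,(n+l-1)!\big)$ on the left (the manipulation is the same for $D$ and for $\delta$), so $[f,g]_n=[f,g]_n^{\delta}$ for all positive integers $k,l$. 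To reach arbitrary $k,l\in\Z$, expand both $[f,g]_n$ and $[f,g]_n^{\delta}$ in the fixed family of functions $\{Y^aD^bf\,D^cg\}$: all coefficients are polynomials in $k,l$, because $\binom{n+k-1}{j}$ is a polynomial in $k$ and, on the $\delta$-side, so are the coefficients $(k+j)_{n-j}$ appearing in the expansion~\eqref{eq:deltan} of $\delta_k^{\,n}$. Two polynomials in $k,l$ that agree for all $k,l\in\Z_{\geq 1}$ agree identically, so $[f,g]_n=[f,g]_n^{\delta}$ for all $k,l\in\Z$.

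The main obstacle is precisely this passage to nonpositive weights. One cannot simply rerun the generating-function computation there: for $k\leq 0$ the relevant Cohen--Kuznetsov series splits as $CK^{+}+CK^{-}$, and Proposition~\ref{prop:ckminus}(i) relates $CK_{\delta}^{-}$ to $CK_D^{-}$ only up to an $O(T^{1-k})$ error, so after cancelling the $e^{\pm TY}$ factors one is left with a near-holomorphic remainder whose $T^n$-coefficients are a priori only quasi-modular, and extra work would be needed to see the $Y$-terms vanish. Routing through the positive-weight case and invoking polynomiality in $k,l$ avoids tracking these corrections altogether. (One could instead push the full series $CK_{\delta}^{+}+CK_{\delta}^{-}$ through the product and bookkeep the remainder directly, but this is more delicate.)
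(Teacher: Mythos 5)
Your argument is essentially correct, and it takes a genuinely different route from the paper. The paper never introduces $[f,g]_n^{\delta}$: it works directly with products of the series $CK_D^{+}$ and $CK_D^{-}$, whose slash-transformation laws hold only up to $O(T^{1-k})$ (Propositions~\ref{prop:ckplus} and~\ref{prop:ckminus}), reads off modularity of $[f,g]_n$ from the low-order coefficients $0\leq n\leq -l$ (resp.\ $0\leq n\leq -k$), and then handles the remaining ranges of $n$ by reindexing the bracket so that it becomes a scalar multiple of $[f,D^{1-l}g]_{n'}$, $[D^{1-k}f,g]_{n'}$ or $[D^{1-k}f,D^{1-l}g]_{n''}$ (using Bol's identity to make $D^{1-k}f$ modular of positive weight $2-k$), with one range where the bracket vanishes identically. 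Your route — prove $[f,g]_n=[f,g]_n^{\delta}$ and observe that the right-hand side is manifestly $|_{k+l+2n}$-invariant by iterating Lemma~\ref{lem:deltadfnf} — is more uniform: it avoids the case analysis on $n$ and the $O(T^{1-k})$ bookkeeping entirely, at the cost of having to justify the identity $[f,g]_n=[f,g]_n^{\delta}$ outside the classical positive-weight range.

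One step needs to be tightened. You pass from the \emph{functional} identity $[f,g]_n=[f,g]_n^{\delta}$ (valid for $k,l\geq 1$) to equality of the \emph{coefficients} of the monomials $Y^aD^bf\,D^cg$, and only then invoke polynomiality in $k,l$. Equality of functions does not by itself give equality of coefficients unless you know the family $\{Y^aD^bf\,D^cg\}$ is linearly independent for at least one pair $(f,g)$ of each positive weight pair — which is true but would need an argument (and fails, e.g., for $f=g$, where $D^bfD^cg$ is symmetric in $b,c$). The clean fix is to note that your generating-function computation is purely formal: the cancellation $e^{TY}e^{-TY}=1$ and the substitution of~\eqref{eq:deltan} take place in a polynomial ring in the commuting symbols $Y$, $D^bf$, $D^cg$, so for $k,l\geq 1$ the argument already produces the coefficient-wise identities, whose entries are polynomials in $k,l$; these then vanish identically. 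With that rephrasing the extension to all $k,l\in\Z$ is airtight. (A side remark: your argument leans on~\eqref{eq:deltan}, i.e.\ on the normalization $\delta_k=D+kY$; the displayed definition $\delta_kf=Df-kY\!f$ in \Cref{sec:3} is inconsistent with~\eqref{eq:deltan} by a sign, and it is the latter that must be used for $CK_{\delta}^{+}=e^{TY}CK_D^{+}$ and hence for your identity to hold — a typo in the paper rather than a defect of your proof.)
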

\begin{proof}
The case of $k,l>0$ is a result of Cohen \cite{CS179}. So we assume that at least one of $k,l$ is nonpositive. By symmetry $[f,g]_n=(-1)^n[g,f]_n$, we may assume that $k\geq l$. There are exactly two possibilities,  either one of $k$ and $l$ is positive or none of $k$ and $l$ is positive. For these two parts, we further separate them into the several cases depending on $n$.
\proofpart{1}{$k>0$, $l\leq 0$} 
\textit{Case 1. $0\leq n\leq -l$}. We consider the product $CK_D^+(f;\tau,T) CK_D^-(g;\tau,-T)$, it is equal to 
$$
CK_D^+(f;\tau,T) CK_D^-(g;\tau,-T)=
\sum_{n=0}^{-l}\sum_{j=0}^{n}(-1)^j A_{j}^{k,l,n} D^{n-j}fD^jg+O(T^{1-l}),
$$
where
\begin{align*}
    A_{j}^{k,l,n}&=(-1)^{l+j}\frac{(-l-j)!}{j!(n-j)!(n+k-j-1)!}\\
    &=(-1)^{n+l}\frac{(-l-n)!}{(k+n-1)!} \binom{k+n-1}{j}\binom{n+l-1}{n-j}.
\end{align*}
Here we use the identity $\binom{-n}{j}=(-1)^j\binom{n+j-1}{j}$ for $n\geq 0$. Thus we have
$$
CK_D^+(f;\tau,T) CK_D^-(g;\tau,-T)=\sum_{n=0}^{-l}\frac{(-1)^{n+l}(-l-n)!}{(k+n-1)!}[f,g]_nT^n+O(T^{1-l}).
$$
On the other hand, by Proposition \ref{prop:ckplus} and \ref{prop:ckminus}, for any $\gamma\in\SL_2(\mbz)$, we have 
\[CK_D^+(f;\tau,T)CK_D^-(g;\tau,-T)|_{k+l}\gamma=CK_D^+(f;\tau,T)CK_D^-(g;\tau,-T)+O(T^{1-l}).\]
This implies that $[f,g]_n$ is a meromorphic modular form of weight $k+l+2n$. 

\textit{Case 2. $n\geq 1-l$}. Then the terms where $j<1-l$ in the Rankin--Cohen bracket vanish since the binomial coefficients become zero. Put $n'=n+l-1$, the Rankin--Cohen  bracket of $f$ and $g$ turns out to be
\begin{equation}\label{eq:dfgn}
\begin{split}
   [f&,g]_{n}=\sum_{j=1-l}^{n}(-1)^{j}\binom{n+k-1}{n-j}\binom{n+l-1}{j}D^{n-j}fD^{j}g\\   &=\frac{\binom{2n+k+l-2}{n}}{\binom{2n+k+l-2}{n'}}\sum_{j=0}^{n'}(-1)^{l+1+j}\binom{n'+k-1}{n'-j}\binom{n'+1-l}{j}D^{n'-j}fD^j(D^{1-l}g)
\end{split}
\end{equation}
Therefore,
$$[f,g]_{n}=(-1)^{l-1}\frac{\binom{2n+k+l-2}{n}}{\binom{2n+k+l-2}{n'}}\,[f,D^{1-l}g]_{n'}.$$
Notice that $D^{1-l}g$ is a meromorphic modular form of positive weight $2-l$, so $[f,g]_n$ is a multiple of $[f,D^{1-l}g]_{n'}$, thus also a meromorphic modular form of weight $k+l+2n$.

\proofpart{2}{$k$, $l\leq 0$}
\textit{Case 1. $0\leq n\leq -k$}. we consider the product $CK_D^-(f;\tau,T)CK_D^-(g;\tau,-T)$. With the same calculation as in Part 1, the product is
\begin{align*}
 CK_D^-(f;&\tau,T)CK_D^-(g;\tau,-T)\\
    &=\sum_{m=0}^{-k}(-1)^{k+l}(-k-m)!(-l-m)![f,g]_nT^m+O(T^{1-k}). 
\end{align*}
So the Rankin--Cohen  bracket $[f,g]_n$ is again a meromorphic modular form of weight $k+l+2n$. 

\textit{Case 2. $1-k\leq n\leq -l$}. Put $n'=n+k-1$, the same calculation as in $\eqref{eq:dfgn}$ shows that 
\begin{align*}
   [f&,g]_{n}=\sum_{j=0}^{n+k-1}(-1)^j\binom{n+k-1}{j}\binom{n+l-1}{n-j}D^{n-j}fD^jg\\   &=\frac{\binom{-l}{n}}{\binom{-l}{n'}}\,\sum_{j=0}^{n'}(-1)^j\binom{n'+1-k}{j}\binom{n'+l-1}{n'-j}D^{n'-j}(D^{1-k}f)D^jg
\end{align*}
Thus
$$[f,g]_{n}=\frac{\binom{-l}{n}}{\binom{-l}{n'}}\,[D^{1-k}f,g]_{n'}$$
is a multiple of $[D^{1-k}f,g]_{n'}$, which reduces to Case 1 in  Part 1, since $D^{1-k}f$ is a meromorphic modular form of positive weight $2-k$.

\textit{Case 3. $1-l\leq n\leq 1-k-l$}. We note that $\binom{n+k-1}{j}$ is non-vanishing if and only if $j\leq n+k-1$ and $\binom{n+l-1}{n-j}$ is non-vanishing if and only if $1-l\leq j$. So when $n\leq 1-k-l$, at least one of these two binomials vanishes, this implies that the Rankin--Cohen bracket
$$
[f,g]_{n}=\sum_{j=0}^{n}(-1)^j\binom{n+k-1}{j}\binom{n+l-1}{n-j}D^{n-j}fD^jg\equiv 0
$$
is always vanishing in this case.

\textit{Case 4. $n\geq 2-k-l$}. Similar to equation $\eqref{eq:dfgn}$, letting $n''=n+k+l-2$ we find that 
\begin{align*}
   [&f,g]_{n}=\sum_{j=1-l}^{n+k-1}(-1)^j\binom{n+k-1}{j}\binom{n+l-1}{n-j}D^{n-j}fD^jg\\   &=\sum_{j=0}^{n''}(-1)^{j+l-1}\binom{n''+1-k}{j}\binom{n''+1-l}{n''-j}D^{n''-j}(D^{1-k}f)D^j(D^{1-l}g)
\end{align*} 
Thus $$[f,g]_n=(-1)^{l-1}[D^{1-k}f,D^{1-l}g]_{n''}$$
is still a meromorphic modular form of weight $k+l+2n$.
\end{proof}

The following theorem by Lanphier \cite{Lan08} and El Gradechi \cite{Gra13}, originally stated for positive weights modular forms, can be also extended to negative weights modular forms.
\begin{thm}
Let $f,g$ be two meromorphic modular forms of weight $k$, $l$ respectively. Let $n$ be a positive integer. Suppose that $k+l\geq 2$ or $k+l+2n\leq 0$. Then we have 
\begin{equation}\label{eq:dfgc}
    D^{n-i}[f,g]_i=\sum_{j=0}^nc_{i,j}^{k,l,n}(D^{n-j}f)(D^jg),
\end{equation}
and
\begin{equation}\label{eq:dfgb}
    (D^{n-i}f)(D^ig)=\sum_{j=0}^nb_{i,j}^{k,l,n}D^{n-j}[f,g]_j,
\end{equation}
where 
\[c_{i,j}^{k,l,n}=\sum_{r=0}^i(-1)^r\binom{n-i}{n-j-r}\binom{k+i-1}{i-r}\binom{l+i-1}{r},\]
and
\[b_{i,j}^{k,l,n}=\frac{\binom{n}{j}\sum_{r=0}^j(-1)^r\binom{j}{r}\binom{k+n-i-1}{n-i-r}\binom{l+i-1}{r+i-j}}{\binom{n}{i}\binom{k+l+n+j-1}{n-j}\binom{k+l+2j-2}{j}}.\]
\end{thm}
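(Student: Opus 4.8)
The plan is to reduce the identities \eqref{eq:dfgc} and \eqref{eq:dfgb} for arbitrary admissible weights to the case of positive weights, where they are already known by the work of Lanphier and El Gradechi. The reduction mechanism is Bol's identity together with the fact, established in the previous theorem, that the Rankin--Cohen bracket of meromorphic modular forms is again a meromorphic modular form.

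First I would treat the case $k+l\geq 2$. Here the argument of Lanphier--El Gradechi is essentially formal once one knows that all the objects appearing---$f$, $g$, $[f,g]_i$, and their $D$-derivatives---are meromorphic modular forms of the expected weights. Since $f$ and $g$ are meromorphic modular forms and $[f,g]_i$ is a meromorphic modular form of weight $k+l+2i$ by the theorem just proved, every term on both sides of \eqref{eq:dfgc} and \eqref{eq:dfgb} is well-defined, and the combinatorial identities among the coefficients $c_{i,j}^{k,l,n}$ and $b_{i,j}^{k,l,n}$ that underlie the proof are polynomial identities in $k$ and $l$ that hold for all integer values once they hold for sufficiently many; so the transition from positive weights to the range $k+l\geq 2$ is immediate by analytic (or rather polynomial) continuation of the coefficient identities, combined with the fact that the derivatives $D^m$ of meromorphic modular forms make perfectly good sense regardless of the sign of the weight. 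I would spell this out by noting that for fixed $n$ and $i$, equations \eqref{eq:dfgc} and \eqref{eq:dfgb}, after expanding each $[f,g]_j$ via its definition and each $D^m$ via the Leibniz rule, become identities of the form $\sum (\text{polynomial in }k,l)\cdot (D^a f)(D^b g)=0$; the functions $(D^af)(D^bg)$ for $0\le a,b$ with $a+b$ fixed are linearly independent over the field of meromorphic modular functions for generic $f,g$, so each polynomial coefficient must vanish identically, and this is exactly what Lanphier and El Gradechi prove for $k,l>0$.

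Next I would handle the case $k+l+2n\leq 0$. The key tool is Bol's identity $\delta_k^{1-k}f=D^{1-k}f$, which shows that $D^{1-k}f$ is a meromorphic modular form of weight $2-k$ whenever $f$ is a meromorphic modular form of weight $k\le 0$; more generally, iterated derivatives convert negative-weight forms into positive-weight forms after enough applications. The strategy is to choose auxiliary nonnegative integers $a,b$ with $a\ge 1-k$ and $b\ge 1-l$ (when $k$ or $l$ is nonpositive) so that $\tilde f=D^af$ has weight $k+2a>0$ and $\tilde g=D^bg$ has weight $l+2b>0$, apply the already-established identities \eqref{eq:dfgc}--\eqref{eq:dfgb} to $\tilde f,\tilde g$, and then use the relations between $[f,g]_n$ and $[\tilde f,\tilde g]_{n'}$ obtained in the previous proof---namely the formulas of the type $[f,g]_n=(-1)^{l-1}[D^{1-k}f,D^{1-l}g]_{n+k+l-2}$ from Case 4 of Part 2---to translate back. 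Under the hypothesis $k+l+2n\le 0$ all the relevant binomial coefficients behave so that these translation formulas are clean (no vanishing denominators), which is precisely why this range, rather than all of $k+l<2$, is the natural one.

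The main obstacle I anticipate is bookkeeping: matching the shifted indices $n',n'',\dots$ and the binomial prefactors that appear when passing through Bol's identity, and verifying that the resulting rational coefficients reorganize exactly into the closed forms $c_{i,j}^{k,l,n}$ and $b_{i,j}^{k,l,n}$ with the original $k,l$ substituted. This is a finite but delicate hypergeometric-type manipulation. I expect it to be routine in the sense that no new idea is needed---one only needs the polynomiality of the coefficient identities in $k,l$ and the linear independence of the monomials $(D^af)(D^bg)$---but it is the step where an error is easiest to make, so I would present it carefully, perhaps by first checking that both sides of \eqref{eq:dfgc} (resp. \eqref{eq:dfgb}) transform correctly under $\SL_2(\Z)$ as meromorphic modular forms of weight $k+l+2n$, reducing the problem to an identity of formal power series in $T$ via the Cohen--Kuznetsov generating functions, exactly as in Part 1 of the previous proof.
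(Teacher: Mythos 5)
Your first paragraph already contains the whole proof, and it is essentially the paper's proof: for fixed $i,j,n$ the coefficients $c_{i,j}^{k,l,n}$, and the numerator and denominator of $b_{i,j}^{k,l,n}$, are polynomials in $k,l$; El Gradechi's Proposition 4.6 gives the inversion relation $\sum_{r}b_{i,r}^{k,l,n}c_{r,j}^{k,l,n}=\delta_{i,j}=\sum_{r}c_{i,r}^{k,l,n}b_{r,j}^{k,l,n}$ for all positive integer weights, hence as an identity of rational functions in $(k,l)$; and therefore \eqref{eq:dfgc} (which has polynomial coefficients and so extends to all integers $k,l$) and \eqref{eq:dfgb} (obtained by inverting) persist at every integer point where the denominator $\binom{n}{i}\binom{k+l+n+j-1}{n-j}\binom{k+l+2j-2}{j}$ is nonzero. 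The \emph{only} role of the hypothesis ``$k+l\geq 2$ or $k+l+2n\leq 0$'' is to guarantee this nonvanishing: if $k+l\geq 2$ both binomials have nonnegative top at least as large as the bottom, and if $k+l+2n\leq 0$ both have strictly negative top (e.g. $k+l+n+j-1\leq j-n-1<0$), so each equals $\pm\binom{m+r-1}{r}$ with $m>0$. Your polynomial/rational continuation argument therefore covers \emph{both} cases uniformly, and no case split is needed.

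The separate treatment you propose for $k+l+2n\leq 0$ would not work as described. The translation formula you invoke, $[f,g]_n=(-1)^{l-1}[D^{1-k}f,D^{1-l}g]_{n+k+l-2}$, comes from Case 4 of Part 2 of the preceding proof and requires $n\geq 2-k-l$; but under $k+l+2n\leq 0$ with $n\geq 1$ one has $n+k+l-2\leq -n-2<0$, so the target bracket index is negative and the formula is not available (in this range one is in Cases 1--3 of that proof, and in Case 3 the bracket even vanishes identically). There is also a structural obstruction to ``translating back'': after replacing $f,g$ by $\tilde f=D^af$, $\tilde g=D^bg$ with $a\geq 1-k$, $b\geq 1-l$, the products $(D^{n'-j}\tilde f)(D^{j}\tilde g)=(D^{n'-j+a}f)(D^{j+b}g)$ only realize derivatives of $g$ of order $\geq b\geq 1$, so the term $(D^{n}f)\cdot g$ appearing in \eqref{eq:dfgb} for $i=0$ can never be reached. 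Drop the second case entirely and run the continuation argument once, checking the nonvanishing of the denominators under either hypothesis as above.
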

\begin{proof}
Proposition 4.6 in \cite{Gra13} shows that for positive weights $k$, $l$ the constants $b_{i,j}^{k,l,n}$ and $c_{i,j}^{k,l,n}$ are mutually inverse
\begin{align}\label{eq:b&c}
  \sum_{r=0}^{n}b_{i,r}^{k,l,n}c_{r,j}^{k,l,n}=\delta_{i,j}
=\sum_{r=0}^{n}c_{i,r}^{k,l,n}b_{r,j}^{k,l,n}.  
\end{align}

We note that for fixed $i$, $j$ and $n$, the coefficients $c_{i,j}^{k,l,n}$ and $b_{i,j}^{k,l,n}$ are actually polynomials in $k,l$. Since $\eqref{eq:b&c}$ holds for all positive integers $k$, $l$, it still holds for all integers $k$, $l$. As long as the denominator of $b_{i,r}^{k,l,n}$ is non-zero, the identities~\eqref{eq:dfgc} and~\eqref{eq:dfgb} remain valid.
\end{proof}

Define now the following two $\C$-subalgebras of $\qm^{*}$ with $*\in\lbrace\mero,!\rbrace$
$$\qm^{*}_{+}=\bigg(\bigoplus_{k>0}\bigoplus_{p=0}^{ \frac{k}{2}-1}\qm_{k}^{*,\,p}\bigg)\bigoplus\C,\quad
\qm^{*}_{-}=\bigoplus_{k\leq 0}\bigoplus_{p\geq 0}\qm_{k}^{*,\,p}.
$$
Then the space $\qm^{*}_{+}$ is generated by positive weight derivatives of positive weight meromorphic modular forms and the space $\qm^{*}_{-}$ is generated by nonpositive weight derivatives of nonpositive weight meromorphic quasi-modular forms. It can be deduced from Laphier--El Gradechi formula~\eqref{eq:dfgb} that any product on each space can be rewritten as linear combination of iterated derivatives of Rankin--Cohen brackets. In particular, we get

\begin{proof}{Proof of Theorem~\ref{thm:fg}}
The result follows immediately from Laphier--El Gradechi formula~\eqref{eq:dfgb}. The condition $k+l\geq 2$ or $k+l+2n\leq 0$ is automatically satisfied in $\qm_{+}^{*}$ and $\qm_{-}^{*}$ respectively.
\end{proof}

\begin{rmk}
In fact, the Rankin--Cohen brackets can be extended to meoromorphic quasi-modular forms using $D$ and $\vartheta$. Let $f\in\qm_{k}^{*,\,\leq s}$ and $g\in\qm_{l}^{*,\,\leq t}$ be two meoromorphic quasi-modular forms. Their (Serre--)Rankin--Cohen brackets can be defined as
$$[f,g]_{n}=\sum_{j=0}^n(-1)^j\binom{n+k+s-1}{j}\binom{n+l+t-1}{n-j}D^{n-j}fD^jg,$$
$$\textrm{Se}\,[f,g]_{n}=\sum_{j=0}^n(-1)^j\binom{n+k-1}{j}\binom{n+l-1}{n-j}\vartheta^{n-j}f\vartheta^jg,$$
where $[f,g]_{n}$ and $\textrm{Se}\,[f,g]_{n}$ are meromorphic quasi-modular forms in $\qm_{k+l+2n}^{*,\,\leq s+t}$.
\end{rmk}

\section{Fourier coefficients of meromorphic quasi-modular forms}\label{sec:6}

Let us denote by 
\[\mathcal{F}=\{\tau\in \mch\cup\{\infty\}\,|\,-1/2\leq \RE(\tau)<1/2\}\]
the standard fundamental domain of the translation $\tau\mapsto\tau+1$ in $\Hcal\cup\{\infty\}$. We set also the trimmed fundamental domain for $t_0>0$ by $$\mathcal{F}_{t_0}=\{\tau\in \mcf\,|\,\IM \tau\geq t_0\}.$$
Let $f$ be a meromorphic function on $\mathcal{F}$. For any pole $\alpha$ of $f$ with order $\ord_f(\alpha)$, let
 \begin{equation}
  \PP_{\infty}(f)(\tau)=\sum_{n<0}a_f(n)q^n
 \end{equation}
be the principle part of the Fourier expansion of $f$ at $\alpha=\infty$ and
\begin{equation}
    \PP_{\alpha}(f)(\tau)=\sum_{m=1}^{\ord_f(\alpha)}c_{f,\alpha}(m)\frac{1}{(\tau-\alpha)^m}
\end{equation}
be the principle part of the Laurent expansion of $f$ at $\tau=\alpha$. We put also
$$P_{\infty}(f)=\sum_{n<0}a_f(n)q^n,$$
and
$$P_{\alpha}(f)=\sum_{m=1}^{\ord_f(\alpha)}\f{(-2\pi i)^m}{(m-1)!}c_{f,\alpha}(m)\Li_{1-m}(\be(\tau-\alpha)),$$
where $\be(\tau)=e^{2\pi i\tau}$. Note that the expansion $\eqref{eq:limz}$ in Appendix shows that $P_{\alpha}(f)$ has the same principle part at $\tau=\alpha$ as $f$ does.

We have the following estimation on Fourier coefficients of meromorphic quasi-modular form
\begin{prop}
Let $f=\sum_{n\gg-\infty} a_{f}(n)q^n$ be a meromorphic quasi-modular form of weight $k$ with poles and principle parts as described above. 
Then we have
$$a_f(n)=\sum_{\substack{\alpha\in\mathcal{F}_{t_0},\\1\leq m\leq \ord_{f}(\alpha)}}\f{(-2\pi i)^m}{(m-1)!}c_{f,\alpha}(m)n^{m-1}e^{-2\pi i n\alpha}+O(e^{2\pi n t_0}).$$
\end{prop}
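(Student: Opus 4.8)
The plan is to recover $a_f(n)$ as a period integral and then to shift the contour downward past the poles of $f$, collecting their residues. First note that a meromorphic quasi-modular form is $1$-periodic: taking $\gamma$ to be the translation $\tau\mapsto\tau+1$ in \eqref{eq:slashf} forces $c=0$, so the right-hand side of \eqref{eq:slashf} reduces to $f_0=f$ and $f(\tau+1)=f(\tau)$. Hence $f$ has the Fourier expansion $f(\tau)=\sum_{n\gg-\infty}a_f(n)q^n$ valid for $\IM\tau>Y_0$, where $Y_0$ is chosen so large that $f$ has no pole with $\IM\tau>Y_0$ (possible since $f$ is meromorphic at $\infty$). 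For any fixed $A>Y_0$, integrating this series term by term over a horizontal segment of length $1$ at height $A$ (justified by uniform convergence there) yields
\[a_f(n)=\int_{-1/2+iA}^{1/2+iA}f(\tau)\,e^{-2\pi i n\tau}\,d\tau.\]

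Next I would apply the residue theorem to the counterclockwise rectangular contour $\mathcal{C}$ with vertices $\pm1/2+it_1$ and $\pm1/2+iA$, where $t_1\in(0,t_0)$ is taken close enough to $t_0$ that $f$ has no pole with imaginary part in $[t_1,t_0)$, none on the line $\IM\tau=t_1$, and none on the vertical edges $\RE\tau=\pm1/2$; the last condition can always be met by translating the strip by a generic real amount, which changes nothing because $e^{-2\pi i n\alpha}$, $c_{f,\alpha}(m)$ and $\ord_f(\alpha)$ are invariant under $\alpha\mapsto\alpha+1$. Since $f$ has only finitely many poles in the compact rectangle (here we use $t_1>0$), such a $t_1$ exists. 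On $\mathcal{C}$ the integrand $f(\tau)e^{-2\pi i n\tau}$ is $1$-periodic, so the two vertical edges cancel; the top edge contributes $-a_f(n)$, the bottom edge contributes $I:=\int_{-1/2+it_1}^{1/2+it_1}f(\tau)e^{-2\pi i n\tau}\,d\tau$, and the enclosed poles are exactly the finite poles lying in $\mathcal{F}_{t_0}$ (for $A$ large they all sit below height $A$, and there is no pole height in $(t_1,t_0)$; the pole of $f$ at $\infty$, if any, is already subsumed in the Fourier expansion and does not appear here). Therefore
\[a_f(n)=I-2\pi i\sum_{\alpha\in\mathcal{F}_{t_0}}\Res_{\tau=\alpha}\bigl(f(\tau)\,e^{-2\pi i n\tau}\bigr).\]
To evaluate the residue at a pole $\alpha$ one may replace $f$ by its principal part $\PP_\alpha(f)(\tau)=\sum_{m=1}^{\ord_f(\alpha)}c_{f,\alpha}(m)(\tau-\alpha)^{-m}$, since $f-\PP_\alpha(f)$ is holomorphic at $\alpha$; differentiating the exponential $m-1$ times gives $\Res_{\tau=\alpha}\bigl((\tau-\alpha)^{-m}e^{-2\pi i n\tau}\bigr)=\tfrac{1}{(m-1)!}(-2\pi i n)^{m-1}e^{-2\pi i n\alpha}$, and summing over $m$ while combining the overall factor $-2\pi i$ with $(-2\pi i n)^{m-1}$ produces exactly $\sum_{m=1}^{\ord_f(\alpha)}\tfrac{(-2\pi i)^m}{(m-1)!}c_{f,\alpha}(m)\,n^{m-1}e^{-2\pi i n\alpha}$, i.e.\ the asserted main term.

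Finally one estimates $I$: on the compact segment $\{\IM\tau=t_1,\ |\RE\tau|\le1/2\}$ the function $f$ is holomorphic, hence bounded by a constant $M$ depending only on $f$ and $t_0$, while $|e^{-2\pi i n\tau}|=e^{2\pi n t_1}\le e^{2\pi n t_0}$ there, so $|I|\le M e^{2\pi n t_0}=O(e^{2\pi n t_0})$; combining this with the previous display gives the statement. I do not expect a genuine obstacle, since this is a standard application of the residue theorem; the points requiring care are entirely bookkeeping — justifying the term-by-term integration at height $A$, positioning $\mathcal{C}$ so that it avoids all poles while enclosing precisely those in $\mathcal{F}_{t_0}$, and tracking the signs and factorials so that the residue sum matches the constant $\tfrac{(-2\pi i)^m}{(m-1)!}$ in the statement.
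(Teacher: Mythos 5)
Your argument is correct: the contour placement, the cancellation of the vertical edges by periodicity, the residue computation $-2\pi i\cdot\frac{(-2\pi i n)^{m-1}}{(m-1)!}c_{f,\alpha}(m)e^{-2\pi i n\alpha}=\frac{(-2\pi i)^{m}}{(m-1)!}c_{f,\alpha}(m)n^{m-1}e^{-2\pi i n\alpha}$, and the bound $|I|\le Me^{2\pi nt_1}\le Me^{2\pi nt_0}$ (valid for $n\ge 0$, which is the relevant regime for the asymptotic) all check out, and your perturbation of the strip correctly handles poles on the vertical edges. The route is, however, genuinely different from the paper's. The paper does not shift contours at all: it subtracts from $f$ the explicit $1$-periodic functions $P_{\alpha}(f)=\sum_{m}\frac{(-2\pi i)^m}{(m-1)!}c_{f,\alpha}(m)\Li_{1-m}(\be(\tau-\alpha))$, which by the Laurent expansion \eqref{eq:limz} have the same principal part at $\alpha$ as $f$, so that $\tilde f=f-\sum_j P_{\alpha_j}(f)$ is holomorphic and bounded on $\mathcal{F}_{t_0}$; the main term is then read off as the $n$-th Fourier coefficient of the polylogarithm series $\Li_{1-m}(\be(\tau-\alpha))=\sum_{k\ge1}k^{m-1}e^{-2\pi ik\alpha}q^k$, and the error is the Fourier coefficient of $\tilde f$ estimated by integrating at height $t_0$. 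The two computations are dual — the $n$-th Fourier coefficient of $P_\alpha(f)$ is exactly $-2\pi i$ times your residue, essentially by the Lipschitz summation formula — so your proof is a more elementary and self-contained packaging that avoids the polylogarithm machinery and the appendix identity \eqref{eq:limz}. What the paper's construction buys is reuse: the same decomposition $f=\tilde f+\sum_j P_{\alpha_j}(f)$ and the functions $P_\alpha(f)$ reappear verbatim in the regularized $L$-function computation (Theorem \ref{thm:lfunction}), whereas the residue-theorem argument would have to be set up separately there.
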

\begin{proof}
 Let $\alpha_{1},\dots,\alpha_{l}$ be all the poles of $f$ in $\mathcal{F}_{t_0}$. Removing all the principle parts with all these kinds of $P_{\alpha}(f)$, we have finally a holomorphic function in $\mathcal{F}_{t_0}$
$$\tilde{f}(\tau)=f(\tau)-P_{\alpha_1}(f)-\dots-P_{\alpha_l}(f).$$
So $\tilde{f}$ is bounded in $\mcf_{t_0}$, say by $C$. Note the function $\tilde{f}(\tau)$ is holomorphic within the closure of the domain $\mathcal{F}_{t_0}$. Then using Cauchy integral formula at infinity, the $n$-th coefficient of $\tilde{f}(\tau)$ is bounded by 
\begin{equation}\label{eq:esti}
    \left|\int_{it_0}^{it_0+1}\tilde{f}(\tau)e^{-2\pi in\tau}d\tau\right|\leq e^{2\pi int_0}\int_0^1\left|\tilde{f}(t+it_0)\right|dt\leq Ce^{2\pi nt_0}.
\end{equation}
On the other hand, the $n$-th coefficient of the polylogarithm function $P_{\alpha}(f)$ is 
\[\sum_{m=1}^{\ord_f(\alpha)}\f{(-2\pi i)^m}{(m-1)!}c_{f,\alpha}(m)n^{m-1}e^{-2\pi in\alpha}.\]
Combining with the estimation $\eqref{eq:esti}$, we get the desired result.
\end{proof}

Weakly holomorphic quasi-modular forms have much less growth than meromorphic quasi-modular forms. One has
\begin{prop}
Let $f=\sum_{n\geq -n_0}a_f(n)q^n\in \qm_k^{!,\,p}$ be a weakly holomorphic quasi-modular form with $n_0>0$. Then for any $\varepsilon>0$, we have 
\[a_f(n)\ll e^{(4\pi+\varepsilon)\sqrt{n_0n}}.\]
\end{prop}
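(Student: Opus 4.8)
The plan is to bound the Fourier coefficients of a weakly holomorphic quasi-modular form $f=\sum_{n\geq -n_0}a_f(n)q^n$ by using the circle method / saddle-point estimate, exactly as one does for weakly holomorphic \emph{modular} forms, but first reducing the quasi-modular case to the modular case via the structure theory developed above. By Theorem~\ref{thm:fE2} we may write $f=\sum_{i=0}^{p}g_iE_2^i$ with each $g_i\in\mcm^{!}_{k-2i}$ a weakly holomorphic modular form. Since $E_2=1-24\sum_{n\geq1}\sigma_1(n)q^n$ has polynomial Fourier coefficients, and since a product of power series whose coefficients grow subexponentially again has subexponentially growing coefficients, it suffices to prove the bound $a_g(n)\ll e^{(4\pi+\varepsilon)\sqrt{n_0 n}}$ for a single weakly holomorphic modular form $g=\sum_{n\geq-n_0}a_g(n)q^n$ of some weight (the principal parts of the $g_i E_2^i$ all have order $\leq n_0$, so the combined polar order is still $n_0$, and the $O(\cdot)$ absorbs the finite sum over $i$).

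For a weakly holomorphic modular form $g$ of weight $w$ on $\SL_2(\Z)$, the standard argument runs as follows. Write $a_g(n)=\int_{i Y}^{iY+1} g(\tau)q^{-n}\,d\tau$ for the horocycle at height $Y$, to be optimized. Split the integral over the Farey dissection of the interval $[0,1]$; near a rational $a/c$ use the modular transformation $g|_w\gamma$ for $\gamma=\begin{pmatrix}a&b\\c&d\end{pmatrix}$ to replace $g$ by $(c\tau+d)^{w}g(\gamma\tau)$, and bound $g(\gamma\tau)$ by its principal part, whose dominant term is $a_g(-n_0)e^{-2\pi i n_0\gamma\tau}=a_g(-n_0)e^{2\pi n_0/(c^2(\text{something}))}\cdots$; the exponentially largest contribution comes from $c=1$, i.e. the cusp $0$, giving a term of size roughly $\exp\!\big(2\pi n_0/v + 2\pi n v\big)$ where $v=\IM(\gamma\tau)$ ranges over the transformed horocycle. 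Choosing $Y$ — equivalently the scale of the Farey arcs — to balance $n_0/v$ against $nv$ yields the optimum $v\sim\sqrt{n_0/n}$ and the bound $\exp(4\pi\sqrt{n_0 n})$; the contribution of all $c\geq 2$ and of the non-dominant principal-part terms is of strictly smaller exponential order and is swallowed by the $e^{\varepsilon\sqrt{n_0 n}}$ slack, as is the polynomial factor $(c\tau+d)^w$ and the number of Farey arcs.

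The main obstacle, and the only place real care is needed, is organizing the Farey/horocycle dissection so that the error terms are genuinely of order $e^{o(\sqrt{n_0 n})}$ uniformly — in particular controlling the polynomial weight factor $|c\tau+d|^{w}$ (which for negative $w$ could a priori blow up near the real axis, but is harmless because on a Farey arc of the appropriate scale one has $|c\tau+d|\gg c/Q$ with $Q$ the Farey order, contributing only a power of $n$) and summing the geometric-type series over the finitely relevant denominators $c$. Since the statement only asks for the crude bound with an $\varepsilon$ in the exponent, none of these estimates needs to be sharp; one may cite the classical treatment for weakly holomorphic modular forms and indicate that the passage through $\mcm^{!}[E_2]$ and the trivial bound $\sigma_1(n)\ll n\log n$ on the coefficients of $E_2$ changes nothing. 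I would therefore present the reduction to the modular case in detail and then either invoke the known estimate or sketch the saddle-point computation in a couple of lines.
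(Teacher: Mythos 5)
Your argument is essentially the paper's: reduce to the weakly holomorphic modular case via the decomposition $f=\sum_i g_iE_2^i$ of Theorem~\ref{thm:fE2}, note that $E_2$ has only polynomially growing coefficients, and then invoke the classical Rademacher--Zuckerman bound $a_g(n)\ll e^{(4\pi+\varepsilon)\sqrt{n_0n}}$ for weakly holomorphic modular forms (which the paper simply cites rather than re-deriving by the circle method). The additional saddle-point sketch is fine but unnecessary for the stated $\varepsilon$-weakened bound.
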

\begin{proof}
When $F$ is a weakly holomorphic modular form, then we have the following estimation~\cite{Ra38} on the Fourier coefficients of $F$
\begin{equation}\label{eq:afn}
    a_F(n)\ll e^{(4\pi+\epsilon)\sqrt{n_0n}}.
\end{equation}
Theorem~\ref{thm:fE2} shows that every weakly holomorphic quasi-modular form is of the form
\[f=F_0+F_1E_2+\cdots+F_pE_2^p.\]
Note that the order of $F_i$ at infinity is at most $n_0$ for any $i=0,1,\cdots,p$. Combining the estimation $\eqref{eq:afn}$, we get
\[a_f(n)\ll e^{(4\pi+\varepsilon)\sqrt{n_0n}},\]
since $\sigma(n)\ll n^{1+\epsilon}$ for any $\epsilon>0$. 
\end{proof}
\begin{rmk}
In fact, we can get a more accurate estimation by using the Circle Method to weakly holomorphic quasi-modular forms
\[a_f(n)\ll n^{\frac{2k-3}{4}}e^{4\pi\sqrt{n_0n}}.\]
\end{rmk}

\section{Regularized integrals of meromorphic functions}\label{sec:7}
At the start of this section, we recall the $L$-functions of modular forms. For a cusp form $f$ of weight $k$ on $\Hcal$, the completed $L$-function of $f$ is just the Mellin transform of $f$
$$\Lambda(f,s)=\int_0^{\infty}f(it)t^{s-1}dt.$$
This completed $L$-function is connected with the Dirichlet series of $f$ in the following way
$$\Lambda(f,s)=\f{\Gamma(s)}{(2\pi)^s}\sum_{n=1}^{\infty}\f{a_f(n)}{n^s}.$$

However in general, a meromorphic quasi-modular form may have exponential growth at cusps and polynomial growth at poles. To overcome this, we now construct the regularized integrals of meromorphic functions. This regularization procedure can be divided into two parts, the regularization at infinity (and hence at $0$) and the regularization at positive real numbers.

Following \cite{BDE17}, under the assumption that $f$ has at most linear exponential growth at infinity, we give the definition of regularized integral of $f$.
\begin{defn}\label{defn:infinity}
Let $f(t)$ be an analytic function with at most linear exponential growth for large $t\in\R_{>0}$. If the integral \[\int_{t_0}^{\infty}e^{-wt}f(t)dt\]
has a continuation to $w=0$, then the \emph{regularized integral} of $f$ is defined to be 
\[\int_{t_0}^{\infty,\ast}f(t)dt\coloneqq\left[\int_{t_0}^{\infty}e^{-wt}f(t)dt \right]_{w=0}.\]
We use the notation $*$ to indicate a regularized integral.
\end{defn}
Similarly, if $f(1/t)$ has at most linear exponential growth at the cusp $0$, then we can define the regularized integral of $f$ at $0$ using the reflection $t\mapsto 1/t$. This is to say,
\[\int_{0}^{t_0,\ast}f(t)dt\coloneqq\int_{t_0^{-1}}^{\infty,\ast}\frac{1}{t^2}f\left(\frac{1}{t}\right)dt.\]

For integrals of meromorphic functions near real positive poles, we will use Hadamard regularization. The idea of regularizing a divergent integral can be traced back to Cauchy. Precise definition of such regularized integrals was firstly introduced by Hadamard~\cite{Had32} in his study of Cauchy problem for differential equations of hyperbolic type. The interpretation of Hadamard regularization using meromorphic continuation was due to Riesz~\cite{Riesz38, Riesz49}. Various theories and generalization of Hadamard regularization can be found in the later literature.  Gelfand--Shilov~\cite{GS62} formalized Hadamard regularization in the framework of generalized functions.
Afterwards, the concept of generalized functions was extended to hyperfunctions by Sato~\cite{Sato59}. Due to space constraints we will neglect the technical and theoretical details in this paper. The reader can find more precise presentations in the previous mentioned articles and the books~\cite{K88, K98}.

We can use meromorphic continuation to give the following definition.
\begin{defn}\label{def:regint}
Let $f(t)$ be a meromorphic function in a neighbourhood of $[a,b]$ with only one real positive pole $a<c<b$, then the \emph{regularized integral} of $f$ from $a$ to $b$ is defined as
$$\int_{a}^{b,*}f(t)dt\coloneqq\left[\int_a^b \abs{t-c}^s f(t) dt\right]_{s=0},$$
 where the suffix indicates the constant term in the Laurent expansion of $s$ at $0$.
\end{defn}

As already mentioned, there are different approaches of Hadamard regularization. The following proposition explains why they are actually equivalent.
\begin{prop}\label{prop:positivereals}
Let $f(t)$ be a meromorphic function in a neighbourhood of $[a,b]$ with only one real positive pole $a<c<b$ of order $n$. Let $F(t)=f(t)(t-c)^n$. The the following different approaches of regularization coincides
\begin{enumerate}[label=(\roman*)]
    \item The meromorphic continuation of the integral by Riesz
    $$\left[\int_a^b \abs{t-c}^s f(t) dt\right]_{s=0}.$$
    \item The integral of $f(t)$ in the sense of Sato's hyperfunction. Equivalently, the Cauchy principal valued integral by
    $$\frac{1}{2}\biggl(\int_{C^+}+\int_{C^-}\biggr) f(t)dt,$$
    where $C^{+}$ (resp. $C^{-}$) is a path from $a$ to $b$ above (resp. below) the real axis.
    \item The Hadamard finite part integral of $f(t)$
    $$\FP_{\varepsilon=0}\, \biggl(\int_a^{c-\varepsilon}+\int_{c+\varepsilon}^b\biggr) f(t) dt,$$
    where $\,\FP$ stands for the constant term in the Laurent expansion with respect to $\varepsilon$.
    \item The following integral in the sense of pairing the Schwartz distribution $\FP\,(x-c)^{-n}$ with $F(t)$
    \begin{align*}
\Bigl(\FP\,\frac{1}{(x-c)^n},F(t)\Bigr)&\coloneqq\sum_{i=0}^{n-1}\frac{(n-i-1)!}{n!}\biggl(\frac{F^{(i)}(a)}{(a-c)^{n-i}}-\frac{F^{(i)}(b)}{(b-c)^{n-i}}\biggr)\\
    &\quad\quad+\f{1}{n!}\PV \int_{a}^{b}\frac{F^{(n)}(t)}{t-c}dt, 
    \end{align*}
where $\PV$ stands for the Cauchy principal value of the integral.
    \item The following Cauchy principle value given by Sokhotski--Plemelj formula, i.e.
    $$\frac{1}{2}\sum_{\pm}\biggl(\lim_{u\to c\pm i0}\int_{a}^{b}\frac{F(t)}{(t-u)^n}dt\biggr),$$
    where $u$ tends to $c$ on both sides of real axis.
\end{enumerate}
\end{prop}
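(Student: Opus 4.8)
The plan is to prove Proposition~\ref{prop:positivereals} by showing that each of the five expressions (i)--(v) equals the last one, (iv), which is the most explicit. The strategy is to reduce everything to the case of a single monomial pole $f(t)=(t-c)^{-n}$ multiplied by the holomorphic function $F$, since all five constructions are linear in $f$ and agree trivially on the holomorphic remainder of any Laurent-type decomposition. After that reduction, each equivalence becomes a concrete computation of constant terms in Laurent expansions, and the work is to line them up.

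\textbf{Step 1: (i) $=$ (iii).} First I would expand $\int_a^b |t-c|^s f(t)\,dt = \int_a^b |t-c|^{s-n} F(t)\,dt$, split the interval at $c$, and on each piece substitute $u=|t-c|$. Integrating by parts $n-1$ times (or using $\int_0^{\delta} u^{s-n}F(c\pm u)\,du$ and expanding $F$ in its Taylor series) produces a meromorphic function of $s$ whose only poles near $s=0$ come from the boundary terms $u^{s-n+j}/(s-n+j+1)$ at $u=0$; these are exactly the terms that the Hadamard finite part in (iii) discards when one writes $\bigl(\int_a^{c-\varepsilon}+\int_{c+\varepsilon}^b\bigr)f$ and expands in powers of $\varepsilon$. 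Matching the regular part at $s=0$ against the $\varepsilon^0$-coefficient gives (i) $=$ (iii). Essentially the same bookkeeping, tracking the $\varepsilon^0$ term directly against the right-hand side of (iv), gives (iii) $=$ (iv); this is a standard identity for the distribution $\FP\,(x-c)^{-n}$ and I would cite~\cite{GS62} for the formula while indicating the integration-by-parts derivation.

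\textbf{Step 2: (ii) $=$ (v) $=$ (iv).} For the contour descriptions, I would deform $C^{\pm}$ to a path along $[a,b]$ with a small semicircular indentation of radius $\varepsilon$ around $c$, above for $C^-$ and below for $C^+$ (sign chosen so the orientation works out). On the straight parts the two contributions average to the principal-value integral $\bigl(\int_a^{c-\varepsilon}+\int_{c+\varepsilon}^b\bigr)f$; on the semicircles, parametrizing $t-c=\varepsilon e^{i\theta}$ and integrating $\varepsilon^{-n}e^{-in\theta}F(c+\varepsilon e^{i\theta})$, the two half-circle integrals are negatives of each other for the genuinely singular Laurent terms and cancel in the average, while the $\varepsilon\to 0$ limit of the remaining piece reproduces precisely the finite-part expression — so (ii) equals (iii), hence (iv). For (v), the Sokhotski--Plemelj identity $\lim_{u\to c\pm i0}(t-u)^{-n}$ is (up to the sign $\pm$) the boundary value whose symmetric average is the distribution $\FP\,(x-c)^{-n}$; pairing with $F$ and integrating by parts $n$ times to move all derivatives onto $F$ reproduces the right-hand side of (iv) together with the $\PV$-integral of $F^{(n)}(t)/(t-c)$. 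Again I would invoke the distributional Sokhotski--Plemelj formula rather than rederiving it.

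\textbf{Main obstacle.} The genuinely delicate point is the higher-order pole ($n\geq 2$): for $n=1$ every equivalence is the classical Sokhotski--Plemelj / principal-value story, but for $n\geq 2$ one must carefully control the \emph{divergent} boundary terms $F^{(i)}(a)/(a-c)^{n-i}$ and $F^{(i)}(b)/(b-c)^{n-i}$ that appear in (iv) and verify they are produced identically by the $s$-continuation, the $\varepsilon$-finite-part, and the contour deformation — sign conventions and the combinatorial factors $(n-i-1)!/n!$ have to match across all three. I expect the cleanest route is to fix the decomposition $F(t)=\sum_{i=0}^{n-1}\frac{F^{(i)}(c)}{i!}(t-c)^i + (t-c)^n G(t)$ with $G$ holomorphic, observe that all five operations annihilate the pole structure of the $G$-part and agree on it with the ordinary integral, and then check the five operations on each single monomial $(t-c)^{i-n}$, $0\leq i\leq n-1$, where every quantity is an elementary explicit function — reducing the whole proposition to a finite, if somewhat tedious, verification.
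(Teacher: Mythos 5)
Your strategy coincides with the paper's own proof: reduce by linearity and the Laurent decomposition at $c$ to the monomials $(t-c)^{-m}$, verify (i), (ii), (iii) agree by explicit computation of the $s$- and $\varepsilon$-expansions near the pole, and obtain (iii) $\Leftrightarrow$ (iv) $\Leftrightarrow$ (v) by integration by parts together with the Sokhotski--Plemelj formula. One detail in your Step 2 is misstated, and it is exactly the point you flag as delicate: for a pole of \emph{even} order $m\geq 2$ the two semicircle integrals $\int_{S^{\pm}_{\varepsilon}}(t-c)^{-m}\,dt$ are not negatives of each other --- a direct computation (or the existence of a single-valued antiderivative) shows each equals $(1+(-1)^{m})\,\varepsilon^{1-m}/(1-m)$, so they are \emph{equal} and do not cancel in the average. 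What saves the argument is that this common value is a pure negative power of $\varepsilon$ with no constant term, hence invisible to the Hadamard finite part, and it matches the constant term of the Riesz continuation $\bigl[\int_{c-\varepsilon}^{c+\varepsilon}|t-c|^{s}(t-c)^{-m}\,dt\bigr]_{s=0}=(1+(-1)^m)\varepsilon^{1-m}/(1-m)$; the genuine antisymmetric cancellation you describe occurs only for $m$ odd (e.g.\ $m=1$, where the semicircles give $\mp i\pi$). Your proposed fallback of checking each monomial explicitly would surface this, so the proof goes through, but as written the cancellation mechanism is the odd-order one and does not cover the even-order poles.
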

\begin{proof}
$(\rom{1})\Leftrightarrow(\rom{2})\Leftrightarrow(\rom{3})$. If $f(t)$ is holomorphic then the implication is immediate. So it suffices to check the function $f(t)=(t-c)^{-n}$. We may set the paths $C^{\pm}$ to be the paths agreed with the real axis but modified with small upper (resp. lower) semi-circles $S_{\varepsilon}^{\pm}$ at $c$ of radius $\varepsilon$.

\begin{center}
\begin{tikzpicture}[scale=2.0, trim left=1cm]\label{fig:path}
\def\gap{0.08}
\def\pathscale{5}
\def\littleradius{0.5}
\def\littlangle{asin(\gap/\littleradius)}
\draw 
(0, 0) -- (5, 0*\pathscale);

\centerarc[red,thick,decoration={
	markings,
	mark=at position 0.7 with {\arrow{latex}}},postaction={decorate}](3,0)(180-\littlangle:0+\littlangle:\littleradius);
\draw[red,thick,decoration={
	markings,
	mark=at position 0.5 with {\arrow{latex}}},postaction={decorate}] (0, \gap) -- ({3-\littleradius*cos(\littlangle)},\gap) node[black, above left] {$C^{+}$};
\draw[red,thick,decoration={
	markings,
	mark=at position 0.5 with {\arrow{latex}}},postaction={decorate}] ({3+\littleradius*cos(\littlangle)},\gap) -- (\pathscale,\gap);
\centerarc[red,thick,decoration={
	markings,
	mark=at position 0.7 with {\arrow{latex}}},postaction={decorate}](3,0)(-180+\littlangle:0-\littlangle:\littleradius); 
\draw[red,thick,decoration={
	markings,
	mark=at position 0.5 with {\arrow{latex}}},postaction={decorate}] (0,-\gap) -- ({3-\littleradius*cos(\littlangle)},-\gap)  node[black, below left] {$C^{-}$};
\draw[red,thick,decoration={
	markings,
	mark=at position 0.5 with {\arrow{latex}}},postaction={decorate}] ({3+\littleradius*cos(\littlangle)},-\gap) -- (\pathscale,-\gap);
\filldraw [black] (3,0) circle (1pt) node[anchor=north] at (3,-0.1) {$c$};
\end{tikzpicture}
\end{center}
For $\RE(s)\ll 0$, the meromorphic continuation gives us
\begin{align*}
\int_{c-\varepsilon}^{c+\varepsilon}  \abs{t-c}^s f(t) dt &=(-1)^n\int_{c-\varepsilon}^{c}(c-t)^{s-n}dt+\int_{c}^{c+\varepsilon}(t-c)^{s-n}dt\\
&=(1+(-1)^n)\frac{\varepsilon^{s+1-n}}{s+1-n}.
\end{align*}
Hence, the constant term in the Laurent expansion at $s=0$ is
\begin{equation}\label{eq:epsinodd}
\left[\int_{c-\varepsilon}^{c+\varepsilon} \abs{t-c}^s f(t) dt\right]_{s=0}=
\begin{cases}	
2\,\varepsilon^{1-n}/(1-n) & n \text{ even}\\
0 & n \text{ odd}\end{cases}.
\end{equation}
Meanwhile, the integrations along the two small semi-circles $S_{\varepsilon}^{\pm}$ give
\begin{align*}
 \f{1}{2}\left(\int_{S^{+}_{\varepsilon}}+\int_{S^{-}_{\varepsilon}}\right)\f{1}{(t-c)^{n}}dt
&=\f{i}{2}\left(\int_{\pi}^{0}\varepsilon^{1-n}\f{d\theta}{e^{i(n-1)\theta}}+\int_{-\pi}^{0}\varepsilon^{1-n}\f{d\theta}{e^{i(n-1)\theta}}\right)
\end{align*}
An elementary calculation shows that this integral coincides with $\eqref{eq:epsinodd}$. This implies that the meromorphic continuation yields the same result as hyperfunction. 

Observe that the above integrals have always finite part $0$ with respect to $\varepsilon$. It follows that the Hadamard finite part integral has the same value as the integral of hyperfunction. These prove that $(\rom{1})$, $(\rom{2})$ and $(\rom{3})$ are equal.

$(\rom{3})\Leftrightarrow(\rom{4})$. Integrating by parts, for any testing function $\phi$ we get
\begin{align*}
    \int_{a}^{b}\frac{\phi(t)}{(t-c)^n}dt=&\sum_{i=0}^{n-1}\frac{(n-i-1)!}{n!}\biggl(\frac{\phi^{(i)}(a)}{(a-c)^{n-i}}-\frac{\phi^{(i)}(b)}{(b-c)^{n-i}}\biggr)\\
    &+\sum_{i=0}^{n-1}\frac{\phi^{(i)}(c)}{i!}\frac{1-(-1)^{n-i}}{(n-i)\varepsilon^{n-i}}+\f{1}{n!}\biggl(\int_{a}^{c-\varepsilon}+\int_{c+\varepsilon}^{b}\biggr)\frac{\phi^{(n)}(t)}{t-c}dt.
\end{align*}
The last term is a convergent integral with Cauchy principal value as $\varepsilon\to 0$. Thus
\begin{align*}
 \Bigl(\FP\,\frac{1}{(x-c)^n},\phi(t)\Bigr)=&\sum_{i=0}^{n-1}\frac{(n-i-1)!}{n!}\biggl(\frac{\phi^{(i)}(a)}{(a-c)^{n-i}}-\frac{\phi^{(i)}(b)}{(b-c)^{n-i}}\biggr)\\
 &+\f{1}{n!}\lim_{\varepsilon\to 0}\biggl(\int_{a}^{c-\varepsilon}+\int_{c+\varepsilon}^{b}\biggr)\frac{\phi^{(n)}(t)}{t-c}dt
\end{align*}
is the finite part with respect to $\varepsilon$. This shows $(\rom{3})\Leftrightarrow(\rom{4})$.

$(\rom{4})\Leftrightarrow(\rom{5})$. This parts follows closely with Fox~\cite{Fox} and Gelfand--Shilov~\cite{GS62}. Again using integration by parts, for any $u$ not in $[a,b]$, one has
\begin{align*}
      \int_{a}^{b}\frac{F(t)}{(t-u)^n}dt=&\sum_{i=0}^{n-1}\frac{(n-i-1)!}{n!}\biggl(\frac{F^{(i)}(a)}{(a-u)^{n-i}}-\frac{F^{(i)}(b)}{(b-u)^{n-i}}\biggr)\\
      &+\f{1}{n!}\int_{a}^{b}\frac{F^{(n)}(t)}{t-u}dt.  
\end{align*}
Let $u\to c\pm i0$ from both sides of real axis, by the Sokhotski--Plemelj formula of Cauchy principal valued integral we obtain $(\rom{4})\Leftrightarrow(\rom{5})$.
\end{proof}

In general, let $f(t)$ be a function which has a finite number of positive real poles and has at most linear exponential growth at $0$ and infinity. Consider finitely many open intervals
$$(a_1,a_2),(a_2,a_3)\dots,(a_{n-1},a_n).$$
Suppose that all poles are contained in these intervals and each interval contains exactly one isolated pole. On every interval, we use the previous approaches from Proposition~\ref{prop:positivereals} to get a regularized integral. Moreover, on the intervals
$$(0,a_1),(a_n,\infty),$$
we assign to them the regularized integrals from Definition~\ref{defn:infinity}. At last, we sum up them all. This gives the \emph{regularized integral of $f$ on $(0,\infty)$}, written again as
$$\int_{0}^{\infty,*}f(t)dt.$$
It is clear that the above definition is independent of the choice of intervals.

\section{$L$-function of meromorphic quasi-modular forms}\label{sec:8}
We first define the $L$-function of a meromorphic quasi-modular form through the regularized integral defined in Section \ref{sec:7}. 
\begin{defn}
Let $f$ be a meromorphic quasi-modular form. Then we define its \emph{complete $L$-function} by
\[\Lambda(f,s)=\int_0^{\infty,\ast}f(it)t^{s-1}dt.\]
The \emph{Dirichlet $L$-function} associated to $f$ is defined as
\[L(f,s)=\frac{(2\pi)^s}{\Gamma(s)}\Lambda(f,s).\]
\end{defn}

In the following, we will give an explicit formula for $\Lambda(f,s)$ for any meromorphic quasi-modular form $f$. We first consider the regularized integral of meromorphic function $f$ at infinity. 
\begin{lem}\label{lem:hol}
Let $f$ be a meromorphic function in a neighbourhood of the half-strip $\mathcal{F}_{t_0}$ with only pole at infinity. Suppose its Fourier expansion at infinity is given as
$f(\tau)=\sum_{n\geq -n_0}a_f(n)q^n$. 
Then the regularized integral of $f(it)t^{s-1}$ exists and defines a meromorphic function in $s$. More precisely, we have
\[\int_{t_0}^{\infty,\ast}f(it)t^{s-1}dt=-\frac{a_f(0)t_0^s}{s}+\sum_{n\neq 0}\frac{a_f(n)\Gamma(s,2\pi nt_0)}{(2\pi n)^s}.\]
\end{lem}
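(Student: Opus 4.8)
The plan is to expand $f(it)$ into its Fourier series, integrate term by term while $\RE w$ is large, recognise each term as an incomplete Gamma integral, and then continue the resulting expression in $w$ to $w=0$. Since $f$ is holomorphic on a neighbourhood of $\{\tau:\IM\tau\ge t_0\}$ with at worst a pole at $\infty$, the series $f(it)=\sum_{n\ge -n_0}a_f(n)e^{-2\pi nt}$ converges absolutely and uniformly for $t\ge t_0$, and Cauchy's estimates on a slightly larger strip give $a_f(n)=O(e^{2\pi n(t_0-\delta)})$ as $n\to+\infty$ for some $\delta>0$. Hence for $\RE w>2\pi n_0$ the integrand $e^{-wt}f(it)t^{s-1}$ decays exponentially as $t\to\infty$, Fubini's theorem applies, and the substitution $u=(w+2\pi n)t$ yields
\[
\int_{t_0}^{\infty}e^{-wt}f(it)t^{s-1}\,dt=\sum_{n\ge -n_0}a_f(n)\int_{t_0}^{\infty}e^{-(w+2\pi n)t}t^{s-1}\,dt=\sum_{n\ge -n_0}\frac{a_f(n)\,\Gamma\!\left(s,(w+2\pi n)t_0\right)}{(w+2\pi n)^{s}},
\]
where $\Gamma(s,x)=\int_x^{\infty}u^{s-1}e^{-u}\,du$ is the incomplete Gamma function.

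Next I would analytically continue this in $w$ to $w=0$. For $n\ne 0$ the summand $(w+2\pi n)^{-s}\Gamma\!\left(s,(w+2\pi n)t_0\right)$ is holomorphic in $w$ near $0$ once it is continued along a path from $\RE w\gg 0$ avoiding $w=-2\pi n$; for $n<0$ this continuation is exactly what fixes the branch of $(2\pi n)^{-s}\Gamma(s,2\pi nt_0)$ appearing in the statement. So the $n\ne 0$ terms simply specialise to $a_f(n)(2\pi n)^{-s}\Gamma(s,2\pi nt_0)$. For the term $n=0$ I would use the expansion $\Gamma(s,z)=\Gamma(s)-\sum_{k\ge 0}\frac{(-1)^k z^{s+k}}{k!\,(s+k)}$, valid for $s\notin\Z_{\le 0}$, to write
\[
w^{-s}\Gamma(s,wt_0)=\Gamma(s)\,w^{-s}-\sum_{k\ge 0}\frac{(-1)^k t_0^{s+k}}{k!\,(s+k)}\,w^{k}.
\]
The second sum is entire in $w$ with value $t_0^{s}/s$ at $w=0$, while $\Gamma(s)w^{-s}$ is a genuinely singular (non-integral) power of $w$, carrying no constant term and tending to $0$ as $w\to0$ when $\RE s<0$; it is precisely the piece discarded by the regularisation of Definition~\ref{defn:infinity}. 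Hence the regularised value at $w=0$ of the $n=0$ contribution is $-a_f(0)t_0^{s}/s$, and summing everything gives the asserted formula; in particular the regularised integral exists in the sense of Definition~\ref{defn:infinity}.

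It then remains to check meromorphy in $s$. Each $\Gamma(s,2\pi nt_0)$ with $n\ne 0$ is entire in $s$, and combining the bound $|\Gamma(s,x)|\ll_{s} x^{M}e^{-x}$ for $x\ge 1$ (uniform for $s$ in compact sets) with $a_f(n)=O(e^{2\pi n(t_0-\delta)})$ shows that $\sum_{n\ne 0}a_f(n)(2\pi n)^{-s}\Gamma(s,2\pi nt_0)$ converges locally uniformly in $s$, hence defines an entire function of $s$; the remaining term $-a_f(0)t_0^{s}/s$ is meromorphic with a single simple pole at $s=0$. The same uniformity also justifies, near $w=0$, that the term-by-term continued series agrees with the continuation of the integral itself. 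I expect the main obstacle to be this passage to $w=0$: making precise in what sense $w^{-s}\Gamma(s,wt_0)$ ``continues to $w=0$'' and isolating the correct finite value by removing the $\Gamma(s)w^{-s}$ piece, together with the (minor but genuine) bookkeeping of the branch of $(2\pi n)^{-s}\Gamma(s,2\pi nt_0)$ for the negative frequencies $n<0$; everything else is routine estimation with the incomplete Gamma function.
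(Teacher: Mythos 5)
Your argument is correct and follows essentially the same route as the paper: term-by-term integration of the Fourier series against $e^{-wt}$ for $\RE w\gg0$, identification of each term as $(w+2\pi n)^{-s}\Gamma(s,(w+2\pi n)t_0)$, continuation to $w=0$ with attention to the branch for the finitely many $n<0$, and a separate treatment of the $n=0$ term yielding $-a_f(0)t_0^s/s$ first for $\RE s<0$ and then by continuation in $s$. Your bound $a_f(n)=O(e^{2\pi n(t_0-\delta)})$, coming from holomorphy on a neighbourhood of the closed half-strip, is in fact the precise estimate needed for the absolute convergence claims, so no changes are required.
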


\begin{proof}
To avoid problems on negative real axis, following~\cite{BDE17}, we take only one branch of the incomplete gamma function with the branch cut to be the ray $\{re^{i\theta}\,|\,r\in\R_{>0}\}$ , where $\theta\in (\pi,\f{3}{2}\pi)$ is a fixed angle. It is easy to see that when $\RE(w)>2\pi n_0$, the integral \[\int_{t_0}^{\infty}\sum_{n\neq 0}a_f(n)e^{-(w+2\pi n)t}t^{s-1}dt\]
is absolutely convergent for any $s\in \mbc$. 

Since $f$ is holomorphic in a neighbourhood of the half-strip $\mathcal{F}_{t_0}$, its Fourier coefficients satisfy $a_f(n)=O(e^{2\pi nt_0})$. This ensures that the value of the above integral is the absolutely convergent sum
\[\sum_{n\neq 0}\frac{a_f(n)\Gamma(s,(2\pi n+w)t_0)}{(2\pi n+w)^s}=\sum_{n>0}+\sum_{n<0}\,,\]
in view of $\Gamma(s,2\pi nt_0)\sim(2\pi nt_0)^{s-1}e^{-2\pi n t_0}$.

We can see that the partial sum $\sum_{n>0}$ defines a holomorphic function of $(w,s)$ with $\RE(w)>-2\pi$ and $s\in\C$. The sum $\sum_{n<0}$ is a finite sum, it can be continued to a holomorphic function of $(\omega,s)$ in the open domain 
$$\C\,\backslash\bigcup_{m=1}^{n_0}\{m+re^{i\theta}\,|\,r\in\R_{\geq0}\}\times\C.$$
Hence both parts can be extend to a holomorphic function of $s$ in a neighbourhood of $w=0$.

We only need to deal with the term $n=0$. When $w=0$ and $\RE(s)<0$, the integral has well-defined value $-t_0^s/s$ (cf.~\cite[Remark after Prop. 3.3]{BDE17}). This extends to a meromorphic function to the whole complex plane in $s$.

At last we remark that the above evaluation is independent of the choice of $\theta$.
\end{proof}

To give the precise formula of regularized integral at positive reals, we will follow the method from  McGady~\cite{Mc19}, whose idea is to remove all the poles with polylogarithm functions. The succeeding calculation deals with the regularized integrals of polylogarithm functions first. 

Let $m\geq 0$ be an integer. For $s,\alpha\in\mbc$ with $-1/2\leq \RE(\alpha)<1/2$, we define the regularized integral
\[J_{m}(s,\alpha)= \int_{0}^{\infty,\ast}\Li_{-m}(\be(it-\alpha))t^s\f{dt}{t}.\]

\begin{lem}\label{lem:jnsa}
Let $\RE (s)>0$ and $m$ be a positive integer. If $\IM(\alpha)<0$, then we have
\[J_m(s,\alpha)=\f{\Gamma(s)}{(2\pi)^s}\Li_{s-m}(e^{-2\pi i\alpha}).\]
If $\IM(\alpha)>0$, we have
\begin{equation}\label{eq:j1mam}
    \begin{split}
        J_{m}(s,&\,\alpha)=\frac{e^{i\pi(s-m)}}{(2\pi )^m}\frac{\Gamma(s)}{\Gamma(s-m)}\zeta(1-s+m,\lfloor \RE\alpha\rfloor +1-\alpha)\\
        &-\frac{\Gamma(s)e^{i\pi(s-m)}}{(2\pi)^s}\Li_{s-m}(e^{2\pi i\alpha})+\delta_{\RE\alpha=0}\frac{\Gamma(s)}{\Gamma(s-m)}\frac{i^s(-\alpha)^{s-m-1}}{2(2\pi i)^m}.
    \end{split}
\end{equation}
\end{lem}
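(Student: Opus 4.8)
The plan is to split into the regimes $\IM\alpha<0$ and $\IM\alpha>0$, the first giving an honest convergent integral and the second being handled by McGady's device of removing poles with polylogarithms together with the functional equation of the Hurwitz zeta. For $\IM\alpha<0$ one has $|\be(it-\alpha)|=e^{-2\pi t+2\pi\IM\alpha}<1$ for every $t>0$, so $\Li_{-m}(\be(it-\alpha))=\sum_{n\geq1}n^m e^{-2\pi in\alpha}e^{-2\pi nt}$ converges, has no pole on $(0,\infty)$, a finite limit at $t=0^+$ and exponential decay at $+\infty$; hence for $\RE s>0$ the ordinary integral converges absolutely and, since both the origin- and tail-regularizations of Section~\ref{sec:7} reduce to a convergent integral in this case, the regularized integral coincides with it. Interchanging sum and integral by dominated convergence and using $\int_0^\infty e^{-2\pi nt}t^{s-1}\,dt=\Gamma(s)/(2\pi n)^s$ gives $J_m(s,\alpha)=\frac{\Gamma(s)}{(2\pi)^s}\sum_{n\geq1}n^{m-s}e^{-2\pi in\alpha}=\frac{\Gamma(s)}{(2\pi)^s}\Li_{s-m}(e^{-2\pi i\alpha})$, and meromorphic continuation in $s$ closes this case.

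For $\IM\alpha>0$ the starting point is the partial-fraction identity (the $m$-th $x$-derivative of $\pi\cot\pi x=\sum_{k\in\Z}\frac1{x+k}$, specialised at $x=it-\alpha$),
\[\Li_{-m}(\be(it-\alpha))=\frac{(-1)^{m+1}m!}{(2\pi i)^{m+1}}\sum_{k\in\Z}\frac1{(it-\alpha+k)^{m+1}},\]
valid whenever $it-\alpha\notin\Z$, i.e. on all of $(0,\infty)$ when $\RE\alpha\neq0$ and away from the single pole $t=\IM\alpha$ when $\RE\alpha=0$. Assuming first $\RE\alpha\neq0$: for $0<\RE s<m$ the double integral is absolutely convergent and $\sum_k$ may be exchanged with $\int_0^{\infty,*}$ (here an ordinary convergent integral). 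Writing $it-\alpha+k=i(t-t_k)$ with $t_k=i(k-\alpha)\notin[0,\infty)$ and using the Beta-type evaluation $\int_0^\infty t^{s-1}(t-t_k)^{-(m+1)}\,dt=\frac{\Gamma(s)\Gamma(m+1-s)}{m!}(-t_k)^{s-m-1}$ with the principal branch, I would split the sum at $k=\lfloor\RE\alpha\rfloor$; on each half, factoring out the appropriate $\pm i$ while checking that the principal arguments stay inside $(-\pi,\pi]$, the two half-sums collapse to $e^{\mp i\pi(m+1-s)/2}\,\zeta(m+1-s,\cdot)$ with Hurwitz-zeta arguments $\lfloor\RE\alpha\rfloor+1-\alpha$ and $\alpha-\lfloor\RE\alpha\rfloor$. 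Finally, Hurwitz's functional equation applied to these two Hurwitz zetas fuses the combination into a single $\zeta(m+1-s,\lfloor\RE\alpha\rfloor+1-\alpha)$ plus $\Li_{s-m}(e^{2\pi i\alpha})$ (the series that converges since $|e^{2\pi i\alpha}|<1$), and simplifying the $\Gamma$-factors via the reflection formula yields the asserted identity, which one extends in $s$ by continuation. (Equivalently, and more conceptually: when $\RE\alpha\neq0$ the integrand has no real pole, so $J_m(s,\alpha)$ is analytic in $\alpha$ across $\IM\alpha=0$, and Part~1 continues to give $J_m=\frac{\Gamma(s)}{(2\pi)^s}\Li_{s-m}(e^{-2\pi i\alpha})$ with $\Li_{s-m}$ the analytic continuation; the Lerch--Jonquière inversion formula for the polylogarithm then recasts this in the stated form.)

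It then remains to handle $\RE\alpha=0$, where $\alpha=i\IM\alpha$ and the term $k=0$ contributes a genuine pole at $t=\IM\alpha\in(0,\infty)$. Splitting that term off, the $k\neq0$ terms are treated exactly as above (they are the boundary value of the $\RE\alpha\neq0$ computation), while for $k=0$ one evaluates $\int_0^{\infty,*}t^{s-1}(it-\alpha)^{-(m+1)}\,dt$ with the pole on the contour. By Proposition~\ref{prop:positivereals} this regularized integral equals $\frac12(\int_{C^+}+\int_{C^-})$, which is the ``analytically continued'' value --- contributing, with the $k\neq0$ terms, the same two expressions as in the generic case --- plus a half-residue $\pm\pi i\,\Res_{t=\IM\alpha}$; the half-residue produces exactly the term $\delta_{\RE\alpha=0}\,\frac{\Gamma(s)}{\Gamma(s-m)}\frac{i^s(-\alpha)^{s-m-1}}{2(2\pi i)^m}$, the factor $\frac12$ being the half-residue and $\frac{\Gamma(s)}{\Gamma(s-m)}=(s-1)(s-2)\cdots(s-m)$ the coefficient coming from $\frac{d^m}{dt^m}t^{s-1}$.

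The hard part will be the branch bookkeeping: the phase factors appearing in the final formula come entirely from tracking the principal argument of $(-t_k)^{s-m-1}$ uniformly in $k\in\Z$, and these must be matched precisely against Hurwitz's functional equation so that the two Hurwitz-zeta pieces coalesce correctly into the single $\zeta$-term and the $\Li_{s-m}(e^{2\pi i\alpha})$-term. The remaining points --- justifying the termwise integration of $\sum_k$ against the regularized integral, and extracting the half-residue that yields the $\delta$-term --- are comparatively routine.
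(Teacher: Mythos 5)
Your argument is correct in all three regimes, but your main computational route for $\IM(\alpha)>0$ is genuinely different from the paper's. The paper handles $\RE\alpha\neq 0$ exactly by the device you relegate to your parenthetical remark: the convergent-case identity \eqref{eq:jnsa} is analytic in $\alpha$ off the imaginary axis (no pole of the integrand meets $(0,\infty)$ there), so it persists across $\IM\alpha=0$ with $\Li_{s-m}$ on its analytic continuation, and the stated form then drops out of the reflection formula \eqref{eq:reflcli} in one line. Your primary route instead expands $\Li_{-m}(\be(it-\alpha))$ by the Lipschitz partial-fraction formula, integrates termwise via the Beta-type evaluation, and reassembles the two half-sums into Hurwitz zetas before fusing them with Hurwitz's functional equation --- in effect reproving the Jonquière inversion formula inside the proof. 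That buys a self-contained derivation of the $\zeta(1-s+m,\lfloor\RE\alpha\rfloor+1-\alpha)$ term, at the price of the branch bookkeeping you rightly flag as the delicate point; the paper's route outsources all of that to \eqref{eq:reflcli}. For the $\delta_{\RE\alpha=0}$ term both of you invoke the Sokhotski--Plemelj characterization of Proposition~\ref{prop:positivereals}, but by different mechanisms: the paper averages the two one-sided limits of $\Li_{s-m}$ and reads off the discontinuity from the jump formula \eqref{eq:refep} across the cut $[1,\infty)$, whereas you isolate the $k=0$ partial fraction and take a half-residue at $t=\IM\alpha$; I checked that $\pi i\,\Res_{t=-i\alpha}\bigl(t^{s-1}(it-\alpha)^{-(m+1)}\bigr)$ times your prefactor does reproduce $\frac{\Gamma(s)}{\Gamma(s-m)}\frac{i^s(-\alpha)^{s-m-1}}{2(2\pi i)^m}$, so this step is sound (the sign ambiguity you note is resolved by which side the generic-case formula is continued from). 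The final continuation in $s$ from the strip $0<\RE s<m$ to all of $\RE s>0$ is legitimate since $J_m(s,\alpha)$ is itself analytic there.
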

\begin{proof}
Suppose that $\alpha$ is given with $\IM (\alpha)<0$, then $|\be(it-\alpha)|<1$, so we have the convergent integral
\begin{equation}\label{eq:jnsa}
\begin{split}
    &J_{m}(s,\alpha)=\int_{0}^{\infty}\sum_{n=1}^{\infty}\f{e^{-2\pi n(y+i\alpha)}}{n^{-m}}t^s\f{dt}{t}\\
    =&\f{\Gamma(s)}{(2\pi)^s}\sum_{n=1}^{\infty}\f{e^{-2\pi i n\alpha}}{n^{s-m}}=\f{\Gamma(s)}{(2\pi)^s}\Li_{s-m}(e^{-2\pi i\alpha}).
    \end{split}
\end{equation}
Note both sides extend to a holomorphic function of $\alpha$ except only when $\alpha$ on the imaginary axis. Thus it holds for all $\RE(\alpha)\neq 0$.

When $\IM(\alpha)>0$,  if $\alpha$ not on the imaginary axis, the formula $\eqref{eq:j1mam}$ just follows from rewriting~\eqref{eq:jnsa} with the reflection formula of polylogarithm~\eqref{eq:reflcli} in the Appendix.

The difficulty arises as $\alpha=ai$ is on the imaginary axis where $a\in\mbr_{>0}$, where we encounter a Hadamard regularized integral. In this case we may rewrite the integral as
$$J_{m}(s,\alpha)=\int_{0}^{\infty,*}\Li_{-m}(\be(\tau-\alpha))\left(\f{\tau}{i}\right)^s\f{d\tau}{\tau}.$$
We recall that when $m$ is a positive integer, the polylogarithm $\Li_{-m}(z)$ is rational function. So the integrand is in fact a rational function of $e^{2\pi i\alpha}$. By the Sokhotski--Plemelj formula $(\rom{5})$ in Proposition~\ref{prop:positivereals}, the value of $J_{m}(s,\alpha)$ on imaginary axis should be the mean value of limits as $\RE (\alpha)$ tends to $0$ on left side and right side of imaginary axis. Moreover, when $z\in [1,\infty)$, by the reflection formula~\eqref{eq:reflcli}, we get
\begin{equation}\label{eq:refep}
    \lim_{\epsilon\to 0^+}\Li_s(ze^{2\pi i\epsilon})-\Li_s(ze^{-2\pi i\epsilon})=\frac{2\pi i}{\Gamma(s)}(\ln z)^{s-1}.
\end{equation}
So by combining $\eqref{eq:jnsa}$ and $\eqref{eq:refep}$, we have
\begin{align*}
   J_m(s,\alpha)&=\f{1}{2}\f{\Gamma(s)}{(2\pi)^s}\lim\limits_{\varepsilon\to 0^+}\left(\Li_{s-m}(e^{-2\pi i(\alpha+\varepsilon)})+\Li_{s-m}(e^{-2\pi i(\alpha-\varepsilon)})\right)\\
   &=\f{1}{2}\f{\Gamma(s)}{(2\pi)^s}\left(2\Li_{s-m}(e^{2\pi a})+\f{2\pi i}{\Gamma(s-m)}(2\pi a)^{s-m-1}\right).
\end{align*}
Using the reflection formula again, we get
\begin{align*}
J_m(s,\alpha)&=-\frac{e^{i\pi(s-m)}\Gamma(s)}{(2\pi )^s}\Li_{s-m}(e^{-2\pi a})+\frac{i^s}{(2\pi i)^{m}}\frac{\Gamma(s)}{\Gamma(s-m)}\zeta(1-s+m,1-ia)
\\&+\frac{i^s}{2(2\pi i)^{m}}\frac{\Gamma(s)}{\Gamma(s-m)}(-ia)^{s-m-1}.
\end{align*}
\end{proof}

More generally, we will encounter the following integral. Let $m\geq 0$ be an integer, $t_0>0$ be a real number and $s,\alpha\in \mbc$, we define
\[G_{m}(s,\alpha,t_0)=\int_{t_0}^{\infty,*}\Li_{-m}(\be(it-\alpha))t^s\f{dt}{t}.\]

\begin{lem}\label{lem:mmsat}
Let $m\geq 0$ be an integer and $t_0$ be a positive real number. Then the function $G_m(s,\alpha,t_0)$ extends to an entire function for all $s\in\C$ and 
$$G_m(s,\alpha,t_0)=G_{1,m}(s,\alpha,t_0)+G_{2,m}(s,\alpha,t_0).$$
When $\IM(\alpha)<t_0$, we have 
\[G_{1,m}(s,\alpha,t_0)=\f{1}{(2\pi)^{m}}\sum_{n=0}^{\infty}\f{e^{-2\pi in \alpha}\Gamma(s,2\pi nt_0)}{(2\pi n)^{s-m}} \text{ and } G_{2,m}(s,\alpha,t_0)=0.\]
When $\IM(\alpha)>t_0$, we have
$$
G_{1,m}(s,\alpha,t_0)=-\frac{e^{-i\pi(s-m)}}{(2\pi)^m}\left(\delta_{m=0}\frac{t_0^s}{s}+\sum_{n=1}^{\infty}\frac{e^{2\pi in\alpha}\Gamma(s,-2\pi nt_0)}{(2\pi n)^{s-m}}\right)
$$
and
\begin{align*}
G_{2,m}&(s,\alpha,t_0)=\frac{i^s}{(2\pi i)^m}\frac{\Gamma(s)}{\Gamma(s-m)}\zeta(1-s+m,\lfloor\RE\alpha\rfloor+1-\alpha)\\
&+\f{(-1)^{m-1}}{(2\pi)^{s}}\frac{ 2\pi i}{\Gamma(1-s)}\Li_{s-m}(e^{2\pi i\alpha})+\delta_{\RE\alpha=0}\frac{i^s}{2(2\pi i)^m}\frac{\Gamma(s)}{\Gamma(s-m)}(-\alpha)^{s-m-1}.    
\end{align*}

\end{lem}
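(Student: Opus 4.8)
The plan is to reduce the evaluation of $G_m(s,\alpha,t_0)$ to the two results already in hand: Lemma~\ref{lem:hol}, which computes the regularized integral at infinity of a holomorphic Fourier series, and Lemma~\ref{lem:jnsa}, which gives the closed form of $J_m(s,\alpha)$. The case split in the statement mirrors whether the half-line $[t_0,\infty)$ meets the pole locus of the integrand: for $m\ge 0$ the rational function $\Li_{-m}$ has its only pole at the value $1$, so $t\mapsto\Li_{-m}(\be(it-\alpha))$ has a real pole exactly where $\be(it-\alpha)=1$, which forces $\RE\alpha\in\mbz$ and $t=\IM\alpha$; since $-1/2\le\RE\alpha<1/2$ this happens only for $\RE\alpha=0$, and such a pole lies on $[t_0,\infty)$ precisely when $\IM\alpha\ge t_0$. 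This is exactly why the term $\delta_{\RE\alpha=0}$ occurs only in the second case. Throughout, one first argues for $\RE s>0$ and then propagates by analytic continuation.

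For the case $\IM\alpha<t_0$, I would note that $|\be(it-\alpha)|=e^{-2\pi(t-\IM\alpha)}<1$ for all $t\ge t_0$, so on a neighbourhood of the half-strip $\mathcal{F}_{t_0}$ the function $\tau\mapsto\Li_{-m}(\be(\tau-\alpha))$ is holomorphic, holomorphic at infinity, with Fourier expansion $\sum_{n\ge1}n^m e^{-2\pi in\alpha}q^n$ and vanishing principal part. Feeding $a_f(n)=n^m e^{-2\pi in\alpha}$ for $n\ge1$ (and $a_f(n)=0$ otherwise) into Lemma~\ref{lem:hol} gives directly
\[
G_m(s,\alpha,t_0)=\frac{1}{(2\pi)^m}\sum_{n\ge1}\frac{e^{-2\pi in\alpha}\Gamma(s,2\pi nt_0)}{(2\pi n)^{s-m}},
\]
which is the asserted $G_{1,m}$ with $G_{2,m}=0$, and the meromorphy (indeed entirety) in $s$ is exactly what Lemma~\ref{lem:hol} supplies.

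For the case $\IM\alpha>t_0$, the key step is the splitting
\[
G_m(s,\alpha,t_0)=J_m(s,\alpha)-\int_0^{t_0}\Li_{-m}(\be(it-\alpha))\,t^{s-1}\,dt,
\]
which is legitimate because the only real pole of the integrand, at $t=\IM\alpha>t_0$, lies outside $[0,t_0]$: hence the finite integral is an ordinary convergent integral for $\RE s>0$, and all Hadamard/Sokhotski--Plemelj subtleties (in particular the $\delta_{\RE\alpha=0}$ contribution, handled by Proposition~\ref{prop:positivereals}(v)) are already packaged inside $J_m$ by Lemma~\ref{lem:jnsa}. On $(0,t_0)$ one has $|\be(it-\alpha)|>1$, so I would use the reflection formula~\eqref{eq:reflcli} to rewrite $\Li_{-m}(\be(it-\alpha))$ as $(-1)^{m+1}\sum_{n\ge1}n^m e^{2\pi in\alpha}e^{2\pi nt}$, plus an additive constant when $m=0$, uniformly convergent on $[0,t_0]$. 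Integrating term by term, and writing $\int_0^{t_0}e^{2\pi nt}t^{s-1}\,dt$ through the incomplete Gamma function at the negative argument $-2\pi nt_0$ via $\gamma(s,\cdot)=\Gamma(s)-\Gamma(s,\cdot)$, produces exactly the series defining $G_{1,m}$ together with a single extra term proportional to $\tfrac{\Gamma(s)}{(2\pi)^s}\Li_{s-m}(e^{2\pi i\alpha})$. Inserting the explicit formula~\eqref{eq:j1mam} for $J_m$, the Hurwitz-zeta term and the $\delta_{\RE\alpha=0}$ term pass verbatim into $G_{2,m}$, while the two contributions proportional to $\Li_{s-m}(e^{2\pi i\alpha})$ — one from $J_m$, one from the finite integral — collapse, via the reflection $\Gamma(s)\Gamma(1-s)\sin\pi s=\pi$, into the single term $\tfrac{(-1)^{m-1}}{(2\pi)^s}\tfrac{2\pi i}{\Gamma(1-s)}\Li_{s-m}(e^{2\pi i\alpha})$ of $G_{2,m}$.

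Entirety of $G_m$ in $s$ then follows: the $G_{1,m}$-series converge locally uniformly because $\Gamma(s,\pm2\pi nt_0)$ grows no faster than $n^{s-1}e^{\mp2\pi nt_0}$, which is killed by the companion exponential factor since $\IM\alpha\neq t_0$; $1/\Gamma(1-s)$ is entire; and in the zeta term the polynomial $\Gamma(s)/\Gamma(s-m)=(s-1)(s-2)\cdots(s-m)$ vanishes at $s=m$, cancelling the unique pole of $\zeta(1-s+m,\cdot)$ there. The hard part, I expect, is not this structure but the bookkeeping of branches and normalising constants: one must fix determinations of $\Gamma(s,-2\pi nt_0)$ at negative real argument, of $(-1)^{-s}$, of $(2\pi i)^m$, and of $(-\alpha)^{s-m-1}$ that are consistent with the choices inside Lemma~\ref{lem:jnsa}, and then check that the two $\Li_{s-m}(e^{2\pi i\alpha})$-terms really do combine with the stated coefficient and that the pole cancellation in the zeta term is exact. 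The value $m=0$ also needs separate attention, since the reflection of $\Li_0$ carries the additive constant responsible for the $\delta_{m=0}\,t_0^s/s$ summand; and, as in the earlier lemmas, one should record that the final answer is independent of the auxiliary branch angle used to define the incomplete Gamma function.
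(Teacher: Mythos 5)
Your proposal follows essentially the same route as the paper's proof: term-by-term integration for $\IM(\alpha)<t_0$, and for $\IM(\alpha)>t_0$ the splitting $G_m(s,\alpha,t_0)=J_m(s,\alpha)-\int_0^{t_0}$, reflection of $\Li_{-m}$ on $(0,t_0)$ where $|\be(it-\alpha)|>1$, term-by-term integration via lower incomplete gamma functions, and Euler's reflection formula to merge the two $\Li_{s-m}(e^{2\pi i\alpha})$ contributions — this is exactly the paper's identity \eqref{eq:eulerref}. The one slip is in your entirety argument at $m=0$: there $\Gamma(s)/\Gamma(s-m)\equiv 1$ does not vanish at $s=0$, so the pole of $\zeta(1-s,\cdot)$ is not killed by the polynomial factor; the paper instead argues that this pole cancels against the $\delta_{m=0}\,t_0^s/s$ term of $G_{1,0}$, which is the separate attention your last sentence anticipates but does not carry out.
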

\begin{proof}

When $\IM \alpha<t_0$, we can integrate term-wisely. It is immediate that for any $s\in\C$
\[G_{m}(s,\alpha,t_0)=\sum_{n=1}^{\infty}\int_{t_0}^{\infty}\f{e^{-2\pi n(t+i\alpha)}}{n^{-m}}t^s\f{dt}{t}=\f{1}{(2\pi)^{m}}\sum_{n=1}^{\infty}\f{e^{-2\pi in\alpha}\Gamma(s,2\pi nt_0)}{(2\pi n)^{s-m}}.\]
Here the absolute convergence is guaranteed by $\Gamma(s,2\pi nt_0)\sim(2\pi nt_0)^{s-1}e^{-2\pi n t_0}$.

When $\IM(\alpha)>t_0$, we may assume that $\RE(s)>0$ first.  The integral is defined by Hadamard regularization and can not be computed directly. We first evaluate the following convergent integral
\[\int_{0}^{t_0}\Li_{-m}(\be(it-\alpha))t^s\f{dt}{t}.\]
When $m>0$, by the reflection formula $\eqref{eq:reflcli}$, this integral equals to 
\[(-1)^{m-1}\int_{0}^{t_0}\Li_{-m}(\be(\alpha-it))t^s\f{dt}{t}.\]
Hence we have 
\begin{equation}
    \begin{split}
        &\int_{0}^{t_0}\Li_{-m}(\be(\tau-\alpha))t^s\f{dt}{t}\\
=&(-1)^{m-1}\sum_{n=1}^{\infty}\int_{0}^{t_0}\f{t^se^{2\pi n(t+i\alpha)}}{n^{-m}}\f{dt}{t}\\
=&-e^{-i\pi(s-m)}\f{1}{(2\pi)^{m}}\sum_{n=1}^{\infty}\f{e^{2\pi in\alpha}\gamma(s,-2\pi n t_0)}{(2\pi n)^{s-m}}
    \end{split}
\end{equation}
where each term has exponential decay since $\gamma(s,-2\pi n t_0)\sim(-2\pi nt_0)^{s-1}e^{2\pi nt_0}$ as $n$ grows to infinity. When $m=0$, we have $\Li_0(x)=x/(1-x)$, so $\Li_0(x)=-\Li_0(1/x)-1$. In this case, the formula becomes 
\[\int_{0}^{t_0}\Li_{0}(\be(\tau-\alpha))t^s\f{dt}{t}=-e^{-i\pi s}\left(\frac{t_0^s}{s}+\sum_{n=1}^{\infty}\f{e^{-2\pi na}\gamma(s,-2\pi n t_0)}{(2\pi n)^{s}}\right)\]

To finish the proof, by combining with Lemma \ref{lem:jnsa}, we have to show that 
\begin{equation}\label{eq:eulerref}
    \begin{split}
        &e^{-i\pi(s-m)}\sum_{n\geq 1}\frac{e^{2\pi in\alpha}\gamma(s,-2\pi nt_0)}{n^{s-m}}-\Gamma(s)e^{i\pi(s-m)}\Li_{s-m}(e^{2\pi i\alpha})\\
    =&\frac{(-1)^{m-1}2\pi i}{\Gamma(1-s)}\Li_{s-m}(e^{2\pi i\alpha})-e^{-i\pi(s-m)}\sum_{n\geq 1}\frac{e^{2\pi in\alpha}\Gamma(s,-2\pi nt_0)}{n^{s-m}}.
    \end{split}
\end{equation}
From $\Li_{s-m}(e^{2\pi i\alpha})=\sum_{n\geq 1}n^{m-s}e^{2\pi in\alpha}$, we know the identity $\eqref{eq:eulerref}$ is equivalent to 
\[e^{-i\pi(s-m)}\Gamma(s)-e^{i\pi(s-m)}\Gamma(s)=\frac{(-1)^{m-1}2\pi i}{\Gamma(1-s)}.\]
But this is exactly the Euler's reflection formula.

For general $s\in\C$, we consider analytic continuation on both sides and thus obtain the same formula. Indeed, the function $G_{2,m}(s,\alpha,t_0)$ is meromorphic only when $m=0$. It has a unique single pole at $s=0$ with residue
$$\Res_{s=0}\,\zeta(1-s,\lfloor\RE\alpha\rfloor+1-\alpha)=-1.$$
However, this pole cancels with the term $\delta_{m=0}\,t_0^s/s$ in $G_{1,m}(s,\alpha,t_0)$, giving us an entire function $G_m(s,\alpha,t_0)$.
\end{proof}

Now we are able to give the explicit formula for the $L$-function. Choose any real positive number $t_0$. Suppose $f$ has poles
$\alpha_1,\cdots,\alpha_l$ in $\mathcal{F}_{t_0}-\{\infty\}$.
Put $$\tilde{f}(\tau)=f(\tau)-P_{\alpha_1}(f)-\dots-P_{\alpha_l}(f).$$
We define also
\[I(f,s,t_0)\coloneqq\sum_{j=1}^l\sum_{m=1}^{\ord_f \alpha_j}\frac{(-2\pi i)^m}{(m-1)!}c_{f,\alpha_j}(m)G_{m-1}(s,\alpha_j,t_0).\]

\begin{thm}\label{thm:lfunction}
Let $t_0$ be any real positive number. Let $f\in\qm_k^{\mero,\,p}$ be a meromorphic quasi-modular form with prescribed poles and principle parts as above. Let $f_1,\cdots,f_p$ be the component functions of $f$. Suppose that $\tilde{f}(\tau)=\sum_{n\gg -\infty} \tilde{a}_f(n)q^n$, then we have
\begin{align*}
    \Lambda(f,s)=&-\tilde{a}_f(0)\left(\frac{t_0^s}{s}+\frac{i^kt_0^{s-k}}{k-s}\right)+\sum_{n\neq 0}\tilde{a}_f(n)\left(\frac{\Gamma(s,2\pi n t_0)}{(2\pi n)^s}+\frac{i^k\Gamma(k-s,2\pi n/t_0)}{(2\pi n)^{k-s}}\right)\\
    &+I(f,s,t_0)+i^kI(f,k-s,t_0^{-1})+\sum_{r=1}^p\left(-\frac{i^{k-r}\tilde{a}_{f_r}(0)t_0^{k-r-s}}{k-r-s}\right.\\
    &+\left.\sum_{n\neq 0}\frac{i^{k-r}\tilde{a}_{f_r}(n)\Gamma(k-r-s,\frac{2\pi n}{t_0})}{(2\pi n)^{k-r-s}} +i^{k-r}I(f_r,k-r-s,t_0\inv)\right).
\end{align*}
\end{thm}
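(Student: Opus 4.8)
The plan is to compute $\Lambda(f,s)$ by splitting the regularized integral at $t_0$ and reflecting the part near the cusp $0$ through $t\mapsto 1/t$. First I would write $\Lambda(f,s)=\int_{0}^{t_0,\ast}f(it)t^{s-1}dt+\int_{t_0}^{\infty,\ast}f(it)t^{s-1}dt$, which is legitimate by the construction of the regularized integral on $(0,\infty)$ in Section~\ref{sec:7} once $t_0$ avoids the finitely many real poles of $f(it)$. By the definition of the regularized integral at $0$, the first summand equals $\int_{1/t_0}^{\infty,\ast}f(i/t)t^{-s-1}dt$. Applying the quasi-modular transformation~\eqref{eq:slashf} at $\gamma=S=\left(\begin{smallmatrix}0&-1\\1&0\end{smallmatrix}\right)$ gives $f(-1/\tau)=\sum_{r=0}^{p}f_r(\tau)\tau^{k-r}$, hence $f(i/t)=\sum_{r=0}^{p}i^{k-r}t^{k-r}f_r(it)$, so
\[
\Lambda(f,s)=\int_{t_0}^{\infty,\ast}f(it)t^{s-1}dt+\sum_{r=0}^{p}i^{k-r}\int_{1/t_0}^{\infty,\ast}f_r(it)t^{(k-r-s)-1}dt.
\]
Everything is thereby reduced to evaluating ``half-line'' regularized integrals $\int_{T}^{\infty,\ast}g(it)t^{w-1}dt$ for $g\in\{f,f_1,\dots,f_p\}$.

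For such an integral I would subtract the principal parts of $g$ along the trimmed strip $\mathcal F_T$: write $g=\tilde g+\sum_j P_{\alpha_j}(g)$, where $\alpha_1,\dots,\alpha_l$ are the poles of $g$ in $\mathcal F_T\setminus\{\infty\}$, so that $\tilde g$ is holomorphic and bounded on $\mathcal F_T$ with the same principal part at $\infty$ as $g$ (each $P_{\alpha_j}(g)$ being, high enough in $\mathcal F_T$, a convergent $q$-series without negative exponents), exactly as in the proof of the Fourier-coefficient estimate. By linearity of the regularized integral, the $\tilde g$-term is computed by Lemma~\ref{lem:hol}, yielding $-\tilde a_g(0)T^w/w+\sum_{n\neq 0}\tilde a_g(n)\Gamma(w,2\pi nT)/(2\pi n)^w$, while each $P_{\alpha_j}(g)=\sum_m\frac{(-2\pi i)^m}{(m-1)!}c_{g,\alpha_j}(m)\Li_{1-m}(\be(\tau-\alpha_j))$ contributes termwise $\sum_m\frac{(-2\pi i)^m}{(m-1)!}c_{g,\alpha_j}(m)G_{m-1}(w,\alpha_j,T)$, i.e.\ precisely $I(g,w,T)$, whose closed form is Lemma~\ref{lem:mmsat} (the cases $\IM\alpha_j<T$ and $\IM\alpha_j>T$ distinguishing whether the ray $\{it:t>T\}$ meets the pole). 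Carrying this out for the direct piece ($g=f$, $T=t_0$, $w=s$), the $r=0$ reflected piece ($g=f$, $T=t_0^{-1}$, $w=k-s$, prefactor $i^k$) and the $r\geq 1$ reflected pieces ($g=f_r$, $T=t_0^{-1}$, $w=k-r-s$, prefactor $i^{k-r}$), and then collecting the two $f$-contributions, reproduces the displayed formula; every term on the right is manifestly meromorphic in $s$ by Lemmas~\ref{lem:hol} and~\ref{lem:mmsat}, with the $\zeta$-pole at $w=0$ cancelling $\delta_{m=0}\,t_0^{w}/w$ just as recorded after Lemma~\ref{lem:mmsat}.

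The points requiring care are: (i) that splitting $\int_0^{\infty,\ast}$ at $t_0$ is admissible, and — more delicately — that the change of variables $t\mapsto1/t$ is compatible both with the exponential-growth regularization at $0$ and $\infty$ of Definition~\ref{defn:infinity} and with the Hadamard regularization at interior real poles; the latter is cleanest via the Cauchy-principal-value descriptions (iii) and (v) of Proposition~\ref{prop:positivereals}, under which $t\mapsto1/t$ merely swaps $C^{+}$ and $C^{-}$ and so preserves their average; (ii) the linearity of $\int_T^{\infty,\ast}$ along the decomposition $g=\tilde g+\sum_jP_{\alpha_j}(g)$, which holds since each summand has at most linear exponential growth at the two cusps and finitely many real poles, so the construction of Section~\ref{sec:7} applies termwise; and (iii) keeping straight which poles belong to $\mathcal F_{t_0}$ versus to $\mathcal F_{t_0^{-1}}$ for the direct and the reflected pieces, and confirming that the relevant $\tilde g$ (resp.\ $\tilde f_r$) is genuinely holomorphic on the strip over which Lemma~\ref{lem:hol} is applied. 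I expect (iii), together with the final reorganization of the resulting $\Gamma$- and $\zeta$-terms into the stated shape, to be the only real obstacle; the rest is bookkeeping once the two lemmas are available.
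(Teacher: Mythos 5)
Your proposal follows the paper's proof essentially verbatim: split the regularized integral at $t_0$, reflect the piece near $0$ via $t\mapsto 1/t$ and the transformation $f(i/t)=\sum_r i^{k-r}t^{k-r}f_r(it)$, then evaluate each half-line integral by subtracting principal parts and invoking Lemma~\ref{lem:hol} for the holomorphic remainder and Lemma~\ref{lem:mmsat} for the polylogarithm terms. The additional care you flag about compatibility of $t\mapsto 1/t$ with the Hadamard and exponential regularizations is a point the paper passes over silently, but it does not change the route.
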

\begin{proof}
We divide $\Lambda(f,s)$ into two parts:
\[\Lambda(f,s)= \int_{0}^{t_0,\ast}f(it)t^{s-1}dt+ \int_{t_0}^{\infty,\ast}f(it)t^{s-1}dt.\]
We first deal with the second part.
\begin{align*}
     \int_{t_0}^{\infty,\ast}f(it)t^{s-1}dt= \int_{t_0}^{\infty,\ast}\tilde{f}(it)t^{s-1}dt+\sum_{j=1}^l \int_{t_0}^{\infty,\ast}P_{\alpha_j}(f)(it)t^{s-1}dt.
\end{align*}
Since $\tilde{f}$ is holomorphic, by Lemma \ref{lem:hol}, we get 
\[ \int_{t_0}^{\infty,\ast}\tilde{f}(it)t^{s-1}dt=-\frac{\tilde{a}_f(0)t_0^s}{s}+\sum_{n\neq 0}\frac{\tilde{a}_f(n)\Gamma(s,2\pi nt_0)}{(2\pi n)^s}.\]
The integral of $P_{\alpha_j}(f)$ is shown in Lemma \ref{lem:mmsat} which gives 
\[ \int_{t_0}^{\infty,\ast}P_{\alpha_j}(f)(it)t^{s-1}dt=\sum_{m=1}^{\ord_{f} \alpha_j}\frac{(-2\pi i)^m}{(m-1)!}c_{f,\alpha_j}(m)G_{m-1}(s,\alpha_j,t_0).\]
For the first part, by changing the variable $t\to 1/t$, we get
\[ \int_{0}^{t_0,\ast}f(it)t^{s-1}dt= \int_{t_0\inv}^{\infty,\ast}f(i/t)t^{-s-1}dt.\]
Since $f$ is a meromorphic quasi-modular form,  we have the transformation
\begin{equation}\label{eq:fit}
    f(i/t)=\sum_{r=0}^pf_r(it)(it)^{k-r},
\end{equation}
by applying equation $\eqref{eq:slashf}$ with the inversion $\gamma=\begin{pmatrix}0&-1\\1&0\end{pmatrix}$. So we have 
\begin{align*}
    & \int_{0}^{t_0,\ast}f(it)t^{s-1}dt=\sum_{r=0}^p i^{k-r} \int_{t_0\inv}^{\infty,\ast}f_r(it)t^{k-r-s-1}dt\\
    &\quad=\sum_{r=0}^p\left(-\frac{i^{k-r}\tilde{a}_{f_r}(0)t_0^{k-r-s}}{k-r-s}+\sum_{n\neq 0}\frac{i^{k-r}\tilde{a}_{f_r}(n)\Gamma(k-r-s,\frac{2\pi n}{t_0})}{(2\pi n)^{k-r-s}}\right.\\
    &\qquad +i^{k-r}I(f_r,k-r-s,t_0\inv)\Bigg).
\end{align*}
Finally, we complete the proof by noting that $f_0=f$. 
\end{proof}

We are now ready to give the proof of Theorem~\ref{thm:lambdaf}.
\begin{proof}[Proof of Theorem~\ref{thm:lambdaf}]
The meromorphic continuation and residues follow directly from Theorem \ref{thm:lfunction}, since $I(f_r,s,t_0)$ is entire in $s$. So we only need to prove the functional equations. 

Lemma \ref{lem:qjf} shows that $f_m$ is also a meromorphic quasi-modular form of weight $k-2m$ and depth $p-m$ with components $f_m,\binom{m+1}{1}f_{m+1},\cdots,\binom{p}{p-m}f_p$. It is thus enough for us to show the functional equation of $f$ in $(\rom{1})$. The completed $L$-function of $f$ is
\begin{equation}\label{eq:fms}
    \begin{split}
    &\Lambda(f,s)= \int_{t_0}^{\infty,\ast}f(it)t^{s-1}dt+\int_{t_0\inv}^{\infty,\ast}f(i/t)t^{-s-1}dt\\
   &\quad= \int_{t_0}^{\infty,\ast}f(it)t^{s-1}dt+\int_{t_0\inv}^{\infty,\ast}\sum_{r=0}^{p}i^{k-r}f_{r}(it)t^{k-r-s-1}dt.
    \end{split}
\end{equation}
Here we use the transformation formula $\eqref{eq:fit}$ again. 

On the other hand, under the transform $t\mapsto 1/t$ the first integral becomes
\begin{equation}\label{eq:0t0}
\int_{t_0}^{\infty ,\ast}\,\sum_{r=0}^{p}i^{k-r}f_{r}(i/t)t^{r+s-k-1}dt=\int_0^{t_0\inv ,\ast}\,\sum_{r=0}^{p}i^{k-r}f_{r}(it)t^{k-r-s-1}dt.
\end{equation}
Combining equation $\eqref{eq:fms}$, we get 
\begin{align*}
    \Lambda(f,s)=&\left(\int_0^{t_0\inv ,\ast}+\int_{t_0\inv}^{\infty,\ast}\right)\sum_{r=0}^{p}i^{k-r}f_{r}(it)t^{k-r-s-1}dt\\
    =&\sum_{r=0}^{p}i^{k-r}\Lambda(f_{r},k-r-s).
\end{align*}
This proves the functional equation of $f$. 
\end{proof}

\begin{prop}~\label{prop:polesofL}
Let $f\in \qm_k^{\mero,\,p}$ be a meromorphic quasi-modular form. Then its Dirichlet $L$-function $L(f,s)$ is a meromorphic function for all $s\in\C$. It has only possible simple poles at positive integers within $k-p\leq s\leq k$.
\end{prop}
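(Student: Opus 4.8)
The plan is to deduce the proposition directly from Theorem~\ref{thm:lambdaf} together with the elementary analytic behaviour of the gamma factor, so the proof is essentially bookkeeping. First I would recall the definition $L(f,s)=\frac{(2\pi)^s}{\Gamma(s)}\Lambda(f,s)$, and observe that $L(f,s)$ differs from $\Lambda(f,s)$ only by multiplication with the entire, nowhere-vanishing function $(2\pi)^s$ and with the reciprocal gamma function $1/\Gamma(s)$. The key point about the latter is that $1/\Gamma(s)$ is entire and vanishes, to first order, exactly at the non-positive integers $s=0,-1,-2,\dots$, and is non-zero everywhere else.

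Next I would invoke Theorem~\ref{thm:lambdaf}(ii): $\Lambda(f,s)$ is meromorphic on all of $\C$ with at most simple poles, and these poles occur only at $s=0$ or at integers $n$ with $k-p\le n\le k$. Since $\frac{(2\pi)^s}{\Gamma(s)}$ is holomorphic on $\C$, the product $L(f,s)$ is automatically meromorphic on $\C$, and its pole set is contained in the pole set of $\Lambda(f,s)$; moreover each remaining pole stays simple because multiplication by a locally holomorphic function cannot raise the order of a pole.

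Then I would run through the candidate poles. At $s=0$, the simple zero of $1/\Gamma(s)$ cancels the at most simple pole of $\Lambda(f,s)$, so $L(f,s)$ is holomorphic at $0$. In the same way, for every non-positive integer $n$ with $k-p\le n\le 0$ the first-order zero of $1/\Gamma(s)$ at $n$ cancels the at most simple pole of $\Lambda(f,s)$ there. What survives is precisely the set of positive integers $n$ lying in the range $k-p\le n\le k$; at such a point $\frac{(2\pi)^s}{\Gamma(s)}$ is holomorphic and non-zero, so $L(f,s)$ has at worst a simple pole (present exactly when the residue of $\Lambda(f,s)$ from Theorem~\ref{thm:lambdaf}(ii) is non-zero). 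In particular, when $k\le 0$ the range contains no positive integers and $L(f,s)$ is entire.

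There is no genuine obstacle here; the only point requiring a moment's care is the regime $k-p\le 0$, where one must check that the zeros of $1/\Gamma(s)$ sit exactly on the non-positive members of $\{k-p,\dots,k\}$ and that, because all poles of $\Lambda(f,s)$ are at most simple, first-order zeros are enough to remove them. I would close by reconciling the statement with Theorem~\ref{thm:lambdaf}(ii): the list there, "either $s=0$ or all integers in $k-p\le s\le k$", collapses after division by $\Gamma(s)$ to "positive integers in $k-p\le s\le k$", which is exactly the assertion of the proposition.
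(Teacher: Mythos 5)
Your proposal is correct and follows exactly the same route as the paper: the paper's own proof is the one-line observation that $\Gamma(s)$ has simple poles at the non-positive integers, so the conclusion follows directly from Theorem~\ref{thm:lambdaf}. You have simply spelled out the cancellation bookkeeping in more detail, which is fine.
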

\begin{proof}
We note that $\Gamma(s)$ has a simple pole at nonpositive integers, so this proposition follows directly from Theorem \ref{thm:lambdaf}.
\end{proof}

The operator $D$ acts on the Fourier expansion of holomorphic quasi-modular form $f$ by
$$D=q\frac{d}{dq}:\sum_{n\geq 0} a_f(n)q^n\mapsto \sum_{n\geq 0} na_f(n)q^n.$$
This implies that the Dirichlet $L$-series of $Df$ is exactly the the shift of the original Dirichlet $L$-series of $f$. Actually, for meromorphic quasi-modular form, we can obtain the same result.
\begin{thm}\label{thm:shiftl}
Let $f$ be a meromorphic quasi-modular form. Then we have 
\[\Lambda(D^l f,s)=\frac{(s-l)_l}{(2\pi)^l}\Lambda(f,s-l),\]
and 
\[L(D^lf,s)=L(f,s-l).\]
\end{thm}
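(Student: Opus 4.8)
The plan is to reduce the statement to the case $l=1$ and then proceed by induction, working directly from the integral definition of $\Lambda$ and a single integration by parts. First I would observe that since $D^l f = D(D^{l-1}f)$ and $D^{l-1}f$ is again a meromorphic quasi-modular form (by Proposition~\ref{prop:depthd}), it suffices to establish $\Lambda(Df,s) = \frac{s-1}{2\pi}\Lambda(f,s-1)$; iterating this $l$ times and telescoping the Pochhammer factors $(s-l)(s-l+1)\cdots(s-1) = (s-l)_l$ yields the general formula, and the corresponding identity for $L$ follows since $L(D^lf,s) = \frac{(2\pi)^s}{\Gamma(s)}\Lambda(D^lf,s) = \frac{(2\pi)^s}{\Gamma(s)}\cdot\frac{(s-l)_l}{(2\pi)^l}\Lambda(f,s-l)$ and $\frac{(s-l)_l}{(2\pi)^l}\cdot\frac{(2\pi)^s}{\Gamma(s)} = \frac{(2\pi)^{s-l}}{\Gamma(s-l)}$ because $\Gamma(s) = (s-l)_l\,\Gamma(s-l)$.

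Next I would prove the $l=1$ case. Writing $D f(it) = \frac{1}{2\pi i}\frac{d}{d\tau}f(\tau)\big|_{\tau=it} = \frac{1}{2\pi}\frac{d}{dt}f(it)$, we have formally
\[
\Lambda(Df,s) = \int_0^{\infty,\ast} Df(it)\,t^{s-1}\,dt = \frac{1}{2\pi}\int_0^{\infty,\ast}\frac{d}{dt}\big(f(it)\big)\,t^{s-1}\,dt.
\]
Integration by parts gives $\frac{1}{2\pi}\big([f(it)t^{s-1}] - (s-1)\int_0^{\infty,\ast} f(it)t^{s-2}\,dt\big) = -\frac{s-1}{2\pi}\Lambda(f,s-1)$, provided the boundary terms at $0$ and $\infty$ vanish and integration by parts is legitimate for the regularized integral. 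So the heart of the argument is to justify integration by parts for $\int_0^{\infty,\ast}$: one must check it is compatible with both the exponential regularization at $0,\infty$ (Definition~\ref{defn:infinity}) and the Hadamard regularization at the finitely many positive real poles (Definition~\ref{def:regint}). For the latter, the finite-part integral $\big[\int |t-c|^\sigma g(t)\,dt\big]_{\sigma=0}$ obeys the usual product rule in $g$ up to the correction term $\big[\int \sigma|t-c|^{\sigma-1}\operatorname{sgn}(t-c)\,g\,dt\big]_{\sigma=0}$, and since $f(it)t^{s-1}$ has finitely many poles the total contribution of these corrections telescopes across adjacent intervals and cancels; equivalently, using the Cauchy principal value description from Proposition~\ref{prop:positivereals}(ii), integration by parts is manifestly valid. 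For the exponential regularization, inserting the factor $e^{-wt}$ and using $\frac{d}{dt}(e^{-wt}t^{s-1}) = e^{-wt}(-w t^{s-1} + (s-1)t^{s-2})$, the extra $-w$ term vanishes in the limit $w\to 0$, while the boundary terms vanish because $f(it)t^{s-1}e^{-wt}\to 0$ as $t\to\infty$ (linear exponential growth killed by $e^{-wt}$, $\operatorname{Re} w > 0$) and, after the $t\mapsto 1/t$ reflection, similarly at $t=0$; a small subtlety near $t=0$ is that the reflection turns the boundary term there into a condition on $f(i/t)t^{-s+1}$, which by the transformation formula~\eqref{eq:fit} is again of linear exponential type, so the same argument applies.

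The main obstacle I expect is precisely this bookkeeping: making integration by parts rigorous simultaneously for the two distinct regularization mechanisms, and confirming that no anomalous boundary or finite-part terms survive. An alternative, perhaps cleaner route — which I would present as a remark or use as a cross-check — is to apply the explicit formula of Theorem~\ref{thm:lfunction} together with the formula $\Lambda(D^lf,s)$ expressed via the Fourier coefficients $a_{Df}(n) = n\,a_f(n)$ and the polylogarithm contributions: under $D$, the principal-part coefficients transform so that $P_\alpha(Df)$ relates to $\frac{d}{d\tau}P_\alpha(f)$, and one checks term by term that each incomplete-gamma and each $G_m$-contribution in Theorem~\ref{thm:lfunction} picks up exactly the factor $(s-1)/(2\pi)$ with argument shifted $s\mapsto s-1$; iterating gives $(s-l)_l/(2\pi)^l$. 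Either way the computation is routine once the regularization is under control, so the write-up should emphasize the integration-by-parts justification and relegate the telescoping of Pochhammer symbols and the $\Gamma$-function identity to a line of algebra.
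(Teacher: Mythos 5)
Your approach is essentially the paper's: reduce to $l=1$, integrate by parts against the exponential regulator at $\infty$ (and, after $t\mapsto 1/t$, at $0$) and against the Hadamard/principal-value regularization at the positive real poles, check that the interior boundary terms cancel across adjacent intervals, and then telescope the Pochhammer symbol and use $\Gamma(s)=(s-l)_l\,\Gamma(s-l)$ for the $L$-function identity; the paper even records the same alternative cross-check via Lemma~\ref{lem:hol} and the recurrence~\eqref{eq:recurrence}. The one slip is a sign in the conversion from $D$ to $d/dt$: since $\frac{d}{dt}f(it)=if'(it)$, one has $(Df)(it)=-\frac{1}{2\pi}\frac{d}{dt}\bigl(f(it)\bigr)$, not $+\frac{1}{2\pi}\frac{d}{dt}\bigl(f(it)\bigr)$; with your sign the $l=1$ computation lands on $-\frac{s-1}{2\pi}\Lambda(f,s-1)$, contradicting the identity $\Lambda(Df,s)=\frac{s-1}{2\pi}\Lambda(f,s-1)$ you set out to prove. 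Correcting that sign makes the argument agree with the paper's.
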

\begin{proof}
The above identities are nothing but integration by parts. Evidently it is enough for us to prove the case $l=1$. With integration by parts we have
\begin{align*}
    &\int_{t_0}^{\infty}(Df)(it)e^{-wt}t^{s-1}dt\\
    =&-\frac{1}{2\pi}f(it)e^{-wt}t^{s-1}\big|_{t_0}^{\infty}-\frac{1}{2\pi i}\int_{t_0}^{\infty}iwe^{-wt}f(it)t^{s-1}+\frac{s-1}{i}e^{-wt}f(it)t^{s-2}dt.
\end{align*}
When $w$ large enough, the first term equals to $\frac{1}{2\pi}f(it_0)t_0^{s-1}e^{-wt_0}$. Clearly, it has a holomorphic continuation to the whole plane in $w$ and its value at $w=0$ is just $\frac{1}{2\pi}f(it_0)t_0^{s-1}$. For the integral, by Lemma \ref{lem:hol}, it has a holomorphic continuation to a neighbourhood of $w=0$. So we get
\[\int_{t_0}^{\infty,\ast}(Df)(it)t^{s-1}dt=\frac{1}{2\pi}f(it_0)t_0^{s-1}+\frac{s-1}{2\pi}\int_{t_0}^{\infty,\ast}f(it)t^{s-2}dt.\]
Another way to see this is using the precise formula in Lemma~\ref{lem:hol} and the recurrence relation~\eqref{eq:recurrence}. We can deal with regularized integrals at positive real poles and $0$ in the same way with integration by parts. At last we get 
\[\int_0^{\infty,\ast}(Df)(it)t^{s-1}dt=\frac{s-1}{2\pi}\int_{0}^{\infty,\ast}f(it)t^{s-2}dt.\]
This gives the identity for $\Lambda(f,s)$. The identity for $L(f,s)$ then follows directly after $\Gamma(z+1)=z\Gamma(z)$. 
\end{proof}

If $f$ is a meromorphic modular form, the formula of its $L$-function is much simpler. 
\begin{cor}\label{cor:funeq}
Let $f\in\mcm_k^{\mero}$ be a meromorphic modular form of weight $k$. Then the $L$-function of $f$ is 
\begin{align*}
    \Lambda(f,s)=&-\tilde{a}_f(0)\left(\frac{t_0^s}{s}+\frac{i^kt_0^{s-k}}{k-s}\right)+I(f,s,t_0)+i^kI(f,k-s,t_0^{-1})\\
    +&\sum_{n\neq 0}\tilde{a}_f(n)\left(\frac{\Gamma(s,2\pi n t_0)}{(2\pi n)^s}+\frac{i^k\Gamma(k-s,2\pi n/t_0)}{(2\pi n)^{k-s}}\right)
\end{align*}
Moreover, it satisfies the following functional equation
\[\Lambda(f,s)=i^k\Lambda(f,k-s).\]
\end{cor}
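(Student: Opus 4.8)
The plan is to obtain both assertions by specializing the general results already proved for meromorphic quasi-modular forms to depth $0$. For the explicit formula, I would apply Theorem~\ref{thm:lfunction} with $p=0$: a meromorphic modular form is precisely a meromorphic quasi-modular form of depth $0$, so beyond $f_0=f$ there are no component functions, and the sum $\sum_{r=1}^{p}(\cdots)$ in Theorem~\ref{thm:lfunction} is empty. What survives is exactly the displayed expression for $\Lambda(f,s)$, namely the two elementary terms in $\tilde a_f(0)$, the pair of incomplete-gamma sums over $n\neq 0$, and $I(f,s,t_0)+i^kI(f,k-s,t_0^{-1})$. The meromorphic continuation is then automatic, since $I(f,s,t_0)$ and $I(f,k-s,t_0^{-1})$ are entire in $s$ by Lemma~\ref{lem:mmsat} (this is not part of the statement but worth noting).

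For the functional equation, the cleanest route is to invoke Theorem~\ref{thm:lambdaf}(i), whose functional equation $\Lambda(f,s)=\sum_{r=0}^{p}i^{k-r}\Lambda(f_r,k-r-s)$ collapses, when $p=0$, to the single term $i^k\Lambda(f_0,k-s)=i^k\Lambda(f,k-s)$. So this half of the corollary is immediate.

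As a self-contained consistency check one can also read the functional equation off the explicit formula directly. Since the right-hand side of Theorem~\ref{thm:lfunction} is independent of the auxiliary parameter $t_0$, I would compare the formula for $\Lambda(f,s)$ with that for $i^k\Lambda(f,k-s)$ after the simultaneous substitution $s\mapsto k-s$, $t_0\mapsto t_0^{-1}$: the two terms $-\tilde a_f(0)t_0^s/s$ and $-\tilde a_f(0)i^kt_0^{s-k}/(k-s)$ are interchanged up to the factor $i^k$ (here one uses $k\in 2\Z$, so $i^{2k}=1$; for odd $k$ the space of meromorphic modular forms is trivial), the two incomplete-gamma sums over $n\neq 0$ are likewise interchanged up to $i^k$, and the pair $I(f,s,t_0)$, $i^kI(f,k-s,t_0^{-1})$ is visibly symmetric under the same substitution up to $i^k$. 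There is essentially no obstacle; the only points needing a line of care are checking that the $\Gamma$-factor bookkeeping in $I(f,s,t_0)$ coming from Lemma~\ref{lem:mmsat} is the same object appearing in Theorem~\ref{thm:lfunction}, and, for the direct route, that evenness of $k$ is exactly what makes the powers of $i$ match.
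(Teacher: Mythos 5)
Your proposal is correct and matches the paper's (implicit) argument: the paper states this corollary without proof precisely because it is the $p=0$ specialization of Theorem~\ref{thm:lfunction} (empty sum over $r\geq 1$) and of the functional equation in Theorem~\ref{thm:lambdaf}(i) (single term $r=0$ with $f_0=f$). Your added consistency check via the $s\mapsto k-s$, $t_0\mapsto t_0^{-1}$ symmetry and the evenness of $k$ is sound but not needed.
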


\begin{rmk}
In particular, when $f\in S_k^{!}$ is a weakly holomorphic cusp form, we obtain
$$\Lambda(f,s)=\sum_{n\neq 0}a_f(n)\left(\frac{\Gamma(s,2\pi n t_0)}{(2\pi n)^s}+\frac{i^k\Gamma(k-s,2\pi n/t_0)}{(2\pi n)^{k-s}}\right).$$
This computation coincides with Theorem $2.2$ in \cite{BFK14}.
\end{rmk}

\section{Vanishing $L$-values of meromorphic quasi-modular forms}\label{sec:9}
In this section, we give some vanishing results of certain special $L$-values of meromorphic quasi-modular forms.
\begin{prop}\label{prop:lfs0}
Let $f\in \qm_k^{\mero,\,p}$ be a meromorphic quasi-modular form of weight $k$. Let $s$ be a negative integer, then
\begin{enumerate}[label=(\roman*)]
    \item If $k\leq 0$, then the Dirichlet $L$-function $L(f,s)$ is always entire in $s$. Moreover, when either $s<k-p$ or $k<s<0$,  we have 
    $$L(f,s)=0.$$
    \item If $k\geq 2$, when $s<k-p$, we have
    $$L(f,s)=0.$$
\end{enumerate}
\end{prop}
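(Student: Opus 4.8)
The plan is to read everything off the factorization $L(f,s)=\frac{(2\pi)^s}{\Gamma(s)}\Lambda(f,s)$ together with the description of the poles of $\Lambda(f,s)$ obtained in Theorem~\ref{thm:lambdaf}(ii). Two elementary facts drive the argument: that $\Gamma(s)$ has a simple pole at every non-positive integer, so $1/\Gamma(s)$ vanishes there; and that the only poles of $\Lambda(f,s)$ are simple and located at $s=0$ or at integers $n$ with $k-p\leq n\leq k$. Consequently, at a negative integer $s=n$ one has $L(f,n)=(2\pi)^n\Lambda(f,n)/\Gamma(n)$, which is forced to be $0$ the moment $n$ avoids the pole set of $\Lambda$, that is, whenever $n\neq 0$ and $n\notin[k-p,k]$.

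For part (i), with $k\leq 0$, I would first invoke Proposition~\ref{prop:polesofL}: the only possible poles of $L(f,s)$ sit at positive integers in $[k-p,k]$, and that interval contains no positive integer when $k\leq 0$, so $L(f,s)$ is entire. For the vanishing, let $s=n$ be a negative integer with $n<k-p$ or $k<n<0$; in either range $n\neq 0$ and $n\notin[k-p,k]$, hence $\Lambda(f,s)$ is holomorphic at $s=n$, and $L(f,n)=0$ because $1/\Gamma(n)=0$. Part (ii), with $k\geq 2$, is the same bookkeeping: for a negative integer $s=n$ with $n<k-p$ we again have $n\neq 0$ and $n\notin[k-p,k]$, so $\Lambda(f,s)$ is holomorphic at $n$ and $L(f,n)=0$.

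I do not expect any genuine obstacle: all of the analytic content lives in Theorems~\ref{thm:lfunction} and~\ref{thm:lambdaf}, and the present statement is a formal consequence of comparing the pole set of $\Lambda$ with the zero set of $1/\Gamma$. The only point deserving a little care is the elementary case check that the prescribed negative integers really do fall outside $[k-p,k]\cup\{0\}$, which is immediate from the hypotheses $n<k-p$ or $k<n<0$.
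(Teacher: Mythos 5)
Your proposal is correct and follows essentially the same route as the paper: both arguments compare the pole set of $\Lambda(f,s)$ given by Theorem~\ref{thm:lambdaf}(ii) with the zeros of $1/\Gamma(s)$ at non-positive integers, using Proposition~\ref{prop:polesofL} (or the underlying cancellation of simple poles) for the entirety claim in part~(i). The case checks you flag at the end are exactly the content of the paper's proof.
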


\begin{proof}
If $k\leq 0$, the function $\Lambda(f,s)$ has only possibly simple poles at $k-p\leq s\leq k\leq 0$ by Theorem~\ref{thm:lambdaf}. Meanwhile, $\Gamma(s)$ also has a simple pole at negative integers. So the Dirichlet $L$-function $L(f,s)$ is entire and $L(f,s)=0$ whenever $s<k-p$ or $k<s<0$.

If $k\geq 2$, the function $\Lambda(f,s)$ has no poles for $s<k-p$ but $\Gamma(s)$ has pole at negative integer $s$, so $L(f,s)$ always vanishes if $s<k-p$.
\end{proof}

\begin{cor}
Let $f\in \qm_k^{\mero,\,p}$ be a meromorphic quasi-modular form of weight $k\leq 0$. If $s$ is a positive integer with $2\leq s\leq \abs{k}$, then we have 
\[L(D^{1-k}f,s)=0.\]
\end{cor}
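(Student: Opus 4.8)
The plan is to deduce the identity from two results already established in the paper: the shift formula Theorem~\ref{thm:shiftl} and the vanishing Proposition~\ref{prop:lfs0}. The whole point will be that iterating $D$ shifts the argument of the $L$-function, landing us precisely in the range where Proposition~\ref{prop:lfs0}(i) forces vanishing.

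\emph{Step one.} Since $k \leq 0$ we have $1-k \geq 1$, so $D^{1-k}f$ is a genuine iterated derivative lying in $\qm^{\mero}$ (of weight $2-k \geq 2$). Applying Theorem~\ref{thm:shiftl} with $l = 1-k$ gives
\[
L(D^{1-k}f,s) = L\bigl(f,\, s-(1-k)\bigr) = L(f,\, s+k-1).
\]
Thus it suffices to show $L(f,\, s+k-1) = 0$.

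\emph{Step two.} I would track the argument $m := s+k-1$. As $k \leq 0$, the hypothesis $2 \leq s \leq |k|$ reads $2 \leq s \leq -k$, whence $k+1 \leq m \leq -1$. In particular $m$ is a negative integer lying strictly between $k$ and $0$. Since $f$ has weight $k \leq 0$, Proposition~\ref{prop:lfs0}(i) says that $L(f,\cdot)$ is entire and vanishes at every negative integer in the open interval $(k,0)$; applying it at the negative integer $m$ yields $L(f,m) = 0$, and therefore $L(D^{1-k}f,s) = 0$.

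There is no serious obstacle here — the corollary is essentially a bookkeeping exercise combining the two cited results. The one point requiring care is the arithmetic of the inequalities: one must verify that $m$ lands strictly inside $(k,0)$, avoiding both the endpoint $m = k$ (where $L(f,m)$ need not vanish, since a possible pole of $\Lambda(f,m)$ can be cancelled by the pole of $\Gamma(m)$) and the other vanishing regime $m < k-p$. The bound $s \geq 2$ forces $m \geq k+1 > k$, while $p \geq 0$ gives $k-p \leq k < m$, so neither difficulty arises and the proof is complete.
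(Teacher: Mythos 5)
Your proposal is correct and is essentially identical to the paper's own (very terse) proof: both apply Theorem~\ref{thm:shiftl} with $l=1-k$ to get $L(D^{1-k}f,s)=L(f,s+k-1)$ and then invoke Proposition~\ref{prop:lfs0}(i) at the negative integer $s+k-1\in(k,0)$. Your explicit verification of the inequalities $k<s+k-1<0$ is the only bookkeeping the paper leaves implicit, and it checks out.
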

\begin{proof}
By Theorem \ref{thm:shiftl} and Proposition \ref{prop:lfs0}, when $2\leq s\leq -k$, we have
\[
L(D^{1-k}f,s)=L(f,s+k-1)=0.
\qedhere
\]
\end{proof}

\begin{rmk}
This corollary shows that some periods of the meromorphic quasi-modular form $D^{1-k}f$ vanish. When $f$ is a weakly holomorphic modular form, this recovers Theorem $2.5$ in \cite{BFK14}.
\end{rmk}

\begin{prop}
The period of a meromorphic modular form of weight $2$ is always vanishing, i.e. for any meromorphic modular form $f\in \mcm_2^{\mero}$, 
\[L(f,1)=\Lambda(f,1)=0.\]
\end{prop}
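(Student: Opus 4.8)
The plan is to deduce the vanishing from the functional equation of $\Lambda(f,s)$ together with the location of its poles, both of which are already available. Since $f\in\mcm_2^{\mero}$ has weight $k=2$ and depth $p=0$, Corollary~\ref{cor:funeq} gives the functional equation
\[
\Lambda(f,s)=i^{2}\Lambda(f,2-s)=-\Lambda(f,2-s),
\]
so that $s=1$ is the fixed point of the reflection $s\mapsto 2-s$, and the functional equation relates $\Lambda(f,1)$ to its own negative.

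Before evaluating at $s=1$ I would check that $\Lambda(f,s)$ is actually holomorphic there, since otherwise the identity $\Lambda(f,1)=-\Lambda(f,1)$ would be vacuous. This is immediate: by Theorem~\ref{thm:lambdaf}(ii) with $p=0$, the complete $L$-function $\Lambda(f,s)$ has at most simple poles at $s=0$ and at the integers in the range $k-p\le s\le k$, i.e.\ only at $s=0$ and $s=2$. Alternatively one reads this off the explicit formula in Corollary~\ref{cor:funeq}: the terms $I(f,s,t_0)$, $I(f,k-s,t_0^{-1})$ and the incomplete-gamma series are entire in $s$, leaving only the elementary poles of $t_0^{s}/s$ at $s=0$ and of $i^{k}t_0^{s-k}/(k-s)$ at $s=2$. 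Hence $\Lambda(f,1)$ is a finite complex number.

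It then remains only to put $s=1$ in the functional equation, giving $\Lambda(f,1)=-\Lambda(f,1)$ and therefore $\Lambda(f,1)=0$; and to convert this to the Dirichlet $L$-value via
\[
L(f,1)=\frac{(2\pi)^{1}}{\Gamma(1)}\Lambda(f,1)=2\pi\,\Lambda(f,1)=0,
\]
using $\Gamma(1)=1$. There is no genuinely hard step here: the only point one must not overlook is the holomorphy of $\Lambda(f,s)$ at the central point $s=1$, which is precisely what turns the sign-reversing functional equation into an actual vanishing statement rather than a tautology.
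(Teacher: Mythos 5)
Your proof is correct and follows exactly the paper's argument: apply the functional equation $\Lambda(f,s)=i^{2}\Lambda(f,2-s)$ from Corollary~\ref{cor:funeq} at the central point $s=1$ to get $\Lambda(f,1)=-\Lambda(f,1)$. The extra verification that $\Lambda(f,s)$ is holomorphic at $s=1$ (poles only at $s=0$ and $s=2$) is a worthwhile precaution that the paper leaves implicit, but the route is the same.
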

\begin{proof}
Applying the functional equation in Corollary~\ref{cor:funeq} at weight $2$, we get
\[\Lambda(f,1)=-\Lambda(f,1).\]
So the period $L(f,1)=\Lambda(f,1)=0$.
\end{proof}

\appendix
\section{Special functions}
\subsection{Incomplete Gamma functions} We give some basic properties of incomplete gamma functions. Let $s,z\in \mbc$ with $\RE (s)>0$ and $\abs{\arg(z)}<\pi$, the upper gamma function is defined by 
\[\Gamma(s,z)=\int_z^{\infty}t^{s-1}e^{-t}dt,\]
whereas the lower incomplete gamma function is defined by 
\[\gamma(s,z)=\int_0^{z}t^{s-1}e^{-t}dt.\]
The two functions are connected by 
$$\Gamma(s)=\Gamma(s,z)+\gamma(s,z).$$
Integration by parts yields the following recurrence relation for incomplete gamma function
\begin{equation}\label{eq:recurrence}
   \Gamma(s+1,z)=s\,\Gamma(s,z)+z^{s}\,e^{-z}.
\end{equation}
Both incomplete gamma functions can be extended with respect to both $s$ and $z$. The function $\Gamma(s,z)$ extends to a multi-valued function in $z$ with a branch point at $z=0$, and is holomorphic in each sector. If $z\neq 0$, then $\Gamma(s,z)$ is always an entire function in $s$.

For $(s,z)\in\C^2$, the incomplete gamma function has exponential growth asymptotic expansion $$\Gamma(s,z)\sim z^{s-1}e^{-z}\left(1+\f{s-1}{z}+\f{(s-1)(s-2)}{z^2}+\dots\right)$$
as $|z|\to \infty$ (see~\cite[Section 8.11 (i)]{NIST}).

\subsection{Hurwitz zeta function} Let $s,a\in\mbc$ with $\RE(s)>1$ and $a\neq 0,-1,-2,\cdots$. The Hurwitz function $\zeta(s,a)$ is defined by series expansion
\[\zeta(s,a)=\sum_{n=0}^{\infty}\frac{1}{(n+a)^s}.\] The function
$\zeta(s,a)$ has a meromorphic continuation to the whole $s$-plane. It has only a simple pole at $s=1$ with residue $1$.

\subsection{Polylogarithm} Let $s,z\in \mbc$ with $|z|<1$. The polylogarithm function is defined to be 
\[\Li_s(z)=\sum_{k=1}^{\infty}\frac{z^k}{k^s}.\]
It has an analytic continuation to a multi-valued holomorphic function in $z$ and an entire function in $s$. If $s$ is not a nonnegative integer, then the principle branch $\Li_{s}(z)$ is holomorphic in $z$ with branch cut along $[1,\infty)$.

When $n\in \mbn$, the function $\Li_{-n}(z)$ is a rational function in $z$ and has pole only at $z=1$. More precisely, we have 
\[\Li_{-n}(z)=\left(z\frac{\partial}{\partial z}\right)^n\frac{z}{1-z}.\]
Let $m\in\mbn$ be a positive integer, then the Laurent series of $\Li_{1-m}(\be(z))$ at $z=0$ is given by 
\begin{equation}\label{eq:limz}
    \Li_{1-m}(\be(z))=\frac{(m-1)!}{(-2\pi i z)^m}+\sum_{k=0}^\infty \frac{\zeta(1-m-k)}{k!}(2\pi i z)^k.
\end{equation}

The polylogarithm is related to the Hurwitz zeta function by
\[\Li_s(z)=\frac{\Gamma(1-s)}{(2\pi)^{1-s}}\left(i^{1-s}\zeta\left(1-s,\frac{1}{2}+\frac{\ln(-z)}{2\pi i}\right)+i^{s-1}\zeta\left(1-s,\frac{1}{2}-\frac{\ln(-z)}{2\pi i}\right)\right).\]
When $z\notin (0,1]$, we have the reflection formula of polylogarithm
\begin{equation}\label{eq:reflcli}
    \Li_s(z)+\Li_s(1/z)=\frac{(2\pi i)^s}{\Gamma(s)}\zeta\left(1-s,\,\frac{1}{2}+\frac{\ln(-z)}{2\pi i}\right).
\end{equation}

\section*{Acknowledgements}
The research of the second author was supported by Fundamental Research Funds for the Central Universities (Grant No. 531118010622).




\begin{thebibliography}{99}
\bibitem{BD19} A. Bhand, K. DeoShankhadhar, \emph{On Dirichlet series attached to quasimodular forms}, Journal of Number Theory (2019), Volume 202, pp. 91-106.
\bibitem{BDE17}K. Bringmann, N. Diamantis, S. Ehlen, \emph{Regularized inner products and errors of modularity}, International Mathematics Research Notices (2017), Issue 24, pp. 7420-7458.
\bibitem{BFK14} K. Bringmann, K. Fricke, Z. A. Kent, \emph{Special L-values and periods of weakly holomorphic modular forms}, Proceedings of the American Mathematical Society 142 (2014), pp. 3425-3439.
\bibitem{Coh75}H. Cohen, \emph{Sums involving the values at negative integers of L-functions of quadratic characters}, Mathematische Annalen (1975), Volume 217 No. 3, pp. 271–285.
\bibitem{CS179}H. Cohen, F. Str{\"o}mberg, \emph{Modular Forms: A Classical Approach}, Graduate Studies in Mathematics 179 (2017).
\bibitem{Gra13}A. M. El Gradechi, \emph{The Lie theory of certain modular form and arithmetic identities}, Ramanujan Journal 31, pp. 397–433.
\bibitem{Fox}C. Fox, \emph{A Generalization of the Cauchy Principal Value}, Canadian Journal of Mathematics (1957), Volume 9, pp. 110-117.
\bibitem{GS62}I. M. Gel'fand, G. E. Shilov, \emph{Generalized functions}, Volume 1, Moscow Fismatgiz (1962), [English translation, Academic Press (1964)].
\bibitem{Had32}J. Hadamard, \emph{Le probl{\`e}me de Cauchy et les {\'e}quations aux d{\'e}riv{\'e}es partielles lin{\'e}aires hyperboliques} (in French), Paris: Hermann \& Cie. (1932), p. 542
\bibitem{K88} A. Kaneko, \emph{Introduction to hyperfunctions},	Mathematics and its applications, Kluwer Academic Publishers (1988).
\bibitem{K98} Ram P. Kanwal, \emph{Generalized Functions: Theory and Technique}, Birkh{\"a}user Boston (1998).
\bibitem{KZ95}M. Kaneko, D. Zagier, \emph{A generalized Jacobi theta function and quasimodular forms, in: The Moduli Space of Curves}, Texel Island, 1994, in: Progr. Math., vol.129, Birkhäuser Boston, Boston, MA, 1995, pp.165–172.
\bibitem{Kuz75}N. V. Kuznetsov, \emph{A new class of identities for the Fourier coefficients of modular forms} (in Russian), Collection of articles in memory of Juri{\v \i} Vladimirovi{\v c} Linnik, Acta Arithmetica 27 (1975), pp. 505–519.
\bibitem{Lan08}D. Lanphier, \emph{Combinatorics of Maass–Shimura operators}, Journal of Number Theory (2008), Volume 128, Issue 8 , pp. 2467-2487.
\bibitem{LZ01}J. Lewis, D. Zagier, \emph{Period functions for Maass wave forms. I}, Annals of Mathematics (2001),  Volume 153, Issue 1, pp. 191-258.
\bibitem{LS22}S. L\"obrich, M. Schwagenscheidt, \emph{Locally harmonic maass forms and periods of meromorphic modular forms}, Transactions of the American Mathematical Society 375 (2022), 501-524.
\bibitem{Mc19}D. A. McGady, \emph{L-functions for Meromorphic Modular Forms and Sum Rules in Conformal Field Theory}, Journal of High Energy Physics (2019).
\bibitem{NIST} F. W. J. Olver et al., eds. \textit{NIST Handbook of Mathematical Functions},  Cambridge University Press (2010), pp. 173-191.
\bibitem{PZ20}V. Paşol, W. Zudilin, \emph{Magnetic (quasi-)modular forms}, \arxiv{2009.14609} (2020).
\bibitem{Ra38}H. Rademacher and H. S. Zuckerman, \emph{On the Fourier coeﬃcients of certain modular forms of positive dimension}, Annals of Mathematics (2) 39 (1938), No. 2, pp. 433–462.
\bibitem{Riesz38} M. Riesz, \emph{Int{\'e}grales de Riemann-Liouville et potentiels} (in French), Acta Scientiarum Mathematicarum 9 (1938), pp. 1–42.
\bibitem{Riesz49} M. Riesz, \emph{L'int{\'e}grale de Riemann-Liouville et le probl{\`e}me de Cauchy} (in French),  Acta Mathematica 81(1949), pp. 1–223.
\bibitem{Sato59} M. Sato, \emph{Theory of Hyperfunctions}, Journal of the Faculty of Science, University of Tokyo, Section 1, Mathematics, Astronomy, Physics, Chemistry (1959).
\bibitem{Zagier08}D. Zagier, \emph{Elliptic Modular Forms and Their Applications, in The 1-2-3 of Modular Forms}, Springer-Verlag (2008), pp. 181–245.
\end{thebibliography}
\end{document}